\documentclass[10pt]{amsart}

\usepackage{macros}
\standardsettings
%\colorcommentstrue

\setcounter{tocdepth}{1}

\newcommand{\xqedhere}[2]{\rlap{\hbox to#1{\hfil\llap{\ensuremath{#2}}}}}

\newcommand{\firstgreek}{\mu}
\newcommand{\secondgreek}{\nu}
\newcommand{\pgreek}{\firstgreek}
\newcommand{\qgreek}{\secondgreek}
\newcommand{\greeks}{{\firstgreek,\secondgreek}}
\newcommand{\Vgreeks}{V_\firstgreek V_\secondgreek}

\newcommand{\firstdim}{m}
\newcommand{\seconddim}{n}
\newcommand{\pdim}{\firstdim}
\newcommand{\qdim}{\seconddim}
\newcommand{\dims}{{\firstdim,\seconddim}}
\newcommand{\dimsum}{{\firstdim + \seconddim}}
\newcommand{\dimprod}{{\firstdim \seconddim}}
\newcommand{\Dimsum}{d}
\newcommand{\Dimprod}{D}
\newcommand{\generic}{d}

\newcommand{\constants}{{\dims,\greeks}}

\newcommand{\dnorm}{\infty}
\newcommand{\pnorm}{\dnorm}
\newcommand{\qnorm}{\dnorm}
\newcommand{\norms}{{\pnorm,\qnorm}}
\newcommand{\onenorm}{1}

\newcommand{\diamcube}{\|\KK\|}
\renewcommand{\Codiam}[1]{\|#1\|}
\newcommand{\cdl}{{\Codiam{\Lambda}}}% previously $\Dimprod_0$
\newcommand{\cdd}{{\|\DD\|}}% in a temporary situation where $\cdd$ is not a function of $\Lambda$

\DeclareMathOperator{\Irr}{Irr}
\DeclareMathOperator{\boxc}{N}
\newcommand{\ballsymbol}{B}
\newcommand{\ball}[2]{\ballsymbol_{#2}(#1)}% metric thickenings
% open version
\newcommand{\ballvar}[2]{\ballsymbol(#1,#2)}% if the thickening parameter is complicated
\newcommand{\multerror}{\gamma}
\newcommand{\bigast}{\mathop{\text{\scalebox{2}{$\ast$}}}}
\newcommand{\Approx}[1]{W(#1)}
\newcommand{\Approxmod}[2]{W_{#2}(#1)}
\newcommand{\Bad}[1]{B(#1)}
\newcommand{\Badmod}[2]{B_{#2}(#1)}

\newcommand{\Wedge}{\wedge}% wedge product
\newcommand{\eucq}{\approx}% for now

\newcommand{\minkowski}{\lambda_1}

\newcommand{\Lrep}{\Lambda_\prim^+}

\DeclareMathOperator{\lebesgue}{\lambda}
\DeclareMathOperator{\haarg}{\lambda_\GG}
\DeclareMathOperator{\haar}{\lambda_\Omega}

\theoremstyle{definition}
\newtheorem{quest}{Question}
\newtheorem{heur}{Heuristic assumption}

\renewcommand{\eqsim}[1]{\underset{#1}{\sim}}
\newcommand{\leqsim}[1]{\underset{#1}{\lesssim}}
\newcommand{\geqsim}[1]{\underset{#1}{\gtrsim}}
\newcommand{\LessLess}[1]{\underset{#1}{\lessless}}
\newcommand{\Plussim}[1]{\underset{#1}{\sim_\plus}}

% Uncomment to ignore proofs
%\noproofs
% Uncomment to get out of draft mode
\draftfalse

\begin{document}
\title[A Hausdorff measure version of the Jarn\'ik--Schmidt theorem]{A Hausdorff measure version of the Jarn\'ik--Schmidt theorem in Diophantine approximation}

\authordavid

%\subjclass[2010]{Primary }
%\keywords{}
%\date{}
%\dedicatory{}

\begin{Abstract}
We solve the problem of giving sharp asymptotic bounds on the Hausdorff dimensions of certain sets of badly approximable matrices, thus improving results of Broderick and Kleinbock (preprint 2013) as well as Weil (preprint 2013), and generalizing to higher dimensions those of Kurzweil ('51) and Hensley ('92). In addition we use our technique to compute the Hausdorff $f$-measure of the set of matrices which are not $\psi$-approximable, given a dimension function $f$ and a function $\psi:(0,\infty)\to (0,\infty)$. This complements earlier work by Dickinson and Velani ('97) who found the Hausdorff $f$-measure of the set of matrices which are $\psi$-approximable.
\end{Abstract}

\clearpage
\maketitle

\ignore{
\draftnewpage
\section{Introduction - fast paced version}

\subsection{Superlevelsets of the Lagrange constant function}
Fix $\dims\in\N$, and let $\MM$ denote the set of $\pdim\times \qdim$ matrices. A matrix $A\in\MM$ is said to be \emph{badly approximable} if there exists $\kappa > 0$ such that
\begin{equation}
\label{kappadef}
\|A\qq - \pp\|^\pdim \|\qq\|^\qdim \geq \kappa \text{ for all }\qq\in\Z^\qdim\butnot\{\0\} \text{ and } \pp\in\Z^\pdim.
\end{equation}
The supremum of the set of $\kappa$ satisfying \eqref{kappadef} is called the \emph{Lagrange constant}\Footnote{The phrase ``Lagrange constant'' is also sometimes uzed to refer to the reciprocal of this number.} of $A$, and we denote it by $\LL(A)$. Note that the Lagrange constant of a matrix depends on the choice of norms used in the formula \eqref{kappadef}; if we use the norms $\|\cdot\|_\pgreek$ and $\|\cdot\|_\qgreek$ on $\R^\firstdim$ and $\R^\seconddim$, respectively, then we denote the corresponding Lagrange constant by $\LL_{\greeks}(A)$. Throughout this paper, $\|\cdot\|_\dnorm$ will denote the max norm on $\R^\generic$. In particular, $\|\cdot\|_\onenorm$ is just another notation for the absolute value function on $\R$.

Dirichlet's theorem in Diophantine approximation implies that $\LL_\norms(A) \leq 1$ for any matrix $A\in\MM$. It is complemented by the Jarn\'ik--Schmidt theorem, which says that the Hausdorff dimension of the badly approximable $\pdim\times\qdim$ matrices (i.e. those with positive Lagrange constant) is equal to $\dimprod$, the dimension of the space of all $\pdim\times\qdim$ matrices. In this paper, we consider the Hausdorff dimensions of the sets
\[
\Badmod\kappa\greeks \df \{A\in\MM : \LL_{\greeks}(A) \geq \kappa\} \;\;\; (\kappa > 0),
\]
i.e. the superlevelsets of the function sending a matrix to its Lagrange constant. Our main result concerning these sets is the following:
}% end ignore

\draftnewpage
\section{Introduction}
\label{sectionintroduction}

\subsection{Notation}

Fix $\dims\in\N$, and let $\MM$ denote the set of $\pdim\times \qdim$ matrices. Given a function $\psi:(0,\infty)\to(0,\infty)$ and two norms $\|\cdot\|_\firstgreek$, $\|\cdot\|_\secondgreek$ on $\R^\firstdim$ and $\R^\seconddim$ respectively, a matrix $A\in \MM$ is called \emph{$\psi$-approximable} (with respect to the norms $\|\cdot\|_\firstgreek$ and $\|\cdot\|_\secondgreek$) if there exist infinitely many vectors $\pp\in\Z^\pdim$ and $\qq\in\Z^\qdim\butnot\{\0\}$ such that
\[
\|A\qq - \pp\|_\pgreek \leq \psi(\|\qq\|_\qgreek).
\]
If $\|\cdot\|_\qgreek$ is the max norm, then we may allow $\psi:\N\to (0,\infty)$ in place of $\psi:(0,\infty)\to (0,\infty)$. We denote the set of $\psi$-approximable matrices by $\Approx\psi$, or by $\Approxmod\psi\greeks$ when we wish to emphasize the role of the norms $\|\cdot\|_\firstgreek$ and $\|\cdot\|_\secondgreek$.

\begin{remark*}
When $\qdim = 1$, then $\Approx\psi$ is the set of simultaneously $\psi$-approximable points in $\R^\pdim$, and when $\pdim = 1$, then $\Approx\psi$ is the set of dually $\psi$-approximable points in $\R^\qdim$. This convention agrees with most sources, but some swap the roles of $\firstdim$ and $\seconddim$ (e.g. \cite{BoveyDodson, DickinsonVelani, Weil1}).% agree with $n = 1$ being simultaneous: \cite{Schmidt2, KleinbockMargulis}
\end{remark*}

%\begin{remark*}
%The following reformulation turns out to be often useful: A matrix $A\in\MM$ is $(\psi,\greeks)$-approximable if and only if there exist infinitely many vectors $\qq\in\Z^\qdim\butnot\{\0\}$ and $\pp\in\Z^\pdim$ such that
%\[
%\|A\qq - \pp\|_\pgreek^\pdim \|\qq\|_\qgreek^\qdim \leq \phi(\|\qq\|_\qgreek),
%\]
%where
%\begin{equation}
%\label{phidef}
%\phi(q) = q^\qdim \psi^\pdim(q).
%\end{equation}
%\end{remark*}

\subsection{A brief history of Diophantine approximation}
Diophantine approximation is traditionally considered to have begun in 1842, when Dirichlet proved the following theorem:

\begin{theorem*}[Dirichlet's Theorem]
For any $A\in \MM$ and for any $Q > 1$, there exist $\pp\in\Z^\pdim$ and $\qq\in\Z^\qdim\butnot\{\0\}$ such that
\[
\|\qq\|_\qnorm \leq Q \text{ and } \|A\qq - \pp\|_\pnorm \leq Q^{-\qdim/\pdim},
\]
where $\|\cdot\|_\dnorm$ denotes the max norm. In particular, if
\[
\psi_\ast(q) = q^{-\qdim/\pdim}
\]
then
\begin{equation}
\label{corollarydirichlet}
\Approxmod{\psi_\ast}\norms = \MM.
\end{equation}
\end{theorem*}

We will refer to the equation \eqref{corollarydirichlet} as \emph{the corollary to Dirichlet's theorem}.

A natural question is whether Dirichlet's theorem or its corollary can be improved. The first result in this direction is due to Liouville, who showed in 1844 that the set of \emph{badly approximable matrices}
\[
\BA \df \bigcup_{\multerror > 0}\MM\butnot \Approx{\multerror\psi_\ast}
\]
is nonempty when $\firstdim = \seconddim = 1$. The assumption $\firstdim = \seconddim = 1$ was removed by O. Perron in 1921 \cite{Perron}.

Liouville and Perron's result shows that for any function $\psi$ satisfying $\psi/\psi_\ast\to 0$, we have $\Approx\psi \neq\MM$ (cf. \cite[Section 2]{FishmanSimmons5} for an extended discussion of this point). Nevertheless, for such a $\psi$ the set $\Approx\psi$ may still be ``large'' in a number of senses. For example, it is trivial to show that for any function $\psi$, $\Approx\psi$ is comeager.\Footnote{We recall that a subset of a Baire space is called \emph{comeager} or \emph{residual} if it contains a dense $G_\delta$ set. By the Baire category theory, the class of comeager sets is closed under countable intersections. The complement of a comeager set is called \emph{meager} or \emph{of the first category}.} The first nontrivial result concerning the size of $\Approx\psi$ came in 1924, when A. Y. Khinchin proved the case $\firstdim = \seconddim = 1$ of the theorem below \cite{Khinchin1}. Two years later Khinchin extended his result to include the case $\qdim = 1$, $\pdim$ arbitrary \cite{Khinchin2}; the general case was proven by A. V. Groshev in 1938 \cite{Groshev}.

\begin{theorem*}[The Khinchin--Groshev theorem]
Fix $\psi:\N\to (0,\infty)$, and consider the series
\begin{equation}
\label{khinchinseries}
\sum_{q = 1}^\infty q^{\qdim - 1}\psi^\pdim(q).
\end{equation}
\begin{itemize}
\item[(i)] If \eqref{khinchinseries} converges, then $\Approxmod\psi\norms$ is of Lebesgue measure zero.
\item[(ii)] If \eqref{khinchinseries} diverges and the function $q\mapsto q^\qdim\psi^\pdim(q)$ is nonincreasing,\Footnote{Weakening this assumption has been the motivation for much further research in this area, culminating in its complete elimination in the case $(\pdim,\qdim)\neq (1,1)$ \cite{BeresnevichVelani2}. When $\pdim = \qdim = 1$, the theorem is false without the assumption, and the expected measure of $\Approx\psi$ for arbitrary $\psi$ is described by the Catlin conjecture \cite[p.28]{Harman}.} then $\Approxmod\psi\norms$ is of full Lebesgue measure (i.e. its complement has measure zero).
\end{itemize}
\end{theorem*}

\begin{remark*}
Case (i) (the ``convergence case'') of the Khinchin--Groshev theorem is a standard application of the Borel--Cantelli lemma; case (ii) (the ``divergence case'') is the difficult direction.
\end{remark*}

Fix $c > 0$, and let $\psi_c(q) = q^{-c}$. One naturally observes that \eqref{khinchinseries} converges if $c > \qdim/\pdim$, and thus in this case the sets $\Approx{\psi_c}$ are of Lebesgue measure zero. Nevertheless, the sets $\Approx{\psi_c}$ still turn out to have positive Hausdorff dimension. The following result was proven for $\qdim = 1$, $\pdim$ arbitary by V. Jarn\'ik in 1929 \cite{Jarnik3}, and in general by J. D. Bovey and M. M. Dodson in 1986 \cite{BoveyDodson}. Moreover, the case $\firstdim = \seconddim = 1$ was proven independently by A. S. Besicovitch in 1934 \cite{Besicovitch}.

\begin{theorem}[The Jarn\'ik--Besicovitch--Bovey--Dodson Theorem]
\label{theoremJBBD}
For $c > \qdim/\pdim$,
\[
\HD(\Approx{\psi_c}) = (\qdim - 1)\pdim + \frac{\dimsum}{1 + c}\cdot
\]
\end{theorem}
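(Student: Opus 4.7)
The plan is to establish matching upper and lower bounds on $\HD(\Approx{\psi_c})$; set $s := \pdim(\qdim - 1) + \dimsum/(1+c)$.

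\textbf{Upper bound.} Since $\Approx{\psi_c}$ is $\Z^\dimprod$-periodic, I work inside $[0,1]^\dimprod$. Fix $\qq \in \Z^\qdim \butnot \{\0\}$ with $q := \|\qq\|_\dnorm$. For each row $\aa_i$ of $A$, the constraint $|\aa_i \cdot \qq - p_i| \leq q^{-c}$ cuts out a slab in $\R^\qdim$ of width $\asymp q^{-1-c}$. Intersecting across the $\pdim$ (independent) rows gives a parallelepiped with $\pdim$ short axes of length $\asymp q^{-1-c}$ and $\pdim(\qdim-1)$ long axes of length $\asymp 1$, coverable by $\asymp q^{(1+c)\pdim(\qdim-1)}$ cubes of side $q^{-1-c}$. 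Since for each $\qq$ there are $\asymp q^\pdim$ admissible $\pp$, and $\asymp q^{\qdim-1}$ vectors $\qq$ with $\|\qq\|_\dnorm = q$, the $s'$-Hausdorff content at scale $Q^{-1-c}$ is bounded by
\[
\sum_{q \geq Q} q^{\qdim - 1} \cdot q^\pdim \cdot q^{(1+c)\pdim(\qdim-1)} \cdot (q^{-1-c})^{s'} = \sum_{q \geq Q} q^{\dimsum - 1 + (1+c)\pdim(\qdim-1) - (1+c)s'},
\]
which tends to $0$ as $Q \to \infty$ precisely when $s' > s$; hence $\HD(\Approx{\psi_c}) \leq s$.

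\textbf{Lower bound.} This is the delicate half, and there are two natural routes. One modern approach is via a Mass Transference Principle for rectangles (Wang--Wu, Allen--Baker, building on Beresnevich--Velani): the Khinchin--Groshev divergence case applied at the Dirichlet exponent $\psi_\ast(q) = q^{-\qdim/\pdim}$ gives full Lebesgue measure for the $\limsup$ of the critical resonant bricks, and the anisotropic MTP then upgrades this to full $\mathcal{H}^s$-measure for the $\limsup$ of the shrunken bricks corresponding to $\psi_c$. The classical alternative (essentially the route of Bovey--Dodson via Falconer's regular/ubiquitous systems) is a direct Cantor-type construction: choose rapidly growing scales $Q_k$ with $Q_{k+1} \gg Q_k^{1+c}$, at stage $k$ intersect the previous level with a carefully chosen collection of resonant bricks at scale $Q_k$ that nest inside the stage-$(k-1)$ bricks, and conclude via the mass distribution principle.

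\textbf{Main obstacle.} All the difficulty is in the lower bound, and concentrates on the anisotropy of the resonant bricks. The correct value $s$ reflects both the $\pdim(\qdim-1)$ full-dimensional ``long'' directions and the $\pdim$ short directions that scale like $q^{-1-c}$. A naive product construction $\Approx{\psi_c}^{\mathrm{sim}} \times \R^{\pdim(\qdim-1)}$, arising from the special choice $\qq = (0, \ldots, 0, q)$ and the classical simultaneous Jarn\'ik--Besicovitch theorem, only realises dimension $\pdim(\qdim-1) + (\pdim+1)/(1+c)$, missing $s$ by $(\qdim-1)/(1+c)$. Recovering the sharp constant therefore forces one to use vectors $\qq$ with genuinely all coordinates active, so that the anisotropic brick contributes its full dimension in every direction; wrestling with this anisotropy within either the MTP framework or the Cantor construction is the crux of the argument.
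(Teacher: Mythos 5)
The paper does not prove this theorem from scratch: it is a cited classical result (Jarn\'ik, Besicovitch, Bovey--Dodson), and the paper's stated route to it is to specialize Theorem \ref{theoremJDV} by taking $f(\rho)=\rho^{s}$ and $\psi=\psi_c$ (one checks that the series \eqref{JDVseries} becomes $\sum_q q^{-1+\qdim-c\pdim+(1+c)(\dimprod-s)}$, which diverges exactly for $s\le(\qdim-1)\pdim+\dimsum/(1+c)$). Your upper bound is correct and complete, and is essentially the Hausdorff--Cantelli computation underlying the convergence half of that theorem: each $\Delta_{\psi_c}(\pp,\qq)$ meets the unit cube in a product of $\pdim$ hyperplane neighbourhoods of thickness $\asymp q^{-1-c}$, the covering count $q^{(1+c)\pdim(\qdim-1)}$ at scale $q^{-1-c}$ is right, and your exponent bookkeeping gives convergence precisely for $s'>s$.

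The lower bound, however, is where the entire content of the theorem lives, and as written it is a plan rather than a proof. You correctly name the two viable routes (a mass transference principle adapted to anisotropic rectangles, or the regular/ubiquitous systems argument of Bovey--Dodson), and your diagnosis of the obstacle is accurate --- in particular the observation that the product construction coming from $\qq=(0,\dots,0,q)$ and the simultaneous Jarn\'ik--Besicovitch theorem only reaches $\pdim(\qdim-1)+(\pdim+1)/(1+c)$, falling short by $(\qdim-1)/(1+c)$, shows you see exactly why all coordinates of $\qq$ must be active. But neither route is executed: you do not construct the Cantor set or the measure for the mass distribution principle, you do not verify the ubiquity hypothesis for the resonant hyperplanes $\{A:A\qq=\pp\}$, and you do not state or apply an MTP that handles rectangles with $\pdim$ short and $\pdim(\qdim-1)$ long axes. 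So the proposal establishes only $\HD(\Approx{\psi_c})\le s$. The shortest honest completion consistent with this paper is the one it indicates itself: quote Theorem \ref{theoremJDV} with $f=f_s$, $\psi=\psi_c$, verify its monotonicity hypotheses for these choices, and read off $\HH^s(\Approx{\psi_c})=\infty$ for $s<(\qdim-1)\pdim+\dimsum/(1+c)$.
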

Here and throughout, $\HD(S)$ denotes the Hausdorff dimension of a set $S$.

\begin{remark*}
In Theorem \ref{theoremJBBD}, the limiting dimension $\lim_{c\to\infty} \HD(\Approx{\psi_c}) = (\qdim - 1)\pdim$ is not zero unless $\qdim = 1$. The reason for this is that for every function $\psi$, $\Approx\psi$ contains the set $\{A\in\MM: A\qq \in \Z^\pdim\text{ for some }\qq\in \Z^\qdim\butnot\{\0\}\}$, and it can be computed that the dimension of this set is $(\qdim - 1)\pdim$. Thus by monotonicity of Hausdorff dimension, we have $\HD(\Approx\psi) \geq (\qdim - 1)\pdim$ for every function $\psi$.
\end{remark*}

In fact, Jarn\'ik proved a stronger statement than the above. Given a dimension function $f$,\Footnote{We recall that a \emph{dimension function} is a nondecreasing function $f:(0,\infty)\to(0,\infty)$ such that $\lim_{\rho\to 0} f(\rho) = 0$.} let $\HH^f$ denote the Hausdorff $f$-measure (see Section \ref{sectionHD}). Let $f_\ast:(0,\infty)\to(0,\infty)$ be the function defined by the formula
\begin{equation}
\label{fastdef}
f_\ast(\rho) = \rho^\dimprod.
\end{equation}
The following result was proven for $\qdim = 1$, $\pdim$ arbitrary by Jarn\'ik in 1929 \cite{Jarnik3}, and in general by D. Dickinson and S. L. Velani in 1997 \cite{DickinsonVelani}:

\begin{theorem}[The Jarn\'ik--Dickinson--Velani Theorem]
\label{theoremJDV}
Let $\psi:\N\to (0,\infty)$, let $f$ be a dimension function, and assume that
\begin{itemize}
\item the functions $\psi/\psi_\ast$ and $f/f_\ast$ are nonincreasing and satisfy
\begin{align*}
\frac{\psi(q)}{\psi_\ast(q)} &\tendsto{q\to\infty} 0,&
\frac{f(\rho)}{f_\ast(\rho)} &\tendsto{\rho \to 0} \infty;
\end{align*}
\item the function
\[
q\mapsto \left(\frac{\psi(q)}{\psi_\ast(q)}\right)^\pdim \frac{f(\Psi(q))}{f_\ast(\Psi(q))}
\]
is nonincreasing, where
\[
\Psi(q) = \psi(q)/q.
\]
\end{itemize}
Consider the series
\begin{equation}
\label{JDVseries}
\sum_{q = 1}^\infty \frac{1}{q} \left(\frac{\psi(q)}{\psi_\ast(q)}\right)^\pdim \frac{f(\Psi(q))}{f_\ast(\Psi(q))}\cdot
\end{equation}
Then
\[
\HH^f(\Approxmod\psi\norms) = \begin{cases}
0 & \text{if \eqref{JDVseries} converges}\\
\infty & \text{if \eqref{JDVseries} diverges}
\end{cases}.
\]
%\begin{itemize}
%\item[(i)] If \eqref{JDVseries} converges, then $\HH^f(\Approx\psi) = 0$.
%\item[(ii)] If \eqref{JDVseries} diverges and if $\Psi$ is nonincreasing, then $\HH^f(\Approx\psi) = \infty$.
%\end{itemize}
\end{theorem}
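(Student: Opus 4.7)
The theorem is a zero-infinity dichotomy for Hausdorff $f$-measure, and I would handle the two directions by quite different arguments, with the divergence case carrying the real content. For the convergence half, the natural Hausdorff--Cantelli covering argument suffices: reducing to a single unit cube via a countable union of translates, $\Approxmod\psi\norms$ is contained, for every $Q$, in the tail $\bigcup_{q \geq Q} \bigcup_{\|\qq\|_\qnorm = q} \bigcup_\pp E_{\pp,\qq}$, where $E_{\pp,\qq} = \{A \in \MM : \|A\qq - \pp\|_\pnorm \leq \psi(q)\}$ and $q = \|\qq\|_\qnorm$. Picking a coordinate $j_0$ of $\qq$ with $|q_{j_0}| \asymp q$, each row-constraint in $E_{\pp,\qq}$ cuts out a slab of thickness $\asymp \Psi(q) = \psi(q)/q$ in the corresponding row, so $E_{\pp,\qq} \cap [0,1]^\dimprod$ admits a cover by $\asymp \Psi(q)^{-\pdim(\qdim-1)}$ balls of radius $\Psi(q)$. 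With $\asymp q^{\qdim-1}$ vectors $\qq$ satisfying $\|\qq\|_\qnorm = q$ and $\asymp q^\pdim$ relevant $\pp$'s for each such $\qq$, the total $f$-mass of the cover at level $q$ is, after elementary exponent bookkeeping, a constant multiple of the $q$-th summand of \eqref{JDVseries}. Convergence of the series therefore gives $\HH^f(\Approxmod\psi\norms) = 0$.

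For the divergence half, I would construct a Cantor-type subset $K \subset \Approxmod\psi\norms$ together with a probability measure $\nu$ on $K$ satisfying the local upper bound $\nu(\ball{x}{r}) \lesssim f(r)$; the mass distribution principle then gives $\HH^f(K) > 0$, and running the construction inside an arbitrary open set (or summing disjoint copies) upgrades this to $\HH^f(\Approxmod\psi\norms) = \infty$. The construction proceeds by iterated refinement, driven by ubiquity of the resonant sets $R_{\pp,\qq} = \{A : A\qq = \pp\}$: for every ball $B$ of radius $r$ and every $Q$ sufficiently large relative to $r$, the union over $\|\qq\|_\qnorm \leq Q$ of the $(\psi_\ast(Q)/Q)$-neighborhoods of the $R_{\pp,\qq}$ covers a definite proportion of $B$. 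This ubiquity is a direct consequence of Minkowski's theorem applied to a suitably scaled convex body, and is insensitive to the choice of $\psi$ and $f$. At each generation I would refine every surviving ball $B_j$ of radius $r_j$ into a maximal disjoint packing of balls of radius $r_{j+1}$, each centered within $\psi(Q_j)$ of some $R_{\pp,\qq}$ with $\|\qq\|_\qnorm \asymp Q_j$; the intersection $K = \bigcap_j \bigcup B_j$ then lies in $\Approxmod\psi\norms$ by construction.

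The crux of the argument, and the reason for the monotonicity hypothesis on the product $(\psi/\psi_\ast)^\pdim (f/f_\ast)(\Psi)$, is synchronizing the scales $(Q_j, r_j)$ so that each step contributes the right amount of mass. Monotonicity lets one advance $Q_j$ each time the partial sum of \eqref{JDVseries} doubles and define $\nu$ by assigning mass $\asymp f(r_j)$ to each generation-$j$ ball, so that divergence of the series drives the induction forward while the inductive mass bound upgrades to $\nu(\ball{x}{r}) \lesssim f(r)$ at intermediate scales. The Minkowski-type ubiquity statement and the mass distribution principle are standard; the technically delicate part is the exponent arithmetic that matches the number of children of each $B_j$ to the summand of \eqref{JDVseries} at scale $q \asymp Q_j$, and verifying that the packing density promised by ubiquity is preserved through the passage to the refined balls.
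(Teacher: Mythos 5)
This theorem is not proved in the paper at all: it is quoted as background (attributed to Jarn\'ik and to Dickinson--Velani), and the only guidance the paper gives is the remark that the convergence case is a straightforward Hausdorff--Cantelli argument while the divergence case is the difficult direction. So there is no in-paper proof to compare against; your proposal is, in effect, a reconstruction of the classical Dickinson--Velani argument, and it follows that route rather than anything resembling the lattice-counting/evaporation-rate machinery the paper develops for its own theorems about $\Bad\psi$.

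Your convergence half is correct and essentially complete. Each set $\{A : \|A\qq-\pp\|\leq\psi(q)\}$ meets the unit cube in a $\Psi(q)$-neighborhood of an affine subspace of dimension $\pdim(\qdim-1)$, hence is covered by $\asymp \Psi(q)^{-\pdim(\qdim-1)}$ balls of radius $\Psi(q)$, and the count $q^{\qdim-1}\cdot q^{\pdim}\cdot\Psi(q)^{-\pdim(\qdim-1)}f(\Psi(q))$ does reduce, after writing $(\psi/\psi_\ast)^\pdim=q^\qdim\psi^\pdim(q)$ and $f_\ast(\Psi(q))=(\psi(q)/q)^{\dimprod}$, to a constant times the $q$-th summand of \eqref{JDVseries}; convergence then kills the measure. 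The divergence half, by contrast, is only an outline: the architecture (local ubiquity of the resonant sets from Minkowski's theorem, a Cantor scheme with scales $(Q_j,r_j)$ synchronized via the monotonicity of $(\psi/\psi_\ast)^\pdim (f/f_\ast)(\Psi)$, a Frostman measure, and the upgrade from $\HH^f>0$ to $\HH^f=\infty$ via integer translation invariance) is the right one, but the two steps that constitute the actual content of Dickinson--Velani are asserted rather than carried out: the quantitative ubiquity statement in the precise form needed (a definite proportion of \emph{every} ball covered by the $(\psi_\ast(Q)/Q)$-neighborhoods, uniformly in the ball), and the verification that the inductively defined measure satisfies $\nu(\ball{x}{r})\lesssim f(r)$ at all intermediate radii $r_{j+1}\leq r\leq r_j$, which is exactly where the monotonicity hypothesis and the doubling-of-partial-sums device must be deployed. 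As a blueprint it is sound; as a proof it defers the hard part.
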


Note that Theorem \ref{theoremJDV} includes Theorem \ref{theoremJBBD} as a corollary. Indeed, setting $f(\rho) = \rho^s$ and $\psi = \psi_c$ yields Theorem \ref{theoremJBBD}.

\begin{remark*}
As with the Khinchin--Groshev theorem, the convergence case of Theorem \ref{theoremJDV} is a straightforward application of the Hausdorff--Cantelli lemma \cite[Lemma 3.10]{BernikDodson}, while the divergence case is the difficult direction.
\end{remark*}

One may also ask about the characteristics of the sets $\Bad\psi \df \MM\butnot \Approx\psi$. From the measure-theoretic and topological points of view, there is no new information here; $\Bad\psi$ is meager, and it is of measure zero if and only if $\Approx\psi$ is of full measure. However, from the dimension point of view there is more to ask. In this context, the main result was proven by Jarn\'ik in 1928 for the case $\firstdim = \seconddim = 1$. The general case was proven by W. M. Schmidt in 1969 \cite{Schmidt2}, using the game he had introduced three years earlier \cite{Schmidt1} which is now known as Schmidt's game.

\begin{theorem*}[The Jarn\'ik--Schmidt Theorem]
The set $\BA = \bigcup_{\multerror > 0}\Bad{\multerror\psi_\ast}$ has Hausdorff dimension $\dimprod$.
\end{theorem*}

Since $\dimprod$ is the largest possible dimension, this closes the question of the dimension of $\Bad\psi$ for functions $\psi$ such that $\psi/\psi_\ast\to 0$, since such functions satisfy $\BA \subset \Bad\psi$. On the other hand, for functions $\psi$ such that $\psi\geq \psi_\ast$, we have $\Badmod\psi\norms = \emptyset$ by the corollary to Dirichlet's theorem. To summarize,
\[
\HD(\Badmod\psi\norms) = \begin{cases}
mn & \text{if } \psi/\psi_\ast \to 0\\
0 & \text{if } \psi \geq \psi_\ast\\
\text{unknown} & \text{otherwise}
\end{cases}.
\]

\subsection{Main results}

Although the Jarn\'ik--Schmidt theorem tells us the Hausdorff dimension of $\Bad\psi$ for any function $\psi$ satisfying $\psi/\psi_\ast\to 0$, it leaves open the following two natural questions:

\begin{quest}
\label{question1}
What are the dimensions of the sets $\Bad\kappa$ ($\kappa > 0$), where for each $\kappa > 0$
\[
\Bad\kappa = \Badmod\kappa\greeks \df \Badmod{\kappa^{1/\pdim}\psi_\ast}\greeks
= \left\{A\in\MM : \begin{split}\|A\qq - \pp\|_\pgreek^\pdim \|\qq\|_\qgreek^\qdim > \kappa \text{ for all but}\\ \text{finitely many $(\pp,\qq)\in \Z^\dimsum$}\end{split}\right\} \;\;\;\;?
\]
\end{quest}
\begin{quest}
\label{question2}
Given a dimension function $f$ and a function $\psi$ such that $\psi/\psi_\ast\to 0$, what is $\HH^f(\Bad\psi)$?
\end{quest}

In the present paper, we give fairly complete answers to both of these questions. To Question \ref{question1}, we give the following asymptotic answer:

\begin{theorem}
\label{theoremHDasymp}
Given any two norms $\|\cdot\|_\pgreek$ and $\|\cdot\|_\qgreek$ on $\R^\firstdim$ and $\R^\qdim$ respectively, we have
\[
\lim_{\kappa\to 0} \frac{\dimprod - \HD(\Badmod\kappa\greeks)}{\kappa} = \theta_{\greeks} \df \frac{\Vgreeks}{2\zeta(\dimsum)}\frac{\dimprod}{\dimsum}
\]
or equivalently
\begin{equation}
\label{thetamn}
\HD(\Badmod\kappa\greeks) = \dimprod - \theta_{\greeks}\kappa + o(\kappa).
\end{equation}
Here $V_\firstgreek$ (resp. $V_\secondgreek$) denotes the volume of the unit ball in $\R^\firstdim$ (resp. $\R^\seconddim$) with respect to the $\|\cdot\|_\firstgreek$ (resp. $\|\cdot\|_\secondgreek$) norm, $\zeta$ denotes the Riemann zeta function, and $\HD$ denotes Hausdorff dimension.
\end{theorem}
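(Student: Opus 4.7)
My plan is to establish matching upper and lower bounds on $\HD(\Badmod\kappa\greeks)$, each of the form $\dimprod - \theta_{\greeks}\kappa + o(\kappa)$. I first reduce the problem to the compact torus $\MM/\Z^\dimprod$: since $\Badmod\kappa\greeks$ is invariant under integer translations of $A$, and since any non-primitive pair $(\pp,\qq) = d(\pp',\qq')$ satisfies $\|A\qq-\pp\|_\pgreek^\pdim\|\qq\|_\qgreek^\qdim = d^\dimsum\|A\qq'-\pp'\|_\pgreek^\pdim\|\qq'\|_\qgreek^\qdim$, the condition defining $\Badmod\kappa\greeks$ becomes one of avoiding a family of tubular forbidden regions on the torus indexed by primitive pairs $(\pp,\qq)\in\Z^\dimsum$ modulo the sign identification $(\pp,\qq)\sim-(\pp,\qq)$, which will produce the factor $1/2$ in $\theta_{\greeks}$. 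Each such tube has Haar measure $V_\pgreek\kappa\|\qq\|_\qgreek^{-\qdim}/\gcd(\qq)^\pdim$, and a Dirichlet-series computation based on the Jordan totient identity $\sum_d J_\pdim(d)/d^\dimsum = \zeta(\qdim)/\zeta(\dimsum)$, together with the asymptotic density of primitive lattice vectors, shows that the combined measure of tubes in a dyadic shell $Q\leq\|\qq\|_\qgreek<2Q$ is asymptotic to $\frac{V_\pgreek V_\qgreek\qdim\log 2}{2\zeta(\dimsum)}\kappa$, independent of $Q$.

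For the upper bound, I cover $\Badmod\kappa\greeks$ at each dyadic scale $\eta_k\asymp\kappa^{1/\pdim}2^{-k\dimsum/\pdim}$ (the transverse width of tubes in the $k$th shell), discarding at each level the boxes meeting any tube from that shell. The standard covering bound on Hausdorff dimension, combined with the scale relation $\log(\eta_k/\eta_{k+1}) = (\dimsum/\pdim)\log 2$, converts the per-shell measure loss into the upper estimate $\dimprod - \theta_{\greeks}\kappa + o(\kappa)$; the factor $\dimprod/\dimsum$ in $\theta_{\greeks}$ emerges precisely from this log-ratio. For the lower bound, I construct an iterated Cantor-like subset of $\Badmod\kappa\greeks$: starting from the torus, at level $k$ each surviving box of side $\eta_{k-1}$ is partitioned into sub-boxes of side $\eta_k$, and only those missing every tube from the shell $[2^{k-1},2^k)$ are retained. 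Equipping the Cantor limit with its natural uniform probability measure and applying the mass distribution principle yields the matching lower bound.

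The principal obstacle is a quasi-independence estimate: tubes at different dyadic levels (and within a single level) may overlap, so the naive product formula for retention rates needs justification via careful overlap bounds showing that the measure loss is additive to leading order in $\kappa$. This step is analogous to the quasi-independence arguments that drive the divergence half of the Khinchin--Groshev theorem and is where the bulk of the technical work will lie; among other things, it dictates the precise choice of subdivision grids at each scale, adapted to the non-axis-aligned geometry of the tubes in $\MM$. A further delicate point, needed to confirm the constant $\theta_{\greeks}$ exactly, is the $\qdim=1$ case: primitive vectors in $\Z^\qdim$ there are just $\pm 1$, so the primitive-density argument degenerates and one must instead carry out the Dirichlet-series computation via $J_\pdim$ directly, verifying by inspection that the resulting asymptotic agrees with the one obtained for $\qdim\geq 2$.
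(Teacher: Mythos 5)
Your outline reproduces the architecture of the paper's own proof --- a Cantor-type construction plus the mass distribution principle for the lower bound, a same-scale covering for the upper bound, and the constant extracted from the density of primitive lattice points modulo sign (the $2\zeta(\dimsum)$) together with the scale ratio $\log(\eta_k/\eta_{k+1})=(\dimsum/\pdim)\log 2$ (the $\pdim/\dimsum$); your constant bookkeeping is correct. The gap sits exactly at the step you flag as the principal obstacle, and it is more serious than your framing suggests. The quasi-independence required here is not the global, averaged second-moment estimate that drives the divergence case of Khinchin--Groshev; it is a \emph{local} statement, uniform over every surviving box of the construction: inside each box $B$ of side $\eta_k$ surviving to stage $k$, the tubes of the next shell must remove a proportion of the volume of $B$ asymptotic to the global average (from below for the upper bound --- and there for the \emph{union}, not the sum, of the tubes --- and from above for the lower bound). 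The configuration of those tubes inside $B$ is governed by the renormalized lattice $g_{t_k}u_A\Lambda_*$, where $A$ is the center of $B$ and $(g_t)$ is the diagonal flow, and this lattice can be arbitrarily degenerate; when it has an abnormally short vector, the local tube count and the local overlaps deviate wildly from the average, and no averaged overlap bound controls the retention rate of an individual box. Concretely, for the lower bound some surviving boxes could lose essentially all of their children at the next stage, destroying the evaporation-rate bound needed for the mass distribution principle; for the upper bound some boxes could lose too few.

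The paper closes this gap with two inputs absent from your sketch. First, a measure and quasi-independence estimate valid for an \emph{arbitrary} unimodular lattice $\Lambda$, with error controlled by an irregularity $\Irr(\Lambda)$ built from Minkowski minima of $\Lambda$ and its dual (Theorem \ref{theoremestimates}). Second, a way to control $\Irr$ of the renormalized lattices along the construction: for the lower bound, mixing of the diagonal flow on the space of lattices (Howe--Moore) lets one discard an extra $\epsilon$-proportion of children at each stage so that the renormalized lattice stays in a fixed compact set $K_R$, where the estimates are uniform, at a cost that vanishes as $R\to\infty$; for the upper bound, the Dani/Kleinbock--Margulis correspondence shows that a box whose renormalized lattice is too degenerate consists of $\psi$-approximable matrices and hence is disjoint from $\Badmod\kappa\greeks$, so on the boxes that matter $\Irr$ is bounded by a power of $\kappa^{-1}$ and the resulting error is $o(1)$ relative to the main term as $\kappa\to 0$. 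Without this renormalization-plus-equidistribution input (or an equivalent), your construction does not close; ``careful overlap bounds'' alone cannot supply the required uniformity over boxes.
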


This theorem has been preceded by many partial results. When $\firstdim = \seconddim = 1$, J. Kurzweil \cite[Theorem VII]{Kurzweil} proved that% \hspace{2 in}
\begin{equation}
\label{kurzweil}
1 - .99\kappa \leq \HD(\Badmod\kappa{\onenorm,\onenorm}) \leq 1 - .25 \kappa
\end{equation}
for all sufficiently small $\kappa$. Here $\|\cdot\|_\onenorm$ denotes the absolute value function on $\R$. Later D. Hensley \cite{Hensley} (cf. \cite[Theorem 1.9]{Bugeaud}) improved this result by showing that \hspace{2 in}
\begin{equation}
\label{hensley}
\HD(\Badmod\kappa{\onenorm,\onenorm}) = 1 - \frac{6}{\pi^2}\kappa - \frac{72}{\pi^4} \kappa^2 |\log(\kappa)| + O(\kappa^2).
\end{equation}
Of course, \eqref{hensley} is stronger than \eqref{kurzweil} since $.25 < 6/\pi^2 < .99$. \eqref{hensley} is also slightly stronger than \eqref{thetamn}, since it has an estimate on the error term $o(\kappa)$. Here is the calculation which checks that \eqref{thetamn} and \eqref{hensley} agree on the second term:
\[
\theta_{\onenorm,\onenorm} = \frac{2\cdot 2}{2\zeta(2)}\frac{1\cdot 1}{1 + 1} = \frac{1}{\zeta(2)} = \frac{6}{\pi^2}\cdot
\]
When $\qdim = 1$ and $\pdim$ is arbitrary, S. Weil \cite{Weil1} showed that
\[
\pdim - k_1 \frac{\kappa^{1/(2\pdim)}}{|\log(\kappa)|} \leq \HD(\Bad\kappa) \leq \pdim - k_2 \frac{\kappa^{\pdim + 1}}{|\log(\kappa)|}
\]
for some constants $k_1,k_2 > 0$ depending on $\pdim$. This was improved by R. Broderick and D. Y. Kleinbock \cite{BroderickKleinbock} who showed that for $\dims$ both arbitrary,
\begin{equation}
\label{broderickkleinbock}
\dimprod - k_3 \frac{\kappa^{1/p(\dims)}}{|\log(\kappa)|} \leq \HD(\Bad\kappa) \leq \dimprod - k_4 \frac{\kappa}{|\log(\kappa)|}\cdot
\end{equation}
for some constants $k_3,k_4, p(\dims) > 0$ depending on $\firstdim$ and $\seconddim$, with the property that $p(\pdim,1) = 2\pdim \all \pdim$. Note that the results of Kurzweil, Weil, and Broderick--Kleinbock all follow immediately from \eqref{thetamn}, while the result of Hensley does not. The conjecture of Broderick and Kleinbock made on \cite[last para. of p.3]{BroderickKleinbock} also follows immediately from \eqref{thetamn}.

\begin{remark*}
Although the result \eqref{broderickkleinbock} is superceded by \eqref{thetamn}, the left-hand inequality of \eqref{broderickkleinbock} is still interesting in that it was proven using Schmidt games \cite{Schmidt1}, and appears to be the optimal result that one can prove using this technique. Thus the left-hand inequality of \eqref{broderickkleinbock} illustrates both the strengths and weaknesses of the Schmidt games technique when applied to problems more precise than simply asking for full dimension of $\BA$.
\end{remark*}

\begin{remark*}
Although Theorem \ref{theoremHDasymp} provides asymptotic information about the nonincreasing function $f_{\greeks}(\kappa) = \HD(\Badmod\kappa\greeks)$, the following questions concerning the behavior of $f_\greeks$ are still open:
\begin{itemize}
\item[1.] Is $f_{\greeks}$ continuous? It was recently proven by C. G. T. de A. Moreira \cite{Moreira} that $f_{\onenorm,\onenorm}$ is continuous, but his techniques depend heavily on continued fractions and do not generalize to higher dimensions, where the question remains open.
\item[2.] What is the supremum of $\kappa$ for which $f_{\greeks}(\kappa) > 0$? Again, the answer is known only in dimension one, where Moreira showed that $f_{\onenorm,\onenorm}(\kappa) = 0$ if and only if $\kappa \geq 1/3$ \cite[Theorem 1(ii)]{Moreira}.
\end{itemize}
\end{remark*}

\begin{remark*}
It is worthwhile to note that Theorem \ref{theoremHDasymp} provides an alternate proof of the Jarn\'ik--Schmidt Theorem. The existing proofs \cite{Schmidt2, Fishman2, BFS1} have all been minor variations of each other, so it is nice to get an independent proof. (It should be noted here that when $\firstdim = 1$ or $\seconddim = 1$, there are easier proofs available \cite{KTV, Fishman, BFKRW} which rely on the so-called Simplex Lemma \cite[Lemma 4]{KTV}.)
\end{remark*}

%\begin{remark*}
%The \emph{Lagrange constant} corresponding to a matrix $A\in\MM$ is the number
%\[
%\liminf_{\substack{(\pp,\qq)\in \Z^\dimsum \\ \|\qq\|\to\infty}} \|A\qq - \pp\|^\pdim \|\qq\|^\qdim.
%\]
%\end{remark*}

Regarding Question \ref{question2}, we prove the following theorem, which should be thought of as an analogue of Theorem \ref{theoremJDV} for the sets $\Bad\psi$ ($\psi:(0,\infty)\to (0,\infty)$):

\begin{theorem}
\label{maintheorem}
Let $f$ be a dimension function, and let $\psi:(0,\infty)\to (0,\infty)$ be a function such that \eqref{khinchinseries} diverges but $\psi/\psi_\ast$ is nonincreasing and tends to zero. Let
\begin{equation}
\label{Lfpsi}
L_{f,\psi} \df \liminf_{\rho\to 0} \frac{\log\left(\frac{f(\rho)}{f_\ast(\rho)}\right)}{F_\psi(\rho^{-\pdim/(\dimsum)})}
\end{equation}
(cf. \eqref{fastdef}), where
\begin{equation}
\label{Fpsidef1}
F_\psi(Q) = \sum_{q = 1}^Q q^{\qdim - 1}\psi^\pdim(q).
\end{equation}
%(The value of $L_{f,\psi}$ is independent of the choice of $Q_0$, as long as $\psi$ is defined at $Q_0$.)
Then
\begin{equation}
\label{HfBpsi}
\HH^f(\Badmod\psi\greeks)
%= \underline\BB^f(\Bad\psi) % Only true for closed analogues
= \begin{cases}
0 & \text{if } L_{f,\psi} < \eta_{\greeks}\\
\infty & \text{if } L_{f,\psi} > \eta_{\greeks}\\
\text{unknown} & \text{if } L_{f,\psi} = \eta_{\greeks}
\end{cases},
\end{equation}
where
\begin{equation}
\label{etadef}
\eta_{\greeks} = \frac{\qdim \Vgreeks}{2\zeta(\dimsum)} = \frac{\dimsum}{\pdim} \theta_{\greeks}\cdot
\end{equation}
%\begin{equation}
%\label{HfBpsi}
%\HH^f(\Bad\psi) = \begin{cases}
%0 & L_{f,\psi} < c_{\greeks}(M_\psi)\\
%\infty & L_{f,\psi} > C_{\greeks}(M_\psi)\\
%\text{unknown} & c_{\greeks}(M_\psi) \leq L_{f,\psi} \leq C_{\greeks}(M_\psi)
%\end{cases},
%\end{equation}
%where $c_{\greeks}(M_\psi),C_{\greeks}(M_\psi):\Rplus\to(0,\infty)$ are continuous functions depending only on $\firstdim$ and $\seconddim$, with the property that
%\[
%c_{\greeks}(M_\psi)(0) = C_{\greeks}(M_\psi)(0) = \theta_{\greeks}.
%\]
\end{theorem}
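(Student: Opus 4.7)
The overall strategy is to view Theorem \ref{maintheorem} as a refinement of Theorem \ref{theoremHDasymp}, with the function $\psi$ playing the role of a ``variable $\kappa$'' and the dimension function $f$ replacing the pure power $\rho\mapsto\rho^s$. As a consistency check, for $\psi=\kappa^{1/\pdim}\psi_\ast$ one has $F_\psi(\rho^{-\pdim/\dimsum})\sim(\kappa\pdim/\dimsum)|\log\rho|$, and for $f(\rho)=\rho^s$ one has $\log(f(\rho)/f_\ast(\rho))=(\dimprod-s)|\log\rho|$, so $L_{f,\psi}=\dimsum(\dimprod-s)/(\kappa\pdim)$; the dichotomy $L_{f,\psi}<\eta_\greeks$ vs.\ $L_{f,\psi}>\eta_\greeks$ collapses to $s>\dimprod-\theta_\greeks\kappa$ vs.\ $s<\dimprod-\theta_\greeks\kappa$, matching Theorem \ref{theoremHDasymp}. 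This identifies $\log(f(\rho)/f_\ast(\rho))/F_\psi(\rho^{-\pdim/\dimsum})$ as the ``local excess codimension'' of $\Badmod\psi\greeks$ at scale $\rho$, and suggests that the proof should run by a scale-by-scale tracking of this quantity, paralleling the machinery of Theorem \ref{theoremHDasymp}.

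For the convergence direction ($L_{f,\psi}<\eta_\greeks$, giving $\HH^f(\Badmod\psi\greeks)=0$), I would pick a subsequence $\rho_k\to 0$ with $Q_k:=\rho_k^{-\pdim/\dimsum}$ along which $\log(f(\rho_k)/f_\ast(\rho_k))<(\eta_\greeks-\varepsilon)F_\psi(Q_k)$ for some fixed $\varepsilon>0$, and cover $\Badmod\psi\greeks\cap[0,1]^\dimprod$ by $\rho_k$-cubes. The needed ingredient, to be extracted from the upper bound half of the proof of Theorem \ref{theoremHDasymp}, is a first-moment estimate: after discarding the strips $\{A\in\MM:\exists\pp\in\Z^\pdim,\ \|A\qq-\pp\|_\pgreek\le\psi(q)\}$ for all $\qq\in\Z^\qdim$ with $1\le\|\qq\|_\qgreek\le Q_k$, at most
\[
N_k\le\rho_k^{-\dimprod}\exp\bigl(-(\eta_\greeks-o(1))F_\psi(Q_k)\bigr)
\]
cubes survive, the exponent collecting the per-level volume losses summed over primitive $\qq$ (this is where the factors $\Vgreeks$, $\zeta(\dimsum)^{-1}$ and $\qdim$ defining $\eta_\greeks$ appear). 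Multiplying by $f(\rho_k)=(f/f_\ast)(\rho_k)\cdot\rho_k^\dimprod$ and using the choice of $\rho_k$ yields total $f$-cost at most $\exp(-\varepsilon F_\psi(Q_k)+o(F_\psi(Q_k)))$, which tends to $0$ since $F_\psi(Q_k)\to\infty$ by divergence of \eqref{khinchinseries}.

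For the divergence direction ($L_{f,\psi}>\eta_\greeks$, giving $\HH^f(\Badmod\psi\greeks)=\infty$), I would adapt the Cantor-type construction underlying the lower bound of Theorem \ref{theoremHDasymp} to operate at a variable scale. Fix $\eta'\in(\eta_\greeks,L_{f,\psi})$ and scales $Q_0<Q_1<\cdots$ with $\rho_k:=Q_k^{-\dimsum/\pdim}$ chosen so that $\log(f(\rho_k)/f_\ast(\rho_k))\ge\eta' F_\psi(Q_k)$ for all large $k$, and inductively build a nested family of $\rho_k$-cubes $\mathcal F_k\subset[0,1]^\dimprod$: obtain $\mathcal F_{k+1}$ from $\mathcal F_k$ by subdividing each mother-cube and retaining those sub-cubes avoiding the strips at levels $q\in(Q_k,Q_{k+1}]$. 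A quantitative local survival lemma, also borrowed from the proof of Theorem \ref{theoremHDasymp}, should yield a within-mother-cube survival ratio at least $(\rho_k/\rho_{k+1})^{\dimprod}\exp(-(\eta_\greeks+o(1))(F_\psi(Q_{k+1})-F_\psi(Q_k)))$, so that the natural probability measure $\mu$ on $K:=\bigcap_k\bigcup\mathcal F_k\subset\Badmod\psi\greeks$ satisfies $\mu(\ball{x}{\rho_k})\lesssim f(\rho_k)\exp(-(\eta'-\eta_\greeks)F_\psi(Q_k))=o(f(\rho_k))$. Extending from the construction scales to arbitrary $r>0$ via monotonicity of $f$ and applying the mass distribution principle gives $\HH^f(K)=\infty$.

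The main obstacle will be the divergence direction, specifically establishing the uniform within-cube survival ratio in the Cantor construction: the first-moment computation only delivers the expected survival in aggregate, while the inductive scheme requires that \emph{every} mother-cube inherit a proportional share of surviving children. This is exactly the ``effective primitive point'' lemma driving Theorem \ref{theoremHDasymp}, which I would need to repackage so that it applies uniformly across a growing range of levels $Q_k\to Q_{k+1}$ rather than a single fixed $\kappa$. A secondary difficulty is controlling $\mu(\ball{x}{r})$ for radii $r$ intermediate between the construction scales $\rho_k$; this should follow from the monotonicity of $f$ together with a geometric buffer built into the sub-cube selection, as in the analogous argument for $\Approx\psi$ in the proof of Theorem \ref{theoremJDV}.
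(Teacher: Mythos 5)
Your high-level architecture --- windows $[Q_k,Q_{k+1}]$ with per-window measure loss $\approx\eta_\greeks F_\psi(Q_k,Q_{k+1})$, a Cantor construction plus mass distribution for the divergence case, and a box-counting cover for the convergence case --- is indeed the skeleton of the paper's argument, which proves a unified Theorem \ref{maintheoremv2} by exactly this evaporation-rate scheme and then deduces Theorem \ref{maintheorem} by replacing $\psi$ with $\psi\wedge\multerror\psi_\ast$ and letting $\multerror\to 0$ (this truncation is how the sharp constant $\eta_\greeks$, rather than a pair $c_{\greeks}(M_\psi)\leq\eta_\greeks\leq C_{\greeks}(M_\psi)$, is reached). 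But there are two genuine gaps. First, you have the moments backwards in the convergence direction: to show that few cubes survive you need a \emph{lower} bound on $\lebesgue_\KK\big(\KK_\omega\cap W_\psi(Q_k,Q_{k+1})\big)$ for every surviving cube, and a first-moment (union-bound) computation only bounds this from \emph{above}. The matching lower bound is the hardest estimate in the paper: a quasi-independence-on-average bound on $\sum_{\rr'}\lebesgue_\KK\big(\Delta_\psi(\rr)\cap\Delta_\psi(\rr')\big)$ combined with Cauchy--Schwarz (Lemma \ref{lemmaestimates2} and the proof of Theorem \ref{theoremestimates}), and it recovers the constant $\eta_\greeks$ only when $F_\psi(Q_k,Q_{k+1})$ is small --- which is why the windows must be calibrated so that $F_\psi(Q^k,Q^{k+1})\approx\beta\to 0$ (Lemma \ref{lemmaNk}), a step absent from your sketch.

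Second, and this affects both directions, the ``uniform within-cube survival'' you flag cannot be borrowed from a proof of Theorem \ref{theoremHDasymp}: in the paper that theorem is itself a corollary of the same machinery, so there is no independent ``effective primitive point lemma'' to repackage. The local estimate is genuinely a statement about the lattice $\Lambda_\omega=g_{\delta\log(N^k)}u_{\pi(\omega)}\Lambda_\ast$ attached to each cube, and it can fail for cubes whose lattice has a short vector (such a cube may lose essentially all of its children). The paper's resolutions are dynamical: for the divergence direction, mixing of the diagonal flow (Howe--Moore) is used to discard an extra $\epsilon$-fraction of children at each stage so that every retained cube has $\Lambda_\omega$ in a fixed compact set $K_R$, making the localized first-moment bound uniform; for the convergence direction, one observes that a cube meeting $B_{\psi,Q_0}$ automatically satisfies $\minkowski(\Lambda_\omega)\gtrapprox\phi^{1/\pdim}(Q^k)$, which controls $\Irr(\Lambda_\omega)$ well enough (Claim \ref{claimtopconvergence}). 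Without a substitute for these mechanisms, neither of your inductive schemes closes.
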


\begin{remark*}
The divergence case of the Khinchin--Groshev theorem is a corollary of Theorem \ref{maintheorem}. Indeed, let $\psi:\N\to (0,\infty)$ be a function such that \eqref{khinchinseries} diverges but $\psi/\psi_\ast$ is nonincreasing and tends to zero. Since $\HH^{f_\ast}$ is Lebesgue measure and $L_{f_\ast,\psi} = 0$, Theorem \ref{maintheorem} says that the set $\Badmod\psi\norms$ has Lebesgue measure zero, or equivalently that the set $\Approxmod\psi\norms$ has full Lebesgue measure. If $\psi/\psi_\ast$ is nonincreasing but does not tend to zero, then $\Approxmod\psi\norms$ can be seen to have full measure via comparison with the function $\psi(q) = \psi_\ast(q)/\log(2\vee q)$.\Footnote{Here and hereafter $A\vee B$ denotes the maximum of two numbers $A$ and $B$, and $A\wedge B$ denotes their minimum.}
\end{remark*}

\begin{definition}
\label{definitionniceapproximation}
In what follows, we shall call a function $\psi:(0,\infty)\to (0,\infty)$ such that \eqref{khinchinseries} diverges but $\psi/\psi_\ast$ is nonincreasing a \emph{nice approximation function}.% If in addition $\psi(q)/\psi_\ast(q) \to 0$ as $q\to \infty$, we call $\psi$ a \emph{very nice approximation function}.
\end{definition}

%\begin{remark*}
%The difficulty of computing $\HH^f(\Bad\psi)$ in the ``indeterminate case'' is apparent even when $\psi = \kappa^{1/\pdim}\psi_\ast$ for some $\kappa > 0$; cf. the second Remark following Theorem \ref{theoremHDasymp}.
%\end{remark*}

To illustrate Theorem \ref{maintheorem}, we consider its special case for a certain family of nice approximation functions $(\psi_\multerror)_{\multerror > 0}$, and we find a ``dual'' family of dimension functions $(f_s)_{s > 0}$ which are useful for measuring the sets $(\Approx{\psi_\multerror})_{\multerror > 0}$:

\begin{example}
\label{examplefs}
For each $\multerror,s > 0$, let
\begin{align*}
\psi_\multerror(q) &= \multerror^{1/\pdim} q^{-\qdim/\pdim}\log(2\vee q)^{-1/\pdim},&
f_s(\rho) &= \rho^{\dimprod}|\log(\rho)|^s,
\end{align*}
we have
\begin{align*}
F_\psi(Q) &= \multerror\sum_{q = 1}^Q \log(2\vee q)^{-1} \frac{1}{q} \sim \multerror\log\log(Q)\\
\log\left(\frac{f_s(\rho)}{f_\ast(\rho)}\right) &= \log\left(|\log(\rho)|^s\right) = s\log|\log(\rho)|\\
L_{f_s,\psi_\multerror} &= \liminf_{\rho \to 0} \frac{s\log|\log(\rho)|}{\multerror \log\log(\rho^{-\pdim/(\dimsum)})} = \frac{s}{\multerror}\cdot
\end{align*}
So by Theorem \ref{maintheorem}, we have
\[
\HH^{f_s}(\Bad{\psi_\multerror})
= \begin{cases}
0 & \text{if } s < \multerror\eta_{\greeks}\\
\infty & \text{if } s > \multerror\eta_{\greeks}\\
\text{unknown} & \text{if } s = \multerror\eta_{\greeks}
\end{cases}.
\]
\end{example}

Example \ref{examplefs} reveals a surprising fact about the sets $\Bad\psi$, namely that there exist pairs $(f,\psi)$ and constants $0 < \multerror_- < \multerror_+$ such that $\HH^f(\Bad{\multerror_-\psi}) = \infty$ but $\HH^f(\Bad{\multerror_+\psi}) = 0$. This stands in marked contrast to Theorem \ref{theoremJDV}, which demonstrates that $\HH^f(\Approx{\multerror\psi})$ is independent of $\multerror$ for all functions $f$ and $\psi$ and for all $\multerror > 0$. Going further, we show that the phenomenon of the above example is in fact commonplace:

\begin{corollary}
\label{corollarysurprising}
For any nice approximation function $\psi:(0,\infty)\to (0,\infty)$ such that $\psi/\psi_\ast \to 0$, there exists a dimension function $f$ and constants $0 < \multerror_- < \multerror_+$ such that
\[
\HH^f(\Bad{\multerror_+\psi}) = 0 \text{ but } \HH^f(\Bad{\multerror_-\psi}) = \infty.
\]
\end{corollary}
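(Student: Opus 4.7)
The plan is to combine the scaling behaviour of $F_\psi$ under $\psi\mapsto\multerror\psi$ with a dimension function $f$ calibrated so that $L_{f,\psi}=1$; the corollary will then follow from Theorem \ref{maintheorem} applied to $\multerror_-\psi$ and $\multerror_+\psi$ for appropriate $\multerror_-<\multerror_+$.

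Directly from \eqref{Fpsidef1}, $F_{\multerror\psi}(Q)=\multerror^\pdim F_\psi(Q)$, and consequently
\[
L_{f,\multerror\psi}=\multerror^{-\pdim}\,L_{f,\psi}
\]
for any dimension function $f$ and any $\multerror>0$. Note also that $\multerror\psi$ is itself a nice approximation function with $\multerror\psi/\psi_\ast\to 0$, so Theorem \ref{maintheorem} applies to it.

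For the construction of $f$, let $\widetilde F_\psi$ denote the piecewise-linear interpolation of $F_\psi$ through the integer points, which is continuous and nondecreasing. Define
\[
f(\rho)\df\rho^{\dimprod}\exp\!\bigl(\widetilde F_\psi(\rho^{-\pdim/\dimsum})\bigr)
\]
on a right neighborhood $(0,\rho_0]$ of $0$, and extend $f$ arbitrarily to a nondecreasing function on $(0,\infty)$. The hypothesis $\psi/\psi_\ast\to 0$ is equivalent to $Q^\qdim\psi^\pdim(Q)\to 0$, and this gives everything we need: (i) it forces $q^{\qdim-1}\psi^\pdim(q)=o(1/q)$, so $F_\psi(Q)=o(\log Q)$ and $\widetilde F_\psi(Q)=F_\psi(Q)(1+o(1))$, which guarantees both $f(\rho)\to 0$ as $\rho\to 0$ and $L_{f,\psi}=1$; and (ii) the log-derivative satisfies
\[
\frac{d\log f(\rho)}{d\log\rho}=\dimprod-\frac{\pdim}{\dimsum}\,Q^{\qdim}\psi^{\pdim}(Q)+o(1)\quad\text{where }Q=\rho^{-\pdim/\dimsum},
\]
and hence tends to $\dimprod>0$ as $\rho\to 0$, so $f$ is nondecreasing on $(0,\rho_0]$.

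Given such an $f$, choose any $0<\multerror_-<\eta_\greeks^{-1/\pdim}<\multerror_+$; the scaling relation then gives $L_{f,\multerror_-\psi}>\eta_\greeks>L_{f,\multerror_+\psi}$, and Theorem \ref{maintheorem} yields $\HH^f(\Bad{\multerror_-\psi})=\infty$ and $\HH^f(\Bad{\multerror_+\psi})=0$, as required. The only step requiring care is the monotonicity of $f$ in (ii), which is precisely where the hypothesis $\psi/\psi_\ast\to 0$ is used quantitatively; the rest is the scaling identity plus a routine invocation of Theorem \ref{maintheorem}.
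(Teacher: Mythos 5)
Your proposal is correct and takes essentially the same route as the paper: the same dimension function $f(\rho)=\rho^{\dimprod}\exp F_\psi(\rho^{-\pdim/\dimsum})$ near $0$, the same scaling relation $L_{f,\multerror\psi}=\multerror^{-\pdim}L_{f,\psi}$, and the same choice of $\multerror_\pm$ straddling $\eta_\greeks^{-1/\pdim}$, followed by Theorem \ref{maintheorem}. Your extra care with the interpolation of $F_\psi$ and the log-derivative verification of monotonicity is a refinement of the paper's (terser) justification, not a different argument.
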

\begin{proof}
We define the dimension function $f$ via the formula
\[
f(\rho) =
\begin{cases}
\rho^\dimprod \exp F_\psi(\rho^{-\pdim/(\dimsum)}) & \rho \leq \rho_0\\
f(\rho_0) & \rho > \rho_0
\end{cases}
\]
where $\rho_0 > 0$ is a small constant. We know that $f$ is indeed a dimension function (i.e. is nondecreasing and satisfies $\lim_{\rho\to 0} f(\rho) = 0$) because $q^\qdim \psi^\pdim(q) \leq 1 < \qdim(\dimsum)$ for all $q$ sufficiently large, so if $\rho_0$ is small enough then $f$ is nondecreasing. This choice of $f$ guarantees that $L_{f,\multerror\psi} = \multerror^{-\pdim}$ for all $\multerror > 0$; choosing $0 < \multerror_- < \multerror_+$ so that $\multerror_+^{-\pdim} < \eta_\greeks < \multerror_-^{-\pdim}$ completes the proof.
\end{proof}

Corollary \ref{corollarysurprising} is less surprising than it first appears if we recall the contrast between the corollary of Dirichlet's theorem and the Jarn\'ik--Schmidt theorem: $\Approxmod{\psi_\ast}\norms = \MM$ (so that $\HD(\Badmod{\psi_\ast}\norms) = 0$), but $\HD(\BA) = \dimprod$ (so that $\HD(\Bad{\multerror\psi_\ast}) \to \dimprod$ as $\multerror \to 0$, and in particular $\HD(\Bad{\multerror\psi_\ast}) > 0$ for all sufficiently small $\multerror > 0$). Thus the conclusion of Corollary \ref{corollarysurprising} was already known to hold for one function, namely $\psi = \psi_\ast$.

Another surprising fact about \eqref{HfBpsi} is that it shows that $\HH^f(\Badmod\psi\greeks)$ depends on the norms $\|\cdot\|_\pgreek$ and $\|\cdot\|_\qgreek$ used to define $\Badmod\psi\greeks$. Actually, this is not a different fact than our previous surprising fact, since multiplying the $\|\cdot\|_\pgreek$ norm by a constant factor is equivalent to dividing $\psi$ by a constant factor; precisely,
\[
\Badmod{\multerror\psi}\greeks = \Badmod\psi{\pgreek',\qgreek} \text{ where }\|\cdot\|_{\pgreek'} = \|\cdot\|_\pgreek/\multerror.
\]
We conclude this introduction by unifying Theorems \ref{theoremHDasymp} and \ref{maintheorem} into a single theorem from which they can both be derived as corollaries.

%\begin{definition}
%\label{definitionniceapproximation}
%A function $\psi:\CO 1\infty\to(0,\infty)$ is a \emph{nice approximation function} if $\psi(1) \leq 1$ ($\psi(1) \leq 1/2$? cf. Lemma \ref{lemmaqr}\internal) and the function $q\mapsto q^\qdim \psi^\pdim(q)$ is nonincreasing, regular, and continuous. $\psi$ is \emph{very nice} if in addition $q\mapsto q^s \psi(q)$ is eventually monotone for all $s$. (This implies $\Irr(\phi^{(k)})\to 1$ as $k\to\infty$.)
%\end{definition}

\begin{theorem}
\label{maintheoremv2}
Let $f$ be a dimension function, and let $\psi:(0,\infty)\to (0,\infty)$ be a nice approximation function. Let $L_{f,\psi}$ be given by \eqref{Lfpsi}, and let $M_\psi = \psi^\pdim(1)$. Then there exist continuous functions $c_{\greeks},C_{\greeks}:\Rplus\to [0,\infty]$, depending only on $m$, $n$, $\firstgreek$, and $\secondgreek$, such that
\begin{equation}
\label{HfBpsiv2}
\HH^f(\Badmod\psi\greeks) = \begin{cases}
0 & \text{if } L_{f,\psi} < c_{\greeks}(M_\psi)\\
\infty & \text{if } L_{f,\psi} > C_{\greeks}(M_\psi)\\
\text{unknown} & \text{if } c_{\greeks}(M_\psi) \leq L_{f,\psi} \leq C_{\greeks}(M_\psi)
\end{cases}
\end{equation}
AND
\[
c_{\greeks}(0) = C_{\greeks}(0) = \eta_{\greeks}.
\]
\end{theorem}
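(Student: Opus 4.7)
The plan is to prove Theorem~\ref{maintheoremv2} by a combinatorial/Cantor-construction argument, establishing matching threshold-type statements in the convergence and divergence directions. Both thresholds will be manufactured as continuous functions of $M_\psi = \psi^\pdim(1)$, and both will reduce to $\eta_\greeks$ at $M_\psi = 0$, reflecting the fact that $M_\psi$ controls only the initial, small-$q$ stages of the construction; their contribution becomes negligible compared to the tail sum $F_\psi(Q)$ as $Q\to\infty$.

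\textbf{Upper bound (convergence case).} To show $\HH^f(\Badmod\psi\greeks)=0$ when $L_{f,\psi}<c_\greeks(M_\psi)$, I would cover $\Badmod\psi\greeks\cap[0,1]^{\dimprod}$ at scale $\rho_Q:=Q^{-\dimsum/\pdim}$ by those $\rho_Q$-cubes disjoint from the $\psi(\|\qq\|_\qgreek)$-neighborhood (in $\|\cdot\|_\pgreek$) of every rational hyperplane $\{A:A\qq=\pp\}$ with $\|\qq\|_\qgreek\leq Q$. Two ingredients produce the sharp constant: a simplex/separation lemma ensuring that the neighborhoods indexed by distinct primitive pairs $\pm(\pp,\qq)$ with $\|\qq\|_\qgreek=q$ are essentially disjoint at the relevant scale, and a primitive-pair count whose asymptotic produces the factor $\qdim\Vgreeks/(2\zeta(\dimsum))=\eta_\greeks$. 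Together these yield that the fraction of $[0,1]^\dimprod$ removed at level $q$ is asymptotic to $\eta_\greeks q^{\qdim-1}\psi^\pdim(q)$, so taking logarithms and summing gives
\[
\log\#\{\text{surviving }\rho_Q\text{-cubes}\} \leq \dimprod\log(\rho_Q^{-1}) - \eta_\greeks F_\psi(Q) + E(M_\psi),
\]
where the ``seed error'' $E(M_\psi)$ depends only on the finitely many small-$q$ stages. Multiplying by $f(\rho_Q)$ and invoking \eqref{Lfpsi} locates a continuous threshold $c_\greeks(M_\psi)$ with $c_\greeks(0)=\eta_\greeks$.

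\textbf{Lower bound (divergence case).} To show $\HH^f(\Badmod\psi\greeks)=\infty$ when $L_{f,\psi}>C_\greeks(M_\psi)$, I would construct a Cantor-like set $E\subseteq\Badmod\psi\greeks$ iteratively: at stage $k$ tied to a geometrically increasing scale $Q_k$, refine each surviving $\rho_{k-1}$-cube into $\rho_k$-cubes and discard those meeting the $\multerror\psi(\|\qq\|_\qgreek)$-neighborhood of a rational hyperplane $\{A\qq=\pp\}$ with $Q_{k-1}<\|\qq\|_\qgreek\leq Q_k$, for a fixed $\multerror>1$ large enough to ensure the limit points lie in $\Badmod\psi\greeks$. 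The same counting as in the upper bound now delivers a matching lower bound on the number of surviving children; distributing unit mass uniformly on these survivors produces a Frostman-type measure on $E$, and the mass distribution principle gives $\HH^f(E)=\infty$ whenever $L_{f,\psi}>C_\greeks(M_\psi)$. The extra $\multerror$-factor introduces only a bounded additive error to the exponent, which together with the seed-error $E(M_\psi)$ above is dominated by $F_\psi(Q)\to\infty$ in the limit $M_\psi\to 0$, yielding $C_\greeks(0)=\eta_\greeks$.

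\textbf{Main obstacle.} The crux is obtaining the \emph{sharp} constant $\eta_\greeks$ in both directions, which is what improves on the near-sharp bounds of Kurzweil, Hensley, Weil, and Broderick--Kleinbock. Schmidt-game arguments inherently discard a constant fraction of volume at each step and cannot produce the correct leading constant, so a different approach is essential. The main technical labor lies in (i) proving the simplex/separation statement sharply (not merely up to a constant factor), and (ii) establishing the primitive-lattice-point asymptotic uniformly in $q$ with error small enough to survive the telescoping product over $q\leq Q$, with the correct constant $\qdim\Vgreeks/(2\zeta(\dimsum))$. Continuity of $c_\greeks, C_\greeks$ and the collapse at zero then follow by carefully tracking how $E(M_\psi)$ depends on $M_\psi$.
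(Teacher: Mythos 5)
Your overall architecture (Cantor construction at scales $\rho_k \approx Q_k^{-\dimsum/\pdim}$, evaporation-rate bookkeeping, mass distribution principle, thresholds depending continuously on $M_\psi$ and collapsing to $\eta_\greeks$) matches the paper's. However, the two places you flag as "the main technical labor" are exactly where the proposal as written would fail, and the fixes are not refinements of your stated ingredients but different mechanisms.

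First, there is no "simplex/separation lemma" making the neighborhoods $\Delta_\psi(\pp,\qq)$ for distinct primitive pairs essentially disjoint when both $\pdim,\qdim$ are arbitrary: these are $\psi$-neighborhoods of codimension-$\pdim$ affine subspaces of $\MM\cong\R^\dimprod$, and distinct ones genuinely intersect (the paper's introduction notes that Simplex-Lemma arguments are only available when $\pdim=1$ or $\qdim=1$). The sharp constant in the convergence direction is instead obtained from a second-moment computation: a "quasi-independence on average" bound on $\sum_{\rr'}\lebesgue_\KK(\Delta_\psi(\rr)\cap\Delta_\psi(\rr'))$ (Lemma \ref{lemmaestimates2}), combined with the Cauchy--Schwarz inequality $\|g\|_1\geq\|f\|_1^2/\|f\|_2^2$ to convert first- and second-moment estimates into a lower bound on the measure of the union. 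Even this requires restricting to lattice vectors of bounded "irregularity" and showing the irregular ones contribute negligibly (Lemma \ref{lemmaregular}); without some substitute for this, your removed-fraction asymptotic $\sim\eta_\greeks q^{\qdim-1}\psi^\pdim(q)$ has no proof in the direction you need it (a lower bound on the removed measure).

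Second, your divergence argument asserts that "the same counting delivers a matching lower bound on the number of surviving children" in every surviving cell, but the count inside a cell $\KK_\omega$ at stage $k$ is \emph{not} uniform over $\omega$: after rescaling, it is a statement about the lattice $g_{\delta\log N^k}u_{\pi(\omega)}\Z^\Dimsum$, and cells whose centers are unusually well approximable at scale $Q^k$ (degenerate lattices) lose far more than an $\eta_\greeks\beta$-fraction of children. The paper resolves this by running the construction inside the homogeneous space $\Omega_\Dimsum$: mixing of the diagonal flow (Howe--Moore) shows that discarding an additional $\epsilon$-fraction of children at each stage keeps the associated lattice in a fixed compact set $K_R$, and only then does the local measure estimate apply uniformly. (The convergence direction needs a separate, easier argument that cells meeting $B_{\psi,Q_0}$ automatically have controlled irregularity.) Without this dynamical input, or an equivalent uniformization, the Frostman measure you propose cannot be shown to satisfy the required ball estimate.
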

\begin{proof}[Proof of Theorem \ref{theoremHDasymp} assuming Theorem \ref{maintheoremv2}]
Fix $s,\kappa > 0$, and let $f_s(\rho) = \rho^{\dimprod - s}$, $\psi_\kappa(q) = \kappa^{1/\pdim}\psi_\ast(q)$. Then
\begin{align*}
M_\psi &= \kappa&
F_\psi(Q) &= \sum_{q = 1}^Q \frac{\kappa}{q} \sim \kappa\log(Q)\\
\log\left(\frac{f(\rho)}{f_\ast(\rho)}\right) &= s|\log(\rho)|&
L_{f,\psi} &= \liminf_{\rho \to 0} \frac{s|\log(\rho)|}{\kappa\log(\rho^{-\pdim/(\dimsum)})} = \frac{s}{\kappa}\frac{\dimsum}{\pdim}
\end{align*}
and thus
\[
\HH^{\dimprod - s}(\Badmod\kappa\greeks)
= \HH^{f_s}(\Badmod{\psi_\kappa}\greeks)
= \begin{cases}
0 & \text{if } s < \kappa \frac{\pdim}{\dimsum} c_{\greeks}(\kappa)\\
\infty & \text{if } s > \kappa \frac{\pdim}{\dimsum} C_{\greeks}(\kappa)\\
\text{unknown} & \text{if } \kappa \frac{\pdim}{\dimsum} c_{\greeks}(\kappa) \leq s \leq \kappa \frac{\pdim}{\dimsum} C_{\greeks}(\kappa)
\end{cases}.
\]
It follows that
\[
\kappa \frac{\pdim}{\dimsum} c_{\greeks}(\kappa) \leq \dimprod - \HD(\Badmod\kappa\greeks) \leq \kappa \frac{\pdim}{\dimsum} C_{\greeks}(\kappa)
\]
and hence by the squeeze theorem,
\[
\lim_{\kappa\to 0} \frac{\dimprod - \HD(\Badmod\kappa\greeks)}{\kappa} = \frac{\pdim}{\dimsum} \eta_{\greeks} = \theta_{\greeks}.
\qedhere\]
\end{proof}
\begin{proof}[Proof of Theorem \ref{maintheorem} assuming Theorem \ref{maintheoremv2}]
Suppose that $\psi/\psi_\ast \to 0$. Then for each $\multerror > 0$, the function $\psi_\multerror = \psi\wedge\multerror\psi_\ast$ agrees with $\psi$ for all $q$ sufficiently large. It follows that $\Bad{\psi_\multerror} = \Bad\psi$. Taking the limit of $\eqref{HfBpsiv2}_{\psi = \psi_\multerror}$ as $\multerror\to 0$ yields \eqref{HfBpsi}.
\end{proof}

%{\bf Outline.} In Section \ref{sectionheuristic} we will give a heuristic argument for Theorem \ref{maintheoremv2}, using the \emph{lower box-counting $f$-measure} we introduce in Section \ref{sectionHD} as a proxy for the Hausdorff $f$-measure. This heuristic argument will motivate the rigorous proof of Theorem \ref{maintheoremv2}, which will occupy Sections \ref{sectionpreliminaries}-\ref{sectionharddirection}. Section \ref{sectionpreliminaries} contains notation and 

The remainder of the paper is devoted to giving first a heuristic argument for Theorem \ref{maintheoremv2}, and then a rigorous proof motivated by the heuristic argument.

{\bf Note.} To avoid needlessly cluttering the notation, in what follows we usually omit the subscripts of norms, writing $\|\pp\|$ for $\|\pp\|_\pgreek$, $\|\qq\|$ for $\|\qq\|_\qgreek$, etc. When it is important to clarify or emphasize which norm is being used, we will do so.\\

{\bf Acknowledgements.} The author would like to thank Faustin Adiceam for helpful comments on an earlier version of this paper. The author was supported in part by the EPSRC Programme Grant EP/J018260/1. The author thanks the anonymous referee for many helpful comments.

\tableofcontents

\section{Hausdorff measure and dimension}
\label{sectionHD}
A function $f:(0,\infty)\to (0,\infty)$ is called a \emph{dimension function} if $f$ is nondecreasing and $\lim_{\rho \to 0} f(\rho) = 0$.  In this section, we fix a dimension function $f$ and a set $S\subset \R^\generic$. The \emph{Hausdorff $f$-measure} of a set $S\subset\R^\generic$ is defined by the formula
\[
\HH^f(S) = \lim_{\epsilon\to 0}\inf\left\{\sum_{i = 1}^\infty f(\rho_i): \text{$\big(\ball{x_i}{\rho_i}\big)_1^\infty$ is a countable cover of $S$ with $\rho_i\leq\epsilon \all i$}\right\},
\]
where here and hereafter, $\ball x\rho = \ballvar x\rho \df \{y : \dist(x,y) \leq \rho\}$. Finally, the \emph{Hausdorff dimension} of $S$ is given by the formula
\[
\HD(S) = \inf\{s\geq 0 : \HH^s(S) = 0\},
\]
where $\HH^s$ is the Hausdorff measure with respect to the dimension function
\[
f_s(\rho) = \rho^s.
\]
We recall some well-known upper and lower bounds on the quantities $\HH^f(S)$ and $\HD(S)$. The upper bounds come from considering only covers whose balls all have the same radius. Such a cover is represented by two quantities: the set $F$ of centers of balls in the cover, and the common radius $\rho$. The cost of such a cover is equal to $\#(F) f(\rho)$. It follows that
\[
\HH^f(S) \leq \underline\BB^f(S) \df \liminf_{\rho\to 0} \boxc_\rho(S) f(\rho),
\]
where
\begin{equation}
\label{boxcdef}
\boxc_\rho(S) = \min\left\{\#(F) : F\subset \R^\generic, \; S\subset \bigcup_{x\in F} \ball x\rho\right\}.
\end{equation}
We call the number $\underline\BB^f(S)$ the \emph{lower box-counting $f$-measure}\Footnote{Warning: Although we are calling $\underline\BB^f$ a ``measure'', it is not countably additive on any reasonable collection of sets, although it is finitely additive on sets $A,B$ such that $\dist(A,B) > 0$.} of $S$, in analogy with the \emph{lower box-counting dimension}, which is given by the formula
\[
\underline\BD(S) = \liminf_{\rho \to 0} \frac{\log \boxc_\rho(S)}{-\log(\rho)} = \inf\{s\geq 0 : \underline\BB^s(S) = 0\}.
\]
Note that
\[
\HD(S) \leq \underline\BD(S).
\]
To get a lower bound on $\HH^f(S)$ and $\HD(S)$, a more sophisticated argument is needed.
\begin{lemma}[Generalized mass distribution principle]
\label{lemmamassdistribution}
Let $\mu$ be a probability measure on $S$ such that for all $\xx\in\R^\generic$ and $\rho > 0$,
\[
\mu(\ball \xx\rho) \leq g(\rho),
\]
where $g$ is a dimension function. Then
\[
\HH^f(S) \geq \liminf_{\rho\to 0} \frac{f(\rho)}{g(\rho)}\cdot
\]
\end{lemma}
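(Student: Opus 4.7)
The plan is to mirror the standard mass distribution principle, replacing the power function $\rho^s$ with the arbitrary dimension function $g$ and tracking the extra weight $f/g$.

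First, I would fix $\epsilon > 0$ and let $\bigl(\ball{x_i}{\rho_i}\bigr)_{i=1}^\infty$ be an arbitrary countable cover of $S$ with $\rho_i \leq \epsilon$ for all $i$. Without loss of generality every ball in the cover meets $S$ (otherwise discard it), so the hypothesis $\mu(\ball{x}{\rho}) \leq g(\rho)$ yields
\[
1 = \mu(S) \leq \sum_{i=1}^\infty \mu(\ball{x_i}{\rho_i}) \leq \sum_{i=1}^\infty g(\rho_i).
\]
This is the only place the measure and the hypothesis enter.

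Next I would convert this bound on $\sum_i g(\rho_i)$ into a bound on $\sum_i f(\rho_i)$ by a pointwise comparison. Set
\[
L_\epsilon \df \inf_{0 < \rho \leq \epsilon} \frac{f(\rho)}{g(\rho)},
\]
so that $f(\rho_i) \geq L_\epsilon \, g(\rho_i)$ for every $i$ (taking $L_\epsilon = 0$ if $g$ ever vanishes, in which case the inequality is trivial). Summing and using the previous display gives
\[
\sum_{i=1}^\infty f(\rho_i) \geq L_\epsilon \sum_{i=1}^\infty g(\rho_i) \geq L_\epsilon.
\]
Since this lower bound is independent of the chosen cover, taking the infimum over all admissible $\epsilon$-covers yields an intermediate Hausdorff premeasure bound of at least $L_\epsilon$.

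Finally I would let $\epsilon \to 0$. By definition of $\HH^f$, the infimum over $\epsilon$-covers increases as $\epsilon$ decreases, and its limit is $\HH^f(S)$; on the other side, $L_\epsilon$ is nondecreasing in $\epsilon \to 0$ with limit $\liminf_{\rho \to 0} f(\rho)/g(\rho)$. Combining these gives the desired inequality. There is no real obstacle here: the argument is completely routine once one notices that the correct multiplicative factor to insert at the pointwise step is the infimum $L_\epsilon$ rather than any single value of $f/g$, which is what makes the final limit a liminf rather than a pointwise quantity.
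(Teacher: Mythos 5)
Your argument is correct and is essentially identical to the paper's proof: both use the cover to get $1 \leq \sum_i g(\rho_i)$ and then convert to $\sum_i f(\rho_i)$ via the infimum of $f/g$ over $(0,\epsilon]$ (the paper phrases it as the supremum of $g/f$, which is the same thing), before letting $\epsilon \to 0$. No gaps.
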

\begin{proof}
This can be deduced as a corollary of \cite[Theorem 8.2]{MSU}, but the proof is much easier. If $(\ball{\xx_i}{\rho_i})_1^\infty$ is a cover of $S$ such that $\rho_i \leq \epsilon \all i$, then
\[
1 = \mu(S) \leq \sum_{i = 1}^\infty \mu(\ball{\xx_i}{\rho_i}) \leq \sum_{i = 1}^\infty g(\rho_i) \leq \left(\sup_{0 < \rho \leq \epsilon} \frac{g(\rho)}{f(\rho)}\right) \sum_{i = 1}^\infty f(\rho_i)
\]
and thus
\[
\HH^f(S) \geq \lim_{\epsilon\to 0} \inf_{0 < \rho\leq \epsilon} \frac{f(\rho)}{g(\rho)} = \liminf_{\rho\to 0} \frac{f(\rho)}{g(\rho)}\cdot
\qedhere\]
\end{proof}

\draftnewpage
\section{Heuristic argument}
\label{sectionheuristic}
In this section we give a heuristic argument explaining where the formulas \eqref{thetamn} and \eqref{HfBpsi} come from. Let $f$ be a dimension function, and let $\psi$ be a nice approximation function (cf. Definition \ref{definitionniceapproximation}). The reader can keep in mind the special case $f(\rho) = \rho^s$, $\psi(q) = \kappa^{1/\pdim} \psi_\ast(q) = (\kappa q^{-\qdim})^{1/\pdim}$, where $0 < s < \dimprod$ and $\kappa > 0$, i.e. the special case needed to prove Theorem \ref{theoremHDasymp}.% Note that for this value of $\psi$, we have $F_\psi(Q_1,Q_2) = \kappa\log(Q_2/Q_1)$ for all $1 \leq Q_1 \leq Q_2$.

In this section, the notation $A\sim B$ indicates that the ratio $A/B$ is asymptotically close to 1, but we do not specify the circumstances under which this asymptotic holds. Similarly, $A\approx B$ indicates that the ratio $A/B$ is bounded from above and below, but again the contextual requirements are unspecified. A more rigorous way of denoting asymptotic relations will be described at the beginning of Section \ref{sectionpreliminaries}.

The first step in our calculation is to rewrite the set $\Bad\psi$ as the union of countably many closed sets. For each $Q_0\in\N$, let
\begin{align}
\label{BpsiQdef}
W_{\psi,Q_0} &\df \bigcup_{\substack{(\pp,\qq)\in\Z^\dimsum \\ \|\qq\|\geq Q_0}} \Delta_\psi(\pp,\qq),&
B_{\psi,Q_0} &\df \MM\butnot W_{\psi,Q_0},
\end{align}
where for each $\pp\in\R^\pdim$ and $\qq\in\R^\qdim\butnot\{\0\}$ we have
\begin{equation}
\label{Deltapsidef}
\Delta_\psi(\pp,\qq) = \left\{A\in \MM: \|A\qq - \pp\| \leq \psi(\|\qq\|)\right\}.
\end{equation}
Then we have
\[
\Bad\psi = \bigcup_{Q_0 = 1}^\infty B_{\psi,Q_0},
\]
so
\begin{equation}
\label{supoverQ0}
\HH^f(\Bad\psi) = \sup_{Q_0\geq 1} \HH^f(B_{\psi,Q_0}) = \infty\cdot\sup_{Q_0\geq 1} \HH^f(B_{\psi,Q_0}\cap\KK),\Footnote{Here we use the convention that $\infty\cdot 0 = 0$.}
\end{equation}
where $\KK$ denotes the set of $\pdim\times\qdim$ matrices whose entries are all in $[0,1]$, i.e. the ``unit cube'' of $\MM$.

Fix $Q_0\in\N$, and we will estimate $\HH^f(B_{\psi,Q_0}\cap\KK)$. We will use the lower box-counting measure $\underline\BB^f(B_{\psi,Q_0}\cap\KK)$ (cf. Section \ref{sectionHD}) as a proxy for $\HH^f(B_{\psi,Q_0}\cap\KK)$. Intuitively, $\underline\BB^f(B_{\psi,Q_0}\cap\KK)$ is a good proxy for $\HH^f(B_{\psi,Q_0}\cap\KK)$ as long as the closed set $B_{\psi,Q_0}\cap\KK$ is sufficiently homogeneous.

\begin{heur}
\label{heur1}
For all sufficiently large $Q_0\in\N$, we have $\HH^f(B_{\psi,Q_0}\cap\KK) \approx \underline\BB^f(B_{\psi,Q_0}\cap\KK)$.
\end{heur}

Now to compute $\underline\BB^f(B_{\psi,Q_0}\cap \KK)$, we should fix $\rho > 0$ and estimate the number $\boxc_\rho(B_{\psi,Q_0}\cap \KK)$ defined in \eqref{boxcdef}. To do this, we will estimate the measure of the set $\thickvar{B_{\psi,Q_0}}\rho\cap\KK$ in order to apply the formula
\[
\boxc_\rho(B_{\psi,Q_0}\cap \KK) \approx \rho^{-\dimprod} \lebesgue_\KK\big(\thickvar{B_{\psi,Q_0}}\rho\big).
\]
Here $\lebesgue_\KK$ denotes Lebesgue measure on $\KK$, normalized to be a probability measure, and
\[
\thickvar S\rho \df \{A\in\MM : \dist(A,S) \leq \rho\}.
\]
(The distance on $\MM$ comes from the operator norm $\|A\| = \max_{\|\qq\|_\qgreek = 1} \|A\qq\|_\pgreek$.) Fix $Q\geq Q_0$ to be determined, and let
\begin{align}
\label{BpsiQ0Qdef}
W_\psi(Q_0,Q) &= \bigcup_{\substack{(\pp,\qq)\in\Z^\dimsum \\ Q_0 \leq \|\qq\| \leq Q}} \Delta_\psi(\pp,\qq),&
B_\psi(Q_0,Q) &= \MM\butnot W_\psi(Q_0,Q).
\end{align}
We will estimate the measure of $B_\psi(Q_0,Q)$ using an independence assumption, and then we will compare the measure of $B_\psi(Q_0,Q)$ with the measure of $\thickvar{B_{\psi,Q_0}}\rho$, assuming an appropriate relation between $Q$ and $\rho$. However, we must deal with the fact that if $(\pp,\qq)\in\Z^\dimsum$ and $k\in\Z\butnot\{0\}$, then $\Delta_\psi(k\pp,k\qq) \subset \Delta_\psi(\pp,\qq)$, and in particular the sets $\Delta_\psi(\pp,\qq)$ and $\Delta_\psi(k\pp,k\qq)$ are not independent. Let $\PP\subset\Z^\pdim\times(\Z^\qdim\butnot\{\0\})$ be a set consisting of primitive integer vectors such that for all $(\pp,\qq)\in\Z^\pdim\times(\Z^\qdim\butnot\{\0\})$, we have $(\pp,\qq)\in k \PP$ for exactly one value $k\in\Z\butnot\{0\}$.
\begin{heur}
\label{heur2}
The sets $\Delta_\psi(\pp,\qq)$ ($(\pp,\qq)\in \PP$, $Q_0\leq \|\qq\|\leq Q$) are approximately independent with respect to the probability measure $\lebesgue_\KK$.
\end{heur}
Using this assumption, we make the following calculation:
\begin{equation*}
%\label{heuristic1}
\begin{split}
-\log\lebesgue_\KK\big(B_\psi(Q_0,Q)\big) &\sim \sum_{\substack{(\pp,\qq)\in \PP \\ Q_0 \leq \|\qq\| \leq Q}} -\log\big(1 - \lebesgue_\KK(\Delta_\psi(\pp,\qq))\big)\\
&\sim \sum_{\substack{(\pp,\qq)\in \PP \\ Q_0 \leq \|\qq\| \leq Q}} \lebesgue_\KK\big(\Delta_\psi(\pp,\qq)\big).
\end{split}
\end{equation*}
\begin{heur}
The set $\PP$ is ``homogeneous'' in the sense that sums over subsets of $\PP$ are asymptotic to a fixed constant, say $\multerror_\PP$, times integrals over the same regions.
\end{heur}
To compute $\multerror_\PP$, we observe that for $R$ large,
\begin{align*}
\Vgreeks R^\dimsum &\sim \#\{(\pp,\qq) \in \Z^\pdim \times (\Z^\qdim \butnot \{\0\}): \|\pp\|\vee\|\qq\| \leq R\}\\
&= \sum_{k\in\Z\butnot\{0\}} \#\{(\pp,\qq)\in \PP : \|k\pp\|\vee\|k\qq\| \leq R\}\\
&\sim \multerror_\PP \sum_{k\in\Z\butnot\{0\}} \Vgreeks (R/|k|)^\dimsum = 2\zeta(\dimsum) \multerror_\PP \Vgreeks R^\dimsum,
\end{align*}
so $\multerror_\PP = 1/(2\zeta(\dimsum))$. Thus,
\begin{align*}
&-\log\lebesgue_\KK\big(B_\psi(Q_0,Q)\big)\\
&\sim \frac{1}{2\zeta(\dimsum)} \int_{\substack{(\pp,\qq)\in\R^{\dimsum} \\ Q_0 \leq \|\qq\| \leq Q}} \lebesgue_\KK\big(\Delta_\psi(\pp,\qq)\big) \;\dee(\pp,\qq)\\
&= \frac{1}{2\zeta(\dimsum)} \int_{Q_0 \leq \|\qq\| \leq Q} \int_\KK \lebesgue_{\R^\pdim}\big(\big\{\pp\in\R^\pdim : \|A\qq - \pp\| \leq \psi(\|\qq\|)\big\}\big) \;\dee A \;\dee\qq\\
&= \frac{1}{2\zeta(\dimsum)} \int_{Q_0 \leq \|\qq\| \leq Q} V_\pgreek \psi^\pdim(\|\qq\|) \;\dee\qq.
\end{align*}
%We can calculate the quantities $\lebesgue_\KK(\Delta_\psi(\qq))$ ($\qq\in \Z^\qdim$) in a convenient way by using the theory of Haar measures. For each $d\in\N$, let $\T^\generic = \R^\generic/\Z^\generic$ denote the $d$-dimensional torus, and let $\Phi_\qq:\T^{\dimprod}\to\T^\pdim$ be the map
%\[
%\Phi_\qq([A]) = [A\qq].
%\]
%Since $\Phi_\qq$ is a surjective homomorphism, it takes Haar measure on $\T^\dimprod$ to Haar measure on $\T^\pdim$. But Haar measure on $\T^\dimprod$ is precisely the image of $\lebesgue_\KK$ under the projection map $\R^\dimprod\ni A\mapsto [A]\in\T^\dimprod$. Thus
%\begin{equation}
%\label{heuristic2}
%\lebesgue_\KK(\Delta_\psi(\qq)) = \haar_{\T^\pdim}(\Bad\pgreek([\0],\psi(\|\qq\|))) = V_\pgreek \psi^\pdim(\|\qq\|),
%\end{equation}
%the last equality holding for all $\qq$ large enough so that $\psi(\|\qq\|) \leq 1/2$.
%
%Plugging \eqref{heuristic2} into \eqref{heuristic1} yields
%\begin{align*}
%-\log\lebesgue_\KK\big(B_\psi(Q_0,Q)\big)
%&\sim \sum_{\substack{(\pp,\qq)\in \PP \\ Q_0 \leq \|\qq\| \leq Q}} V_\pgreek \psi^\pdim(\|\qq\|)\\
%&\sim V_\pgreek \int_{B_\qgreek(\0,Q_2)\butnot B_\qgreek(\0,Q_1)} \psi^\pdim(\|\qq\|)\;\dee\qq.
%\end{align*}
\begin{lemma}
\label{lemmasphericalcoordinates}
If $f:\Rplus\to\Rplus$ is locally integrable, then
\[
\int f(\|\qq\|) \;\dee\qq = \qdim V_\qgreek \int_0^\infty q^{\qdim - 1} f(q) \;\dee q.
\]
\end{lemma}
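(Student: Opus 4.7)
The plan is the classical ``integration in polar coordinates adapted to a norm'' argument: express the left-hand side as an integral against the pushforward of Lebesgue measure under the map $\qq\mapsto\|\qq\|_\qgreek$, and then identify that pushforward explicitly using the homogeneity of the norm.

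More precisely, I would first reduce to the case $f\geq 0$ by splitting into positive and negative parts (since the lemma already states $f:\Rplus\to\Rplus$, this is essentially a formality, but it justifies the monotone-convergence step below). Next, define the pushforward measure $\mu$ on $[0,\infty)$ by
\[
\mu(E) \df \lebesgue\big(\{\qq\in\R^\qdim:\|\qq\|\in E\}\big),
\]
so that by the change-of-variables formula for pushforwards,
\[
\int f(\|\qq\|)\,\dee\qq = \int_0^\infty f(r)\,\dee\mu(r).
\]
The core computation is then the distribution function of $\mu$: because any norm is positively homogeneous of degree one, $\{\qq:\|\qq\|\leq R\} = R\cdot\{\qq:\|\qq\|\leq 1\}$, whence the scaling property of $\lebesgue$ together with the definition of $V_\qgreek$ as the $\lebesgue$-volume of the unit $\|\cdot\|_\qgreek$-ball gives
\[
\mu\big([0,R]\big) = V_\qgreek R^\qdim.
\]
Differentiating in $R$ shows that on $(0,\infty)$ the measure $\mu$ is absolutely continuous with density $\qdim V_\qgreek\, r^{\qdim-1}$, and $\mu(\{0\}) = 0$; substituting this density into the previous display yields the claimed identity. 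For general (non-negative) locally integrable $f$ the identity now extends by monotone convergence from simple functions.

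There is no serious obstacle here; every step is forced by the homogeneity of $\|\cdot\|_\qgreek$ together with standard measure theory. The only care needed is to ensure the scaling identity $\mu([0,R]) = V_\qgreek R^\qdim$ is proved from the abstract homogeneity of the norm rather than from any explicit formula for $\|\cdot\|_\qgreek$, so that the argument applies uniformly to the arbitrary norms considered in the paper.
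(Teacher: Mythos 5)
Your argument is correct and is essentially the paper's proof in slightly more formal dress: the paper also reduces to indicator functions $f(q)=\big[q\leq Q\big]$ and verifies the identity via $\lebesgue_{\R^\qdim}\big(B_\qgreek(\0,Q)\big)=V_\qgreek Q^\qdim=V_\qgreek\int_0^Q \qdim q^{\qdim-1}\,\dee q$, which is exactly your computation of the distribution function of the pushforward measure. No issues.
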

\begin{proof}
It suffices to prove the equality for functions of the form $f(q) = \big[q \leq Q\big]$ (cf. Convention \ref{conventioniverson} below), and for these,
\[
\int \big[\|\qq\| \leq Q\big] \;\dee\qq = \lebesgue_{\R^\qdim}\big(B_\qgreek(\0,Q)\big) = V_\qgreek Q^\qdim = V_\qgreek \int_0^Q \qdim q^{\qdim - 1} \;\dee q.% = \qdim V_\qgreek \int_0^\infty q^{n - 1} \big[q \leq Q\big] \;\dee q.
\qedhere\]
\end{proof}
\noindent Using Lemma \ref{lemmasphericalcoordinates}, we continue the calculation:
\begin{equation}
\label{FpsiQheur}
\begin{split}
-\log\lebesgue_\KK\big(B_\psi(Q_0,Q)\big)
&\sim \frac{V_\pgreek}{2\zeta(\dimsum)} \int_{Q_0 \leq \|\qq\| \leq Q} \psi^\pdim(\|\qq\|) \;\dee\qq\\
&= \frac{\qdim \Vgreeks}{2\zeta(\dimsum)} \int_{Q_0}^Q q^{\qdim - 1} \psi^\pdim(q) \;\dee q\\
&\sim \eta_\greeks F_\psi(Q),
\end{split}
\end{equation}
where $F_\psi$ is as in \eqref{Fpsidef1}.

Now that we have estimated the measure of $B_\psi(Q_0,Q)$, we want to compare it to the measure of $\thickvar{B_{\psi,Q_0}}\rho$. To do this, we compute the ``thickness'' of the sets $\Delta_\psi(\pp,\qq)$ ($(\pp,\qq)\in \PP$). The following lemma (which will be used in the real proof of Theorem \ref{maintheoremv2}) suggests that the thickness should be interpreted as being $\Psi(\|\qq\|) \df \psi(\|\qq\|)/\|\qq\|$:
\begin{lemma}
\label{lemmathickness}
Let $\psi_1,\psi_2:\N\to (0,\infty)$. Then for all $\pp \in \R^\pdim$ and $\qq \in \R^\qdim \butnot \{\0\}$,
\[
\NN\left(\Delta_{\psi_1}(\pp,\qq),\frac{\psi_2(\|\qq\|)}{\|\qq\|}\right) \subset \Delta_{\psi_1 + \psi_2}(\pp,\qq).
\]
\end{lemma}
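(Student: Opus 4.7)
The plan is to unpack the definitions and apply the triangle inequality together with the operator norm estimate. Let me denote the thickening operator $\NN(S,\rho)$ as $\{A\in\MM : \dist(A,S)\leq \rho\}$, where the distance comes from the operator norm $\|A\|_\op = \max_{\|\qq\|_\qgreek = 1}\|A\qq\|_\pgreek$, consistent with how the paper treats the metric on $\MM$ just before the statement.

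Suppose $A\in\MM$ satisfies $\dist(A,\Delta_{\psi_1}(\pp,\qq))\leq \psi_2(\|\qq\|)/\|\qq\|$. By definition of distance, I can choose $A'\in\Delta_{\psi_1}(\pp,\qq)$ with $\|A - A'\|_\op \leq \psi_2(\|\qq\|)/\|\qq\|$. The defining property of $\Delta_{\psi_1}(\pp,\qq)$ gives $\|A'\qq - \pp\|_\pgreek \leq \psi_1(\|\qq\|)$, while the operator-norm inequality gives
\[
\|(A - A')\qq\|_\pgreek \;\leq\; \|A - A'\|_\op\,\|\qq\|_\qgreek \;\leq\; \frac{\psi_2(\|\qq\|)}{\|\qq\|}\,\|\qq\| \;=\; \psi_2(\|\qq\|).
\]
Adding these via the triangle inequality,
\[
\|A\qq - \pp\|_\pgreek \;\leq\; \|A'\qq - \pp\|_\pgreek + \|(A - A')\qq\|_\pgreek \;\leq\; \psi_1(\|\qq\|) + \psi_2(\|\qq\|),
\]
which is exactly the condition for $A \in \Delta_{\psi_1+\psi_2}(\pp,\qq)$, proving the inclusion.

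There is no serious obstacle: the content of the lemma is that the factor $\|\qq\|$ in the denominator of the thickening radius is precisely calibrated to cancel against the $\|\qq\|$ factor that appears when passing from the operator norm on $\MM$ to the $\pgreek$-norm on $\R^\pdim$ via multiplication by $\qq$. The only thing to record carefully is the choice of metric on $\MM$ (the operator norm associated to the pair of norms $\|\cdot\|_\pgreek$, $\|\cdot\|_\qgreek$), which the paper has already pinned down.
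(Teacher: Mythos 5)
Your proof is correct and is essentially identical to the paper's: both fix a matrix in the thickening, pick a nearby matrix in $\Delta_{\psi_1}(\pp,\qq)$, and combine the triangle inequality with the operator-norm bound $\|(A-A')\qq\|_\pgreek \leq \|A-A'\|\cdot\|\qq\|_\qgreek$ so that the $\|\qq\|$ factors cancel. Nothing further is needed.
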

For example, $\Delta_0(\pp,\qq)$ is an affine linear subspace of $\MM$, and $\thickvar{\Delta_0(\pp,\qq)}{\Psi(\|\qq\|)} \subset \Delta_\psi(\pp,\qq)$.
\begin{proof}
Fix $A\in \Delta_{\psi_1}(\pp,\qq)$ and $B\in \MM$ with $\|B - A\| \leq \psi_2(\|\qq\|)/\|\qq\|$. Then
\begin{align*}
\|B\qq - \pp\|
&\leq \|A\qq - \pp\| + \|B - A\|\cdot \|\qq\|\\
&\leq \psi_1(\|\qq\|) + \frac{\psi_2(\|\qq\|)}{\|\qq\|}\cdot\|\qq\| = (\psi_1 + \psi_2)(\|\qq\|).
\qedhere\end{align*}
\end{proof}

\begin{heur}
\label{heur3}
The measure of $B_\psi(Q_0,Q)$ is approximately the same as the measure of $\thickvar{B_{\psi,Q_0}}\rho$, where $\rho = \Psi(Q)$ is the minimum ``thickness'' of the sets $\Delta_\psi(\pp,\qq)$ ($Q_0 \leq \|\qq\| \leq Q$).
\end{heur}

%Thus, if we assume that the set $B_{\psi,Q_0}$ is sufficiently ``porous'' in comparison to $\rho$, then we can think of $\Psi(Q)$ as the ``average thickness'' of $B_\psi(Q_0,Q)$.

So
\[
-\log\lebesgue_\KK\big(\thickvar{B_{\psi,Q_0}}\rho\big) \sim \eta_\greeks F_\psi(\Psi^{-1}(\rho)).
\]
Since $\psi$ is assumed to be a nice approximation function, we have $\log\psi(q) \sim -\frac{\qdim}{\pdim}\log(q)$ and thus $\log\Psi(q) \sim -\frac{\dimsum}{\pdim}\log(q)$ and $\log\Psi^{-1}(\rho) \sim -\frac{\pdim}{\dimsum}\log(\rho)$. Since $F_\psi$ is either logarithmic or sublogarithmic, this gives
\[
-\log\lebesgue_\KK\big(\thickvar{B_{\psi,Q_0}}\rho\big) \sim \eta_\greeks F_\psi(\rho^{-\pdim/(\dimsum)}).
\]
We now have all the ingredients needed to finish the calculation. In what follows, the symbol $\sim$ just means intuitively that ``the quantities are close'' and we do not specify a more precise relation.
\begin{align*}
\log\underline\BB^f(B_{\psi,Q_0}\cap \KK)
&= \liminf_{\rho\to 0}\log\big(\boxc_\rho(B_{\psi,Q_0})f(\rho)\big)\\
&\sim \liminf_{\rho\to 0}\big[\log\lebesgue_\KK\big(\thickvar{B_{\psi,Q_0}}\rho\big) - \dimprod\log(\rho) + \log f(\rho)\big]\\
&\sim \liminf_{\rho\to 0}\left[-\eta_\greeks F_\psi(\rho^{-\pdim/(\dimsum)}) + \log\frac{f(\rho)}{f_\ast(\rho)}\right]\\
&= \begin{cases}
-\infty & \text{if } L_{f,\psi} < \eta_\greeks\\
\infty & \text{if } L_{f,\psi} > \eta_\greeks\\
\text{unknown} & \text{if } L_{f,\psi} = \eta_\greeks
\end{cases}.
\end{align*}
Using heuristic assumption \ref{heur1} and applying \eqref{supoverQ0} gives
\[
\HH^f(\Bad\psi)
= \begin{cases}
0 & \text{if } L_{f,\psi} < \eta_\greeks\\
\infty & \text{if } L_{f,\psi} > \eta_\greeks\\
\text{unknown} & \text{if } L_{f,\psi} = \eta_\greeks
\end{cases}.
\]

\draftnewpage
\section{Preliminaries and notation}
\label{sectionpreliminaries}

The notation introduced in this section will be used throughout the paper. We provide a summary in Appendix \ref{sectionnotation} for convenience. Recall that $\dims\in\N$ are fixed, and that $\MM$ denotes the set of $\pdim\times \qdim$ matrices.

\begin{convention}
The symbols $\lessapprox$, $\gtrapprox$, and $\approx$ will denote coarse multiplicative asymptotics, with a subscript of $\plus$ indicating that the asymptotic is additive instead of multiplicative, and other subscripts indicating variables that the implied constant is allowed to depend on. If there are no variables indicated, then the implied constant only depends on the parameters $\firstdim$, $\seconddim$, $\firstgreek$, and $\secondgreek$. The implied constant is understood to depend continuously on all parameters. We emphasize that all parameters the implied constant depends on other than $\firstdim$, $\seconddim$, $\firstgreek$, and $\secondgreek$ will be explicitly notated, even if they appear to be clear from context.

For example, $A\lessapprox_K B$ means that there exists a constant $k \geq 1$ that depends continuously on the parameters $\firstdim$, $\seconddim$, $\firstgreek$, $\secondgreek$, and $K$, and on no other parameters, such that $A \leq k B$. Similarly, $A\approx_\plus B$ means that there exists a constant $k\geq 0$, depending only on $\firstdim$, $\seconddim$, $\firstgreek$, and $\secondgreek$, such that $B - k \leq A \leq B + k$. Note that in the first example, if we have an upper bound and a lower bound on $K$ then we can use continuity to get an upper bound on the implied constant $k$.
%there exists a constant $k\geq 1$ (the \emph{implied constant}), depending only on $K$, such that $A\leq k B$. Similarly, $A\approx_\plus B$ means that there exists an implied constant $k\geq 0$, not depending on any variables, such that $B - k \leq A \leq B + k$. The implied constant is understood to depend continuously on the parameters, so e.g. in the first example, if $K\geq 1$, then any upper bound on $K$ should give an upper bound on the implied constant. (For some variables, a lower bound on that variable is needed in order to get an upper bound on the implied constant.)
%
%Dependence of the implied constant(s) on the dimensions $\firstdim$ and $\seconddim$ and on the norms $\|\cdot\|_\pgreek$ and $\|\cdot\|_\qgreek$ will be omitted from the notation. However, we emphasize that all other dependences will be notated; thus in the expression $A\lessapprox B$, the implied constant depends only on $\firstdim$, $\seconddim$, $\firstgreek$, and $\secondgreek$.
\end{convention}

\begin{convention}
\label{conventionconvergence}
When limits are written using arrow notation, they are assumed to be uniform with respect to all parameters except for those mentioned in parentheses. For example, the notation
\[
A \tendsto{\substack{Q\to \infty \\ \delta \to 0 \; (K)}} 0
\]
means: For all $\epsilon > 0$, ther exist $Q_0,\delta_0 > 0$, with $Q_0$ depending only on $\epsilon$ and $\delta_0$ depending on both $K$ and $\epsilon$ (and on $\constants$), such that if $Q \geq Q_0$ and $0 < \delta \leq \delta_0$, then $|A| \leq \epsilon$.
\end{convention}

\begin{convention}
The symbols $\sim$, $\lesssim$, and $\gtrsim$ will denote asymptotics whose implied constants tend to 1 (or 0 for additive asymptotics) as certain quantities approach their limits. The convergence of the implied constant is notated using Convention \ref{conventionconvergence}. For example, the formula
\[
A \eqsim{\delta \to 0 \; (Q)} B
\]
means: For all $\epsilon > 0$, there exists $\delta_0 > 0$ depending only on $\epsilon$, $Q$, and $\constants$ such that if $0 < \delta \leq \delta_0$, then $(1 - \epsilon) B \leq A \leq (1 + \epsilon) B$.

Similarly, the notation $A \lessless B$ means that $A/B \to 0$; again the convergence is notated underneath the $\lessless$ sign using Convention \ref{conventionconvergence}. (This differs from the common usage of $\lessless$ as the Vinogradov symbol; in this paper, $\lessapprox$ plays the function of the Vinogradov symbol.)
\end{convention}

\begin{convention}
Whenever a vector $\rr\in \R^\dimsum$ is fixed, we denote its components by $\pp\in\R^\pdim$ and $\qq\in\R^\qdim$, so that $\rr = (\pp,\qq)$. Similarly, whenever $\pp\in\R^\pdim$ and $\qq\in\R^\qdim$ are fixed, we use the shorthand $\rr = (\pp,\qq)$. The same convention applies to $\rr' = (\pp',\qq')$.% The same convention applies with various marks added e.g. $\rr' = (\pp',\qq')$, $\rr_0 = (\pp_0,\qq_0)$, etc.
\end{convention}

\begin{convention}
\label{conventioniverson}
We use the Iverson bracket notation $[\text{statement}] = \begin{cases} 1 & \text{statement true} \\ 0 & \text{statement false} \end{cases}$.
\end{convention}

\begin{convention}
The symbol $\triangleleft$ will be used to indicate the end of a nested proof.
\end{convention}

%\begin{lemma}
%\label{lemmaR1R2}
%Fix $R_1,R_2 > 0$. Then for all $t\geq 0$ and for all $\Lambda\in\Omega_\Dimsum$ with $\Delta(\Lambda)\leq R_2$,
%\[
%\lebesgue_\KK(\{A\in \KK : \Delta(g_t u_A \Lambda) > R_1\}) \leq e^{-(?) R_1} + e^{(?) R_2} e^{-(?) t}.
%\]
%\end{lemma}

\subsection{Cantor series expansion}
\label{subsectioncantorseries}
Let $\Dimprod = \dimprod$, so that $\R^\Dimprod$ is isomorphic to $\MM$ as a vector space. In what follows, we identify an element of $\MM$ with its image in $\R^\Dimprod$ under the map which sends a matrix to the list of its entries, and we take the norm on $\MM$ to be the operator norm $\|A\| = \max_{\|\qq\| = 1} \|A\qq\|$. We write $\KK = [0,1]^\Dimprod \subset \MM$. Note that this subsection and the next one (i.e. \6\ref{subsectioncantorseries}-\ref{subsectionevaporationrates}) make sense in any finite-dimensional normed space $(\R^\Dimprod,\|\cdot\|)$, without any reference to Diophantine approximation.

During the proof of Theorem \ref{maintheoremv2}, we will specify a sequence of integers $(N_k)_1^\infty$, depending on the approximation function $\psi$ and on an auxiliary parameter $\beta$. We will describe how this sequence is chosen later, but for now we assume that we are given a sequence $(N_k)_1^\infty$ satisfying $N_k\geq 2 \all k$. We introduce the following notations:
\begin{align*}
E_k &\df \{0,\ldots,N_k - 1\}^\Dimprod \subset \MM \note{$1 \leq k < \infty$}\\
E^k &\df \prod_{j = 1}^k E_j \;\;\; (\text{Cartesian product}) \note{$0 \leq k \leq \infty$}\\
N^k &\df \prod_{j = 1}^k N_j = \#(E^k)^{1/\Dimprod }. \note{$0 \leq k < \infty$}
\end{align*}

{\bf Warning.} The notations $E^k$ and $N^k$ should not be confused with the Cartesian and arithmetic powers of a set $E$ and a number $N$, respectively. In this paper, there is no set $E$ and there is no number $N$. As a plea to forgive this abuse of notation, we note that in the special case occurring in the proof of Theorem \ref{theoremHDasymp}, the sequence $(N_k)_1^\infty$ is constant (cf. Remark \ref{remarkNkconstant} below), and so in that case the notations $E^k$ and $N^k$ really do denote Cartesian and arithmetic powers.

For each $\omega\in E^k$, we write $|\omega| = k$. Let $E^* \df \bigcup_{k\geq 0} E^k$. ($E^\infty$ is not included in this union.) For all $\omega\in E^*\cup E^\infty$ and $k\leq |\omega|$, we denote the $k$th element of $\omega$ by $\omega_k$, and we denote the initial segment of $\omega$ of length $k$ by $\omega\given k$. We define the \emph{coding map} $\pi:E^*\cup E^\infty\to \KK$ via the formula
\[
\pi(\omega) = \sum_{k = 1}^{|\omega|} \frac{\omega_k}{N^k}\cdot
\]
This series is called the \emph{Cantor series expansion} of $\pi(\omega)$. Note that every element of $\KK$ has at least one Cantor series expansion, and almost every element of $\KK$ (with respect to Lebesgue measure) has exactly one Cantor series expansion. Moreover, for each $\omega\in E^*$, if
\begin{align*}
[\omega] &\df \{\tau \in E^\infty : \tau\given |\omega| = \omega\},&
\KK_\omega &\df \pi([\omega]),
\end{align*}
then
\begin{equation}
\label{pi[omega]}
\KK_\omega = \pi(\omega) + \frac{1}{N^{|\omega|}}\KK.
\end{equation}
In particular, $\KK_\omega$ is a cube of side length $1/N^{|\omega|}$. We observe that if $\omega,\tau\in E^*$ are incomparable,\Footnote{Two strings are called \emph{incomparable} if neither is an initial segment of the other.} then $\KK_\omega$ and $\KK_\tau$ have disjoint interiors.

The following notation will sometimes be convenient: Given $\omega\in E^*$ and $A\in\KK$, let
\[
\Phi_\omega(A) = \pi(\omega) + \frac{A}{N^{|\omega|}}\cdot
\]
By \eqref{pi[omega]}, $\Phi_\omega:\KK\to\KK_\omega$ is a bijection.

\subsection{Evaporation rates}
\label{subsectionevaporationrates}
A \emph{tree} in $E^*$ is a subset of $E^*$ which is closed under taking initial segments. Let $T^*\subset E^*$ be a tree. For each $k\in\N$, let $T^k \df E^k \cap T^*$, and for each $\omega\in T^*$, let
\[
T_\omega = \{a\in E_{|\omega| + 1} : \omega a \in T^{|\omega| + 1}\}.
\]
Here and hereafter, $\omega a$ denotes the concatenation of $\omega$ and $a$. For each $k\geq 1$, let
\begin{align*}
P_k^+ &\df \frac{1}{(N_k)^\Dimprod}\max_{\omega\in T^{k - 1}} \#(E_k\butnot T_\omega)\\
P_k^- &\df \frac{1}{(N_k)^\Dimprod}\min_{\omega\in T^{k - 1}} \#(E_k\butnot T_\omega).
\end{align*}
In other words, $P_k^+$ (resp. $P_k^-$) is the maximum (resp. minimum) proportion of children that get removed from the tree $T$ at stage $k$. We call the sequences $(P_k^+)_1^\infty$ and  $(P_k^-)_1^\infty$ the \emph{upper evaporation rate} and the \emph{lower evaporation rate} of $T^*$, respectively.% We say that the tree $T^*$ is \emph{slowly evaporating} if its upper evaporation rate is bounded away from $1$, and we say that it is \emph{quickly evaporating} if its lower evaporation rate is eventually bounded away from $0$. (Note that according to these definitions, a tree can be both slowly and quickly evaporating.)

Let $T^\infty$ denote the set of infinite branches through $T^*$, i.e. the set $\{\omega\in E^\infty : \omega\given k \in T^k\all k\}$. The goal of this subsection is to relate the Hausdorff $f$-measure of $\pi(T^\infty)$ to the evaporation rate of $T^*$, where $f$ is a dimension function. To do this, we introduce some more notation. Namely, let
\begin{align*}
M_k^- &\df (N_k)^\Dimprod (1 - P_k^+) = \min_{\omega\in T^{k - 1}} \#(T_\omega)\\
M_k^+ &\df (N_k)^\Dimprod (1 - P_k^-) = \max_{\omega\in T^{k - 1}} \#(T_\omega)\\
M_\pm^k &\df \prod_{j = 1}^k M_j^\pm.
\end{align*}
In other words, $M_k^-$ (resp. $M_k^+$) is the minimum (resp. maximum) number of children of any stage $k - 1$ node that remain in $T$. For each $0 < \rho \leq 1$, let $k(\rho)\in\N$ be the unique integer satisfying $1/N^{k(\rho) + 1} < \rho \leq 1/N^{k(\rho)}$, and let
\[
f_\pm(\rho) \df \frac{(N^{k(\rho)}\rho)^\Dimprod}{M_\mp^{k(\rho)}} = \left(\prod_{j = 1}^{k(\rho)} \frac{1}{1 - P_j^\pm}\right)\rho^\Dimprod.
\]
\begin{remark*}
The subscripts and superscripts above have been chosen so that $\sq_- \leq \sq_+$ for all relevant objects.
\end{remark*}

\begin{proposition}
\label{propositionevaporating}
Let $f$ be a dimension function. With notation as above,
\begin{itemize}
\item[(i)] If $T^* \subset E^*$ is a tree such that $\emptyset\in T^*$ and $\sup_{k\geq 1} P_k^+ < 1$, then
\begin{equation}
\label{evaporating1}
\HH^f(\pi(T^\infty)) \gtrapprox \liminf_{\rho\to 0}\frac{f(\rho)}{f_+(\rho)}\cdot
\end{equation}
\item[(ii)] For any tree $T^*\subset E^*$,
\begin{equation}
\label{evaporating2}
\HH^f(\pi(T^\infty)) \leq \underline\BB^f(\pi(T^\infty)) \lessapprox \liminf_{\rho\to 0}\frac{f(\rho)}{f_-(\rho)}\cdot
\end{equation}
\end{itemize}
\end{proposition}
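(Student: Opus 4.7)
The plan is to treat (ii) first. The inequality $\HH^f \leq \underline\BB^f$ is immediate from the discussion preceding Lemma \ref{lemmamassdistribution}, so it suffices to estimate $\boxc_\rho$. Given small $\rho$, let $k = k(\rho)$ so that $1/N^{k+1} < \rho \leq 1/N^k$, and observe that $\pi(T^\infty) \subset \bigcup_{\omega \in T^k} \KK_\omega$. Each $\KK_\omega$ is a standard cube of side $1/N^k \geq \rho$, hence is covered by $\lessapprox (N^k\rho)^{-\Dimprod}$ operator-norm balls of radius $\rho$, and since $\#T^k \leq M_+^k$ I get
$$
\boxc_\rho(\pi(T^\infty)) \lessapprox \frac{M_+^k}{(N^k\rho)^\Dimprod}\cdot
$$
Multiplying by $f(\rho)$ and using the identity $f_-(\rho) = (N^k\rho)^\Dimprod/M_+^k$ gives $\boxc_\rho(\pi(T^\infty))f(\rho) \lessapprox f(\rho)/f_-(\rho)$, and (ii) follows by taking the liminf.

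For (i), I would put a probability measure $\mu$ on $T^\infty$ by the obvious recursion: $\mu([\emptyset]) = 1$ and $\mu([\omega a]) = \mu([\omega])/\#T_\omega$ for $\omega \in T^{k-1}$ and $a \in T_\omega$. The assumption $\sup_k P_k^+ < 1$ forces $M_k^- \geq 1$ (hence $\#T_\omega \geq 1$) for every $k$ and every $\omega$, so the recursion is well-defined; induction then yields $\mu([\omega]) \leq 1/M_-^{|\omega|}$. Push $\mu$ forward under $\pi$ to obtain a probability measure supported on $\pi(T^\infty)$, which I still denote by $\mu$. The goal is to apply Lemma \ref{lemmamassdistribution} with $g \eucq f_+$.

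The crux, and the main obstacle, is the estimate $\mu(\ball{A}{\rho}) \lessapprox f_+(\rho)$ for all $A$ and small $\rho$. The naive approach of estimating at level $k(\rho)$, where a ball of radius $\rho \leq 1/N^{k(\rho)}$ meets only $\lessapprox 1$ of the cubes $\{\KK_\omega : \omega \in T^{k(\rho)}\}$, each of $\mu$-mass at most $1/M_-^{k(\rho)}$, yields merely $\mu(\ball{A}{\rho}) \lessapprox 1/M_-^{k(\rho)}$, which is off from $f_+(\rho)$ by a factor of $(N^{k(\rho)}\rho)^\Dimprod$ that can be arbitrarily close to $0$ when $N_{k(\rho)+1}$ is large. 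The fix is to descend one more level: at level $k = k(\rho)+1$ one has $\rho > 1/N^k$, so the ball meets at most $\lessapprox (\rho N^k)^\Dimprod$ cubes, each of mass at most $1/M_-^k$, giving
$$
\mu(\ball{A}{\rho}) \lessapprox \frac{(\rho N^{k(\rho)+1})^\Dimprod}{M_-^{k(\rho)+1}} = \frac{1}{1-P_{k(\rho)+1}^+}\cdot\frac{(\rho N^{k(\rho)})^\Dimprod}{M_-^{k(\rho)}} \lessapprox f_+(\rho),
$$
where the last $\lessapprox$ invokes precisely the hypothesis $\sup_k P_k^+ < 1$ to absorb the factor $1/(1-P_{k(\rho)+1}^+)$. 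Lemma \ref{lemmamassdistribution} with this $g$ then yields (i).
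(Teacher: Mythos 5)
Your proposal is correct and follows essentially the same route as the paper: part (ii) is the identical cube-counting argument, and for part (i) the paper likewise pushes forward a measure with $\mu([\omega]) \leq 1/M_-^{|\omega|}$ (it prunes the tree first so that equality holds, a cosmetic difference from your uniform-splitting recursion) and performs the same volume-based count of level-$(k(\rho)+1)$ cubes meeting $\ball A\rho$, absorbing the factor $1/(1 - P_{k(\rho)+1}^+)$ via the hypothesis $\sup_k P_k^+ < 1$. Your identification of why one must descend to level $k(\rho)+1$ rather than stopping at $k(\rho)$ is exactly the point of the paper's computation.
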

%\begin{remark}
%If the sequence $(N_k)_1^\infty$ is constant, Proposition \ref{propositionevaporating} [Continue\internal]
%\end{remark}
\begin{proof}[Proof of \text{(i)}]
By pruning the tree $T^*$, we may without loss of generality assume that $\#(T_\omega) = M_{|\omega| + 1}^-$ for all $\omega\in T^*$. Let $\nu$ be the unique measure on $T^\infty$ such that
\[
\nu([\omega]) = 1/M_-^{|\omega|} \all \omega\in T^*.
\]
Let $\mu$ be the image of $\nu$ under the coding map $\pi$. Now fix $A\in \KK$ and $0 < \rho \leq 1$, and let $k = k(\rho)$. Let $\diamcube = \max_{B\in\KK} \|B\|$. Then
\begin{align*}
\mu(\ball A\rho)
&= \sum_{\omega\in E^{k + 1}}\nu\big([\omega]\cap \pi^{-1}(\ball A\rho)\big)\\
&\leq \frac{1}{M_-^{k + 1}}\#\{\omega\in E^{k + 1} : \KK_\omega\cap \ball A\rho\neq\emptyset\} \noreason\\
&\leq \frac{1}{M_-^{k + 1}}\#\{\omega\in E^{k + 1} : \KK_\omega\subset \ballvar A{(1 + \diamcube)\rho}\} \\ \since{$\diam(\KK_\omega) \leq \diamcube/N^{k + 1} < \diamcube \rho$ for all $\omega\in E^{k + 1}$}\\
&\leq \frac{1}{M_-^{k + 1}}\frac{\lebesgue_\KK\big(\ballvar A{(1 + \diamcube)\rho}\big)}{(1/N^{k + 1})^\Dimprod} \\ \since{$\lebesgue_\KK(\KK_\omega) = (1/N^{k + 1})^\Dimprod$ for all $\omega\in E^{k + 1}$}\\
&\approx \frac{(N^{k + 1})^\Dimprod}{M_-^{k + 1}}\rho^\Dimprod\\
&\approx \frac{(N^k)^\Dimprod}{M_-^k}\rho^\Dimprod \\ \since{$\sup_{k\geq 1} P_k^+ < 1$}\\
&= f_+(\rho).
\end{align*}
Applying Lemma \ref{lemmamassdistribution} demonstrates \eqref{evaporating1}.
\end{proof}

\begin{proof}[Proof of \text{(ii)}]
Fix $0 < \rho \leq 1$, and let $k = k(\rho)$. For each $\omega\in T^k$, we have $\boxc_\rho(\KK_\omega) = \boxc_{N^k\rho}(\KK) \approx 1/(N^k\rho)^\Dimprod$. Since $\#(T^k) \leq M_+^k$, it follows that
\[
\boxc_\rho(\pi(T^\infty)) \leq \sum_{\omega\in T^k} \boxc_\rho(\KK_\omega) \lessapprox \frac{M_+^k}{(N^k\rho)^\Dimprod} = \frac{1}{f_-(\rho)}\cdot
\]
Multiplying by $f(\rho)$ and taking the liminf as $\rho\to 0$ finishes the proof.
\end{proof}

\subsection{Choice of the sequence $(N_k)_1^\infty$}
In this subsection we describe how the sequence $(N_k)_1^\infty$ to be used in the proof of Theorem \ref{maintheoremv2} will be chosen, depending on a nice approximation function $\psi$ and a parameter $\beta > 0$. To motivate this choice, we recall that in the heuristic argument of Section \ref{sectionheuristic}, we estimated (cf. \eqref{FpsiQheur}) that for all $1 \leq Q_1 \leq Q_2$,
\begin{equation}
\label{heuristicFpsiQ1Q2}
-\log\lebesgue_\KK\big(B_\psi(Q_1,Q_2)\big) \sim \eta_\greeks F_\psi(Q_1,Q_2),
\end{equation}
where $B_\psi(Q_1,Q_2)$ is defined by \eqref{BpsiQ0Qdef}, and
\begin{equation}
\label{Fpsidef2}
F_\psi(Q_1,Q_2) \df \int_{Q_1}^{Q_2} q^{\qdim - 1} \psi^\pdim(q) \;\dee q.
\end{equation}
Note that the two-input function $F_\psi$ defined here is related to the one-input function $F_\psi$ defined in \eqref{Fpsidef1} via the asymptotic
\[
F_\psi(Q) \eqsim{Q\to \infty \; (Q_0)} F_\psi(Q_0,Q).
\]
We have switched from a sum to an integral for ease of calculations later.% Technically, in order for the integral to make sense, the $\psi$ that appears in \eqref{Fpsidef2} actually denotes an extension of the original function $\psi$ to $\CO 1\infty$ such that the function $\psi/\psi_\ast$ is nonincreasing on $\CO 1\infty$. We will not make too big a deal about the distinction between $\psi$ and its extension in what follows.

Coming back to the motivation of the choice of $(N_k)_1^\infty$, we want to choose it so that the companion sequence
\begin{equation}
\label{Qkdef}
Q^k \df (N^k)^{\pdim/(\dimsum)} \;\;\; (k\in\N)
\end{equation}
satisfies $F_\psi(Q^k,Q^{k + 1}) \sim \beta$ for all $k$, for an appropriate limiting process. The idea is to make sure that $Q^k$ and $Q^{k + 1}$ are ``far enough apart'' that the heuristic formula \eqref{heuristicFpsiQ1Q2} is true for $Q_1 = Q^k$ and $Q_2 = Q^{k + 1}$, but ``close enough together'' that the left-hand side of \eqref{heuristicFpsiQ1Q2} is essentially the same as $\lebesgue_\KK\big(W_\psi(Q^k,Q^{k + 1})\big)$. The significance of the sequence \eqref{Qkdef} is that the ``minimum thickness'' of $B_\psi(Q^k,Q^{k + 1})$ is approximately $1/N^{k + 1}$ (cf. Lemma \ref{lemmathickness} and Heuristic assumption \ref{heur3}) and thus heuristically, we expect that if $\omega\in E^\infty$ is the Cantor series expansion of a matrix $A = \pi(\omega) \in \KK$, then it should be possible to control whether or not $A \in B_\psi(Q^k,Q^{k + 1})$ by controlling the coordinate $\omega_{k + 1}$, given information about the initial segment $\omega\given k$. This will allow us to create a tree contained in $B_{\psi,Q_0}$ that we can control the evaporation rate of.

Now we need to show that it is possible to choose the sequence $(N_k)_1^\infty$ in such a way so as to satisfy $F_\psi(Q^k,Q^{k + 1}) \sim \beta \all k$, or more precisely,
\begin{equation}
\label{Fpsisim}
F_\psi(Q^k,Q^{k + 1}) \eqsim{M_\psi/\beta \to 0} \beta.
\end{equation}
Before we do, we collect some facts about the function $F_\psi$. Since $\psi$ is a nice approximation function, the function
\begin{equation}
\label{phidef}
\phi(q) \df \left(\frac{\psi(q)}{\psi_\ast(q)}\right)^\pdim = q^\qdim \psi^\pdim(q)
\end{equation}
is nonincreasing, and thus $\phi(Q_2) \leq \phi(q) \leq \phi(Q_1)$ for all $q\in [Q_1,Q_2]$. Since
\[
F_\psi(Q_1,Q_2) = \int_{Q_1}^{Q_2} \phi(q) \frac{\dee q}{q},
\]
we get
\begin{equation}
\label{Fpsibounds}
\phi(Q_2) \log(Q_2/Q_1) \leq F_\psi(Q_1,Q_2) \leq \phi(Q_1) \log(Q_2/Q_1).
\end{equation}
In particular, since $\phi(Q_1) \leq \phi(1) = M_\psi$ we have
\begin{equation}
\label{FpsiMpsibounds}
F_\psi(Q_1,Q_2) \leq M_\psi \log(Q_2/Q_1).
\end{equation}

\begin{lemma}
\label{lemmaNk}
Fix $\alpha,\beta > 0$ and a nice approximation function $\psi$. There exists a sequence of integers $(N_k)_1^\infty$ such that for all $k\geq 1$,
\begin{equation}
\label{Nkdef}
\beta \leq F_\psi(Q^k,Q^{k + 1}) \leq \beta + M_\psi\alpha\log(2),
\end{equation}
where
\begin{align*}
N^k &\df \prod_{j = 1}^k N_j,&
Q^k &\df (N^k)^\alpha.
\end{align*}
In fact, the sequence $(N_k)_1^\infty$ can be chosen to consist of powers of 2.
\end{lemma}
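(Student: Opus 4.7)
My plan is to define the sequence $(N_k)$ by a greedy induction, at each stage picking $N_{k+1}$ to be the smallest power of $2$ for which the quantity $F_\psi(Q^k, Q^{k+1})$ first crosses the threshold $\beta$. Concretely, I start with $N^0 = 1$ and $Q^0 = 1$, and once $N_1, \ldots, N_k$ and hence $Q^k$ are determined, I consider the nondecreasing function $j \mapsto \Phi_k(j)$ defined by $\Phi_k(j) = F_\psi(Q^k, Q^k \cdot 2^{j\alpha})$, let $j_{k+1}$ be the least positive integer with $\Phi_k(j_{k+1}) \geq \beta$, and set $N_{k+1} = 2^{j_{k+1}}$.

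For this construction to make sense, I need $\Phi_k(j) \to \infty$ as $j \to \infty$. This is where the nice approximation hypothesis enters: the series $\sum_{q \geq 1} q^{\qdim - 1}\psi^\pdim(q)$ diverges by assumption, and since the summand equals $\phi(q)/q$ with $\phi$ nonincreasing, the integral test upgrades this to divergence of $\int_1^\infty q^{\qdim - 1}\psi^\pdim(q)\,\dee q$ as well, so that $\Phi_k(j) \to \infty$. Because $\Phi_k(0) = 0 < \beta$, the minimum defining $j_{k+1}$ is at least $1$, so $N_{k+1}$ is indeed an integer $\geq 2$ and is a power of $2$ by construction.

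It then remains to verify both inequalities in \eqref{Nkdef}. The lower bound holds by construction. For the upper bound I will split, using the minimality of $j_{k+1}$:
\[
F_\psi(Q^k, Q^{k+1}) = \Phi_k(j_{k+1} - 1) + F_\psi(Q^k \cdot 2^{(j_{k+1} - 1)\alpha}, \; Q^{k+1}).
\]
The first summand is strictly less than $\beta$ by minimality, while the second summand has an interval of logarithmic length exactly $\alpha \log(2)$ and so is bounded above by $M_\psi \alpha \log(2)$ via the estimate \eqref{FpsiMpsibounds} (applied with lower endpoint $\geq 1$). Adding these two estimates yields the desired bound.

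There is no real obstacle in this argument; beyond elementary bookkeeping, the only ingredient is the divergence of the integral form of the Khinchin-type series noted in the second paragraph, together with the observation that $\alpha \log(2)$ is the logarithmic step size associated with multiplying $N^k$ by $2$, which is exactly what makes the greedy procedure overshoot $\beta$ by at most $M_\psi \alpha \log(2)$.
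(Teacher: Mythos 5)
Your proposal is correct and is essentially the paper's own argument: a greedy inductive choice of $N_{k+1}$ as the smallest power of $2$ for which $F_\psi(Q^k,Q^{k+1})$ reaches $\beta$, with existence coming from divergence of $\int^\infty q^{\qdim-1}\psi^\pdim(q)\,\dee q$, the lower bound holding by construction, and the upper bound obtained by splitting off the last factor of $2^\alpha$ and applying \eqref{FpsiMpsibounds}. The only difference is cosmetic — you explicitly justify passing from divergence of the series \eqref{khinchinseries} to divergence of the integral via monotonicity of $\phi(q)/q$, a step the paper leaves implicit.
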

In particular, letting $\alpha = \pdim/(\dimsum)$ shows that we can choose $(N_k)_1^\infty$ so as to satisfy \eqref{Fpsisim}.
\begin{remark}
\label{remarkNkconstant}
For the case occurring in the proof of Theorem \ref{theoremHDasymp}, namely $\psi(q) = \kappa^{1/\pdim} \psi_\ast(q)$ for some $\kappa > 0$, the proof of this lemma can be accomplished by letting $(N_k)_1^\infty$ be the constant sequence whose terms are all equal to $2^{\lceil \beta/(\kappa\alpha\log(2))\rceil}$.
%we have $M_\psi = \kappa$ and $F_\psi(Q^k,Q^{k + 1}) = \kappa \alpha \log(N_{k + 1}) \all k$, and thus the sequence $(N_k)_1^\infty$ can be chosen to be constant, namely
%\[
%N_k = 2^{\lceil \beta/(\kappa\alpha\log(2))\rceil} \all k.
%\]
\end{remark}
\begin{proof}[Proof of Lemma \ref{lemmaNk}]
Given $k\geq 0$ and $N_1,\ldots,N_k$, we find $N_{k + 1}\geq 2$ such that \eqref{Nkdef} holds. Let $\ell \geq 0$ be the smallest integer for which $G_\psi(N^k,2^\ell N^k) \geq \beta$, where
\[
G_\psi(M_1,M_2) \df F_\psi(M_1^\alpha,M_2^\alpha).
\]
Such an integer exists since
\[
\lim_{\ell\to\infty} G_\psi(N^k,2^\ell N^k) = \int_{Q^k}^\infty q^{\qdim - 1} \psi^\pdim(q) \;\dee q = \infty,
\]
where the last equality holds because $\psi$ is a nice approximation function (cf. Definition \ref{definitionniceapproximation}). Since $\beta > 0$, $\ell\geq 1$. So
\begin{align*}
G_\psi(N^k,2^{\ell - 1} N^k) &\leq \beta\\
G_\psi(2^{\ell - 1} N^k,2^\ell N^k) &\leq M_\psi\alpha \log(2), \by{\eqref{FpsiMpsibounds}}
\end{align*}
and adding these inequalities yields
\[
\beta \leq G_\psi(N^k,2^\ell N^k) \leq \beta + M_\psi\alpha \log(2).
\]
Letting $N_{k + 1} = 2^\ell$ finishes the proof.
\end{proof}

%\begin{remark}
%\label{remarkNk}
%Combining \eqref{FpsiMpsibounds} and \eqref{Nkdef} shows that the sequence $(N_k)_1^\infty$ satisfies
%\[
%\beta \leq M_\psi \alpha \log(N_k) \all k.
%\]
%\end{remark}

\subsection{Homogeneous dynamics}
\label{subsectioncorrespondence}
Although we do not use it directly, it is worth recalling the connection between Diophantine approximation and dynamics in homogeneous spaces first discovered by S. G. Dani \cite{Dani3} and subsequently generalized by D. Y. Kleinbock and G. A. Margulis \cite{KleinbockMargulis}. To state their result precisely, we need some notation. This notation will end up being convenient for our purposes as well.

Let $\Dimsum = \dimsum$, let $\Omega_\Dimsum$ denote the set of unimodular (i.e. covolume one) lattices in $\R^\Dimsum$, and let $\Lambda_* = \Z^\Dimsum \in \Omega_\Dimsum$. Let $G = \SL_\Dimsum(\R)$ and $\Gamma = \SL_\Dimsum(\Z)$. Note that $\Omega_\Dimsum$ is isomorphic to the homogeneous space $G/\Gamma$ via the mapping $g\Gamma \mapsto g(\Lambda_*)$. Let the function $\Delta:\Omega_\Dimsum\to\R$ be defined by the formula
\[
\Delta(\Lambda) = -\log\min_{\substack{\rr\in\Lambda \\ \rr\neq\0}} \|\rr\|,
\]
where
\begin{equation}
\label{rnormdef}
\|(\pp,\qq)\| \df \|\pp\| \vee \|\qq\| = \|\pp\|_\pgreek \vee \|\qq\|_\qgreek .
\end{equation}
For each $A\in \MM$ and $t \in \R$, let
\begin{align*}
u_A &\df \left[\begin{array}{ll}
I_\pdim & -A\\
& I_\qdim
\end{array}\right],&
g_t &\df \left[\begin{array}{ll}
e^{t/\pdim}I_\pdim &\\
& e^{-t/\qdim}I_\qdim
\end{array}\right],
\end{align*}
and note that $u_A, g_t \in G$. Here $I_k$ denotes the $k$-dimensional identity matrix.

\begin{remark*}
The maps $A\mapsto u_A$ and $t\mapsto g_t$ are exponential homomorphisms (in the sense that their domains are additive groups but their common codomain is the multiplicative group $G$). Dynamically, the subgroup $(u_A)$ is a multiparameter unipotent flow on $\Omega_\Dimsum$, while the subgroup $(g_t)$ is a one-parameter diagonal flow on $\Omega_\Dimsum$.
\end{remark*}

Let $\delta = \Dimprod/\Dimsum = \dimprod/(\dimsum)$. Then the conjugation relation between the $(u_A)$ flow and the $(g_t)$ flow can be expressed as follows:
\begin{equation}
\label{deltamotivation}
g_{-\delta t} u_A g_{\delta t} = u_{e^{-t}A}.
\end{equation}
We are now ready to state the Kleinbock--Margulis correspondence principle:

\begin{theorem}[{\cite[Theorem 8.5]{KleinbockMargulis}}]
\label{theoremkleinbockmargulis}
Let $\psi:\CO 1\infty\to(0,\infty)$ be a nonincreasing continuous function, and let $r_\psi:\CO{t_0}\infty\to\R$ be defined by the formula
\begin{equation}
\label{rpsi}
\psi(e^{t/\qdim - r_\psi(t)}) = e^{-t/\pdim - r_\psi(t)}
\end{equation}
as well as the requirement that both sides should be decreasing with respect to $t$. (The existence and uniqueness of such a function is proven in \cite[Lemma 8.3]{KleinbockMargulis}). Given $A\in\MM$, we have $A\in \Approx\psi$ if and only if there exist arbitrarily large $t \geq 0$ such that
\begin{equation}
\label{kleinbockmargulis}
\Delta(g_t u_A \Lambda_*) \geq r_\psi(t).
\end{equation}
Equivalently, if for each $t \geq 0$ we let
\begin{equation}
\label{Kpsitdef}
K(\psi,t) = \{\Lambda\in \Omega_\Dimsum : \Delta(\Lambda) < r_\psi(t)\},
\end{equation}
then $A\in \Approx\psi$ if and only if there exist arbitrarily large $t \geq 0$ such that $g_t u_A \Lambda_* \notin K(\psi,t)$.
\end{theorem}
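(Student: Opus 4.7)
The plan is to translate the lattice--minimum condition $\Delta(g_t u_A \Lambda_*) \geq r_\psi(t)$ directly into the Diophantine inequality defining membership in $\Approx\psi$, by unpacking the orbit and applying the defining identity \eqref{rpsi} of $r_\psi$. First I would unpack the orbit: every nonzero vector in $u_A \Lambda_*$ has the form $(\pp - A\qq, \qq)$ with $(\pp,\qq) \in \Z^\Dimsum \butnot \{\0\}$, so every nonzero vector in $g_t u_A \Lambda_*$ has the form $(e^{t/\pdim}(\pp - A\qq),\, e^{-t/\qdim} \qq)$. Using the norm \eqref{rnormdef}, the condition \eqref{kleinbockmargulis} is therefore equivalent to the existence of a nonzero $(\pp,\qq) \in \Z^\Dimsum$ with
\[
e^{t/\pdim}\|A\qq - \pp\|_\pgreek \leq e^{-r_\psi(t)} \quad\text{and}\quad e^{-t/\qdim}\|\qq\|_\qgreek \leq e^{-r_\psi(t)}.
\]
Setting $Q(t) \df e^{t/\qdim - r_\psi(t)}$, the defining equation \eqref{rpsi} collapses this conjunction to the pair of inequalities $\|\qq\|_\qgreek \leq Q(t)$ and $\|A\qq - \pp\|_\pgreek \leq \psi(Q(t))$. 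The monotonicity clause in the definition of $r_\psi$ makes $Q$ continuous and strictly increasing, and a short check shows $Q(t) \to \infty$ as $t \to \infty$.

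For the forward implication, assume $A \in \Approx\psi$ and extract a sequence of distinct pairs $(\pp_k,\qq_k)$ with $\qq_k \neq \0$, $\|\qq_k\|_\qgreek \to \infty$, and $\|A\qq_k - \pp_k\|_\pgreek \leq \psi(\|\qq_k\|_\qgreek)$; the growth of $\|\qq_k\|_\qgreek$ is automatic because, for each fixed $\qq$, only boundedly many $\pp$ can satisfy the inequality. For each sufficiently large $k$ I would choose $t_k$ by solving $Q(t_k) = \|\qq_k\|_\qgreek$, which is possible by continuity and surjectivity of $Q$ onto its range; then both inequalities of the preceding paragraph hold at $t_k$ (the first with equality), so $\Delta(g_{t_k} u_A \Lambda_*) \geq r_\psi(t_k)$, and $t_k \to \infty$.

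For the reverse implication, select an unbounded sequence of times $t_k$ at which \eqref{kleinbockmargulis} holds, and for each pick a nonzero witness $(\pp_k,\qq_k) \in \Z^\Dimsum$ satisfying the two displayed inequalities at $t = t_k$. Once $\psi(Q(t_k)) < 1$, any witness with $\qq_k = \0$ would force $\pp_k = \0$, contradicting nonzeroness, so we may assume $\qq_k \neq \0$. Combining $\|\qq_k\|_\qgreek \leq Q(t_k)$ with monotonicity of $\psi$ gives $\|A\qq_k - \pp_k\|_\pgreek \leq \psi(Q(t_k)) \leq \psi(\|\qq_k\|_\qgreek)$, so each $(\pp_k,\qq_k)$ is an honest $\psi$-approximation.

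The one bookkeeping point I expect to require care is producing infinitely many \emph{distinct} pairs in the reverse direction. I would argue that a fixed pair $(\pp,\qq)$ with $A\qq \neq \pp$ can satisfy both inequalities only for $t$ in a bounded interval: the first fails for small $t$ since $Q$ is bounded below, while the second fails for large $t$ because $\psi(Q(t))$ decreases to $\inf \psi$ and eventually drops below $\|A\qq - \pp\|_\pgreek$ in the regime of interest; the degenerate possibility $A\qq = \pp$ places $A$ in $\Approx\psi$ trivially. Since the $t_k$ are unbounded, infinitely many distinct witnesses $(\pp_k,\qq_k)$ must occur, which is exactly what is needed to conclude $A \in \Approx\psi$.
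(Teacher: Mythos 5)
The paper does not actually prove this theorem: it is imported verbatim from Kleinbock--Margulis \cite[Theorem 8.5]{KleinbockMargulis} (with only the remark that the max-norm hypothesis can be relaxed to \eqref{rnormdef}), so there is no internal proof to compare against. Your argument is the standard unpacking of the Dani correspondence and is, in substance, the proof in the cited source: identify the nonzero vectors of $g_t u_A \Lambda_*$ as $(e^{t/\pdim}(\pp - A\qq), e^{-t/\qdim}\qq)$, use \eqref{rnormdef} to split \eqref{kleinbockmargulis} into the two inequalities $\|\qq\|_\qgreek \leq Q(t)$ and $\|A\qq-\pp\|_\pgreek \leq \psi(Q(t))$ with $Q(t) = e^{t/\qdim - r_\psi(t)}$, and pass back and forth using monotonicity of $Q$ and of $\psi$. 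The computation $Q(t)/\psi(Q(t)) = e^{\Dimsum t/\Dimprod}$ does force $Q(t)\to\infty$ as you claim, and both implications are correctly set up.

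Two loose ends are worth tightening. First, your exclusion of witnesses with $\qq_k = \0$ and your distinctness argument both rest on $\psi(Q(t_k))$ eventually dropping below a fixed positive threshold, which requires $\inf\psi = 0$; when $\inf\psi = c > 0$ (allowed by the hypotheses) neither step applies, and this case needs a separate one-line disposal: Dirichlet's theorem gives $A\in\Approx\psi$ for every $A$, while $r_\psi(t) \leq -t/\pdim - \log c \to -\infty$ and $\Delta$ is uniformly bounded below on $\Omega_\Dimsum$ by Minkowski's theorem, so \eqref{kleinbockmargulis} also holds for all large $t$; hence both sides of the equivalence are vacuously true. (Your phrase ``in the regime of interest'' gestures at this but does not discharge it.) Second, since the theorem is being used here for arbitrary norms, the threshold ``$\psi(Q(t_k)) < 1$'' should be replaced by the minimal $\|\cdot\|_\pgreek$-norm of a nonzero vector of $\Z^\pdim$; this is the only point where your argument silently uses the max norm rather than just \eqref{rnormdef}. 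With these two adjustments the proof is complete.
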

\begin{remark*}
Theorem \ref{theoremkleinbockmargulis} was originally stated for the case where $\|\cdot\|_\firstgreek$ and $\|\cdot\|_\secondgreek$ are both the max norm, but the only place this is needed in the proof is to ensure that the relation \eqref{rnormdef} holds.
\end{remark*}
%\begin{remark*}
%Using the notation \eqref{phidef}, one can rewrite the formula \eqref{rpsi} as
%\begin{equation}
%\label{rpsi2}
%\phi(e^{t/\qdim - r_\psi(t)}) = e^{-\Dimsum  r_\psi(t)}.
%\end{equation}
%\end{remark*}

\draftnewpage
\section{Lattice estimates}
\label{sectionestimates}

The goal of this section is to justify the heuristic calculation \eqref{FpsiQheur}, but in a ``local'' way that will later allow us to deduce estimates on the evaporation rates of certain trees. This ``localness'' will be represented by the fact that we consider an arbitrary unimodular lattice $\Lambda\in\Omega_\Dimsum$ rather than just the lattice $\Lambda_\ast = \Z^\Dimsum$. Specifically, let us fix:
\begin{itemize}
\item a lattice $\Lambda\in\Omega_\Dimsum$,
\item numbers $Q_1,Q_2$ with $1 \leq Q_1 \leq Q_2$,
\item a function $\psi:[Q_1,Q_2] \to (0,\infty)$ such that $\psi/\psi_\ast$ is nonincreasing.
\end{itemize}
Then the goal of this section is to prove the following theorem:
\begin{theorem}
\label{theoremestimates}
The set
\[
W_{\psi,\Lambda}(Q_1,Q_2) \df \bigcup_{\substack{\rr\in \Lambda \\ Q_1 \leq \|\qq\| \leq Q_2}} \Delta_\psi(\rr)
\]
(cf. \eqref{Deltapsidef}) satisfies
\begin{equation}
\label{estimates}
\lebesgue_\KK\big(W_{\psi,\Lambda}(Q_1,Q_2)\big)
\eqsim{\substack{Q_1/\Irr(\Lambda)\to\infty \\ \phi(Q_1)/F_\psi(Q_1,Q_2) \to 0 \\ F_\psi(Q_1,Q_2) \to 0}} \eta_\greeks F_\psi(Q_1,Q_2),
\end{equation}
where
\begin{equation}
\label{IrrLambdadef}
\Irr(\Lambda) \df \minkowski^{-(2\generic - 1)}(\Lambda)
\end{equation}
and $\phi$ is defined by \eqref{phidef}. Here $\minkowski(\Lambda)$ denotes the first Minkowski minimum of $\Lambda$ (with respect to the unit ball), i.e.
\[
\minkowski(\Lambda) \df \min_{\rr\in \Lambda\butnot\{\0\}} \|\rr\|.
\]
\end{theorem}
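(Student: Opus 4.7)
My plan is to make rigorous the heuristic calculation of Section \ref{sectionheuristic}, interpreting the three hypotheses in \eqref{estimates} as controlling three distinct error terms in an inclusion-exclusion argument. First, using that $\Delta_\psi(-\rr) = \Delta_\psi(\rr)$ and that $\Delta_\psi(k\rr) \subset \Delta_\psi(\rr)$ for $k \in \N$ (the latter holding because $\psi/\psi_\ast$ is nonincreasing, forcing $\psi(kq)/k \leq \psi(q)$), I reduce $W_{\psi,\Lambda}(Q_1,Q_2)$ to a union indexed by a fundamental set $\Lambda' \subset \Lambda$ of positive primitive vectors (one per pair $\{\pm\rr\}$ with $\rr$ primitive), modulo a correction coming from primitives with $\|\qq\|_\qgreek < Q_1$ whose integer multiples land in $[Q_1,Q_2]$; this correction is negligible in the joint limit. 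Bonferroni's inequality then gives $S_1 - S_2 \leq \lebesgue_\KK(W_{\psi,\Lambda}(Q_1,Q_2)) \leq S_1$, where $S_1$ and $S_2$ are respectively the sums of individual measures $\lebesgue_\KK(\Delta_\psi(\rr))$ and of pairwise intersection measures $\lebesgue_\KK(\Delta_\psi(\rr) \cap \Delta_\psi(\rr'))$, taken over $\rr, \rr' \in \Lambda'$ with $Q_1 \leq \|\qq\|_\qgreek \leq Q_2$.

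To estimate $S_1$, I would interchange sum and integral via Fubini:
\[
S_1 = \int_\KK \#\bigl\{\rr \in \Lambda' : Q_1 \leq \|\qq\|_\qgreek \leq Q_2,\; \|A\qq - \pp\|_\pgreek \leq \psi(\|\qq\|)\bigr\} \;\dee A.
\]
For each fixed $A \in \KK$, the counted region in $\R^\dimsum$ is a tube over the annulus $\{Q_1 \leq \|\qq\|_\qgreek \leq Q_2\}$ whose fiber at $\qq$ is the $\pgreek$-ball of radius $\psi(\|\qq\|)$ centered at $A\qq$. Its volume is independent of $A$ and, by Fubini and Lemma \ref{lemmasphericalcoordinates}, equals
\[
\int_{Q_1 \leq \|\qq\|_\qgreek \leq Q_2} V_\pgreek \psi^\pdim(\|\qq\|)\;\dee\qq = \qdim \Vgreeks F_\psi(Q_1,Q_2) = 2\zeta(\Dimsum)\eta_\greeks F_\psi(Q_1,Q_2).
\]
A lattice-point count for the unimodular lattice $\Lambda$ (whose primitive vectors have density $1/\zeta(\Dimsum)$, halved again upon choosing one representative per $\{\pm \rr\}$) then yields $S_1 \sim \eta_\greeks F_\psi(Q_1,Q_2)$; the error in this lattice approximation involves the lattice scale $\Irr(\Lambda)$ measured against the geometric scale of the tube, and vanishes by the hypothesis $Q_1/\Irr(\Lambda) \to \infty$.

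The main obstacle is bounding $S_2$ by $o(F_\psi(Q_1,Q_2))$. For distinct $\rr, \rr' \in \Lambda'$, the vectors $\qq, \qq'$ are linearly independent, and $\Delta_\psi(\rr) \cap \Delta_\psi(\rr')$ is the intersection of two transverse slabs in $\MM$. A row-by-row computation, using that the preimage under $A \mapsto (A\qq, A\qq')$ of a box of measure $\psi(\|\qq\|)\psi(\|\qq'\|)$ has scale $\psi(\|\qq\|)\psi(\|\qq'\|)/\|\qq\wedge\qq'\|$, should yield a bound of roughly the form
\[
\lebesgue_\KK(\Delta_\psi(\rr) \cap \Delta_\psi(\rr')) \lessapprox \frac{\psi^\pdim(\|\qq\|)\,\psi^\pdim(\|\qq'\|)}{\|\qq\wedge\qq'\|^\pdim},
\]
with an appropriate Pl\"ucker-coordinate denominator. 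Splitting the double sum by the size of $\|\qq\wedge\qq'\|$, the ``transverse'' contribution is bounded by $F_\psi(Q_1,Q_2)^2$, which is $o(F_\psi(Q_1,Q_2))$ since $F_\psi \to 0$; the ``near-collinear'' contribution is bounded by $\phi(Q_1) F_\psi(Q_1,Q_2)$, which is $o(F_\psi(Q_1,Q_2))$ precisely because $\phi(Q_1)/F_\psi(Q_1,Q_2) \to 0$. The hypothesis $Q_1/\Irr(\Lambda)\to\infty$ reenters here to limit the count of near-collinear primitive pairs in $\Lambda$ at scale $Q_1$. Combining all estimates yields $\lebesgue_\KK(W_{\psi,\Lambda}(Q_1,Q_2)) \sim S_1 \sim \eta_\greeks F_\psi(Q_1,Q_2)$, establishing \eqref{estimates}.
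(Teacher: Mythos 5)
Your architecture is the same as the paper's: a first-moment computation over primitive vectors (the paper's Lemma \ref{lemmaestimates1}, including the M\"obius/density-$1/\zeta(\Dimsum)$ step and the $\pm$ halving), a second-moment bound on pairwise intersections (Lemma \ref{lemmaestimates2}), and a combination step — you use Bonferroni where the paper uses Cauchy--Schwarz ($\lebesgue_\KK(W) \geq \|f\|_1^2/\|f\|_2^2$), but under the hypothesis $F_\psi(Q_1,Q_2)\to 0$ these are interchangeable. Your reading of which hypothesis kills which error term is also correct.

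The genuine gap is in the second moment. Your claim that distinct $\rr,\rr'\in\Lambda'$ have linearly independent $\qq,\qq'$ is false: distinct primitive vectors can share the same $\qq$ (e.g.\ $(\pp,\qq)$ and $(\pp',\qq)$), and when $\qdim=1$ the vectors $\qq,\qq'$ are nonzero scalars and are \emph{never} independent. Consequently the denominator $\|\qq\Wedge\qq'\|^\pdim$ is the wrong invariant — it vanishes on exactly the pairs (parallel $\qq$'s, separated $\pp$'s) that your "near-collinear" bucket must control, so the proposed bound is vacuous there and the split by $\|\qq\Wedge\qq'\|$ cannot deliver the $\phi(Q_1)F_\psi$ estimate. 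The correct quantity is $\|\rr\Wedge\rr'\|\eucq\|\rr\|\dist(\rr',\R\rr)$, as in the paper's sublemma \eqref{lambdaintersection}. Two further points are then still missing. First, even with the right denominator, converting $\sum_{\rr'}(\psi(\|\rr'\|)/\dist(\rr',\R\rr))^\pdim$ into a cylindrical integral produces $\int x^{\qdim-2}\,\dee x$ near the axis, which diverges when $\qdim=1$; the paper needs a separate argument for $\qdim=1$ using that $\Delta_\psi(\rr)$ is a ball $B(\pp/q,\Psi(q))$ (cf.\ \eqref{Deltapsir1dim}). Second, the lattice-sum-to-integral comparison around the line $\R\rr$ fails uniformly if $\rr$ lies in a hyperplane of $\Lambda$ of small covolume; the global hypothesis $Q_1/\Irr(\Lambda)\to\infty$ does not by itself exclude this. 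The paper introduces the per-vector irregularity $\Irr(\rr)$, restricts the first moment to $\Irr(\rr)\leq K$ (showing via Lemma \ref{lemmaregular} that the irregular vectors contribute negligibly), and uses regularity to build an approximately cylindrical fundamental domain (Claim \ref{claimfi}) before summing over $\rr'$. Some version of this averaging-over-$\rr$ device is needed to make your "limit the count of near-collinear pairs" step rigorous.
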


\begin{remark*}
The ``irregularity'' $\Irr(\Lambda)$ should be thought of as a way to measure how far away $\Lambda$ is from the ``standard lattice'' $\Lambda_\ast$. The precise formula for $\Irr(\Lambda)$ is used in only two places: the proof of Lemma \ref{lemmaregular} and the proof of Claim \ref{claimtopconvergence}.
\end{remark*}

The following corollary is what we will actually use in the proof of Theorem \ref{maintheoremv2}:

\begin{corollary}
\label{corollaryestimates}
Let $\psi:(0,\infty)\to (0,\infty)$ be a nice approximation function and let the sequence $(N_k)_1^\infty$ be as in Lemma \ref{lemmaNk}. Fix $\omega\in E^*$, and let
\begin{align*}
k &= |\omega|,&
g_\omega &= g_{\delta\log(N^k)} u_{\pi(\omega)},&
\Lambda_\omega &= g_\omega \Lambda_\ast.
\end{align*}
Then for all $Q^k \leq Q_1 \leq Q_2 \leq Q^{k + 1}$,
\[
\lebesgue_\KK\big(\KK_\omega\cap W_\psi(Q_1,Q_2)\big) \eqsim{\substack{Q_1/(Q^k \Irr(\Lambda_\omega)) \to \infty \\ \beta \to 0 \\ M_\psi/\beta \to 0 \\ F_\psi(Q^k,Q_1)/\beta \to 0 \\ F_\psi(Q_2,Q^{k +  1})/\beta \to 0}} (N^k)^{-\Dimprod} \eta_\greeks \beta.
\]
\end{corollary}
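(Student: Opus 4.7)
The strategy is to pull the set $\KK_\omega\cap W_\psi(Q_1,Q_2)$ back to $\KK$ via the affine bijection $\Phi_\omega$ and rewrite it as an instance of $W_{\widetilde\psi,\Lambda_\omega}$ for an appropriately rescaled function $\widetilde\psi$ and the lattice $\Lambda_\omega$. Theorem \ref{theoremestimates} then applies directly, and the corollary is obtained by checking that the rescaling preserves $F_\psi$ and $\phi$, and that the convergence hypotheses match up via Lemma \ref{lemmaNk}.

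First I would carry out the change of variables. Write $A = \Phi_\omega(A') = \pi(\omega) + A'/N^k$ with $A'\in\KK$, and for $(\pp_0,\qq_0)\in\Z^\Dimsum$ compute
\[
A\qq_0 - \pp_0 = \frac{1}{N^k} A'\qq_0 - (\pp_0 - \pi(\omega)\qq_0).
\]
Multiplying through by $(N^k)^{n/(m+n)} = N^k/Q^k$, the inequality $\|A\qq_0 - \pp_0\|\leq\psi(\|\qq_0\|)$ becomes $\|A'\qq - \pp\|\leq \widetilde\psi(\|\qq\|)$, where
\[
(\pp,\qq) \df g_{\delta\log N^k} u_{\pi(\omega)}(\pp_0,\qq_0)\in\Lambda_\omega,
\qquad
\widetilde\psi(q)\df (N^k)^{n/(m+n)}\psi(Q^k q).
\]
The constraint $Q_1\leq\|\qq_0\|\leq Q_2$ translates to $Q_1/Q^k\leq\|\qq\|\leq Q_2/Q^k$, so
\[
\KK_\omega\cap W_\psi(Q_1,Q_2) \;=\; \Phi_\omega\bigl(W_{\widetilde\psi,\Lambda_\omega}(Q_1/Q^k,\,Q_2/Q^k)\bigr),
\]
and since $\Phi_\omega$ scales Lebesgue measure by $(N^k)^{-\Dimprod}$, the problem reduces to estimating $\lebesgue_\KK\bigl(W_{\widetilde\psi,\Lambda_\omega}(Q_1/Q^k,Q_2/Q^k)\bigr)$.

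Next I would verify two change-of-variables identities: substituting $q'=Q^k q$ in the definition of $F_{\widetilde\psi}$ gives $F_{\widetilde\psi}(Q_1/Q^k,Q_2/Q^k) = F_\psi(Q_1,Q_2)$, and a direct computation gives $\phi_{\widetilde\psi}(Q_1/Q^k) = \phi(Q_1)$, where $\phi_{\widetilde\psi}$ and $\phi$ are defined by \eqref{phidef}. Moreover, since $\psi/\psi_\ast$ is nonincreasing, so is $\widetilde\psi/\psi_\ast$, so Theorem \ref{theoremestimates} applies. To get the corollary it remains to identify the convergence hypotheses of Theorem \ref{theoremestimates} with those stated. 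The condition $Q_1/(Q^k\Irr(\Lambda_\omega))\to\infty$ is precisely $(Q_1/Q^k)/\Irr(\Lambda_\omega)\to\infty$. For the other two, use Lemma \ref{lemmaNk} and the additivity of $F_\psi$:
\[
F_\psi(Q_1,Q_2) = F_\psi(Q^k,Q^{k+1}) - F_\psi(Q^k,Q_1) - F_\psi(Q_2,Q^{k+1}) = \beta + O(M_\psi) - F_\psi(Q^k,Q_1) - F_\psi(Q_2,Q^{k+1}),
\]
so the hypotheses $\beta\to 0$, $M_\psi/\beta\to 0$, $F_\psi(Q^k,Q_1)/\beta\to 0$, $F_\psi(Q_2,Q^{k+1})/\beta\to 0$ yield $F_\psi(Q_1,Q_2)\sim\beta\to 0$. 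Combined with $\phi(Q_1)\leq M_\psi$, this also gives $\phi(Q_1)/F_\psi(Q_1,Q_2)\lesssim M_\psi/\beta\to 0$. Applying Theorem \ref{theoremestimates} yields
\[
\lebesgue_\KK\bigl(W_{\widetilde\psi,\Lambda_\omega}(Q_1/Q^k,Q_2/Q^k)\bigr) \sim \eta_\greeks F_\psi(Q_1,Q_2) \sim \eta_\greeks \beta,
\]
and multiplying by $(N^k)^{-\Dimprod}$ gives the corollary.

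I do not expect a serious obstacle: the argument is essentially a dynamical change-of-variables unwinding, and the only subtlety is bookkeeping. The computation that determines the right choice of $\widetilde\psi$ and the right rescaling is exactly encoded in the conjugation relation \eqref{deltamotivation}, and the exponent $n/(m+n)$ arises because $g_{\delta t}$ is the unique diagonal flow whose conjugation action on $u_A$ is the scalar $e^{-t}$. The only place real care is needed is in converting the three-parameter convergence stated in the corollary into the three-parameter convergence demanded by Theorem \ref{theoremestimates}, which is handled by the additivity identity above together with Lemma \ref{lemmaNk}.
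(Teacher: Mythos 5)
Your proposal is correct and follows essentially the same route as the paper: the paper's proof applies Theorem \ref{theoremestimates} to exactly the same data ($\Lambda' = \Lambda_\omega$, $Q_i' = Q_i/Q^k$, $\psi'(q) = (Q^k)^{\qdim/\pdim}\psi(Q^k q)$, which equals your $\widetilde\psi$ since $(Q^k)^{\qdim/\pdim} = (N^k)^{\qdim/(\pdim+\qdim)}$), and invokes the same identities $\KK_\omega\cap W_\psi(Q_1,Q_2) = \Phi_\omega(\KK\cap W_{\psi',\Lambda'}(Q_1',Q_2'))$, $F_{\psi'}(Q_1',Q_2') = F_\psi(Q_1,Q_2)$, and $\phi(Q_1)\leq M_\psi$, together with \eqref{Nkdef}. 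Your write-up just makes explicit the conjugation computation and the quantifier bookkeeping that the paper leaves to the reader.
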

\begin{proof}
This follows from applying Theorem \ref{theoremestimates} to the lattice $\Lambda' = \Lambda_\omega$, the numbers $Q_i' = Q_i/Q^k$ ($i = 1,2$), and the function $\psi'(q) = (Q^k)^{\qdim/\pdim} \psi(Q^k q)$, and then using \eqref{Nkdef} and the formulas
\begin{align*}
\KK_\omega\cap W_{\psi,\Lambda_*}(Q_1,Q_2) &= \Phi_\omega\big(\KK \cap W_{\psi',\Lambda'}(Q_1',Q_2')\big)\\
F_{\psi'}(Q_1',Q_2') &= F_\psi(Q_1,Q_2) = F_\psi(Q^k,Q^{k + 1}) - [F_\psi(Q^k,Q_1) + F_\psi(Q_2,Q^{k + 1})]\\
\phi(Q_1) &\leq M_\psi.
\qedhere\end{align*}
\end{proof}

This corollary is the only part of this section which is used in the proof of Theorem \ref{maintheoremv2}, so Sections \ref{sectioneasydirection} and \ref{sectionharddirection} can be read independently of this section.

The proof of Theorem \ref{theoremestimates} will consist of three main steps:
\begin{itemize}
\item (Lemma \ref{lemmaestimates1}) estimating the measure of $W_{\psi,\Lambda}(Q_1,Q_2)$ ``with multiplicity'', i.e. estimating the sum of the measures of the sets $\Delta_\psi(\rr)$ ($\rr\in\Lambda$, $Q_1 \leq \|\qq\| \leq Q_2$);
\item (Lemma \ref{lemmaestimates2}) proving a ``quasi-independence on average'' result for a certain subset of $\Lambda$, the ``regular primitive'' vectors;
\item combining to estimate the measure of $W_{\psi,\Lambda}(Q_1,Q_2)$ ``without multiplicity''.
\end{itemize}

Before going into the proof of the main lemmas \ref{lemmaestimates1} and \ref{lemmaestimates2}, we need some preliminaries. For these preliminaries, we forget about the numbers $Q_1,Q_2$ and the function $\psi$, and concentrate only on the lattice $\Lambda \in \Omega_\Dimsum$.

\subsection{Preliminaries}
We collect here various facts needed in the proof of Theorem \ref{theoremestimates}.
%In this subsection and the following one, we fix $\generic \in \N$ and a norm $\|\cdot\|$ on $\R^\generic$. In applications, we will have $\generic = \dimsum$ and the norm $\|\cdot\|$ will be given by \eqref{rnormdef}.

%\begin{theorem}[Minkowski's theorem]
%\label{theoremminkowski}
%Let $V$ be a real vector space with norm $\|\cdot\|$, let $\Lambda$ be a lattice in $V$, and let $\minkowski(\Lambda)$ and $\Covol(\Lambda)$ denote the first Minkowski minimum and the covolume of $\Lambda$, respectively, with respect to the norm $\|\cdot\|$.\Footnote{We recall that the \emph{first Minkowski minimum} of $\Lambda$ with respect to $\|\cdot\|$ is defined by the formula $\minkowski(\Lambda) = \min_{\rr\in\Lambda\butnot\{\0\}} \|\rr\|$, while the \emph{covolume} of $\Lambda$ with respect to $\|\cdot\|$ is the volume of a fundamental domain of $\Lambda$ divided by the volume of the unit ball of $\|\cdot\|$.} Then
%\[
%\minkowski(\Lambda) \leq 2\Covol^{1/\dim(V)}(\Lambda).
%\]
%\end{theorem}

\begin{lemma}
\label{lemmacovolumedual}
Let $\Lambda$ be a unimodular lattice in $\R^\generic$, and let
\[
\Lambda^* \df \{\ss\in\R^\generic : \rr\cdot\ss \in \Z \all \rr\in\Lambda\}
\]
be its dual lattice. Let $V\leq \R^\generic$ be a subspace such that $\Lambda\cap V$ is a lattice in $V$, and let
\[
V^\perp \df \{\ss\in\R^\generic : \rr\cdot\ss = 0 \all \rr\in V\}
\]
be its dual subspace. Then
\[
\Covol(\Lambda\cap V) \approx \Covol(\Lambda^*\cap V^\perp).
\]
\end{lemma}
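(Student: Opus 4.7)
The plan is to prove the exact equality with respect to Euclidean volumes and then transfer to the paper's convention via equivalence of norms on subspaces. Fix the standard inner product on $\R^\generic$, and let $\Covol_0$ denote covolume computed with the induced volume form on each subspace. For any subspace $W \leq \R^\generic$, the volume on $W$ induced from the restriction of $\|\cdot\|$ and the Euclidean volume differ by a multiplicative factor bounded above and below by constants depending only on $\constants$ (e.g.\ via John's ellipsoid applied to the $\|\cdot\|$-unit ball of $W$, since $\dim W \leq \generic$). So it suffices to establish the exact identity
\[
\Covol_0(\Lambda\cap V) = \Covol_0(\Lambda^* \cap V^\perp).
\]

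The argument assembles three standard ingredients. First, the projection formula: if $\pi\colon \R^\generic \to V^\perp$ is orthogonal projection and $\Lambda\cap V$ is a full-rank lattice in $V$ (which is exactly the hypothesis of the lemma), then $\pi(\Lambda)$ is a lattice in $V^\perp$ and
\[
1 = \Covol_0(\Lambda) = \Covol_0(\Lambda\cap V)\cdot \Covol_0(\pi(\Lambda)).
\]
Second, the duality formula: for any full-rank lattice $L$ in an inner-product space $W$, one has $\Covol_0(L)\cdot\Covol_0(L^\vee) = 1$, where $L^\vee \subset W$ is the dual of $L$ inside $W$. Third, the identification
\[
\pi(\Lambda)^\vee = \Lambda^* \cap V^\perp,
\]
where the dual on the left is computed inside $V^\perp$. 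This is a direct check: for $v \in V^\perp$ and $\lambda \in \Lambda$, $\langle v,\lambda\rangle = \langle v,\pi(\lambda)\rangle$ since $v \perp V$, so $v$ pairs integrally with every element of $\pi(\Lambda)$ iff $v \in \Lambda^*$. As a byproduct, this shows $\Lambda^* \cap V^\perp$ is automatically a full-rank lattice in $V^\perp$, so the duality formula is applicable to it.

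Chaining the three displayed formulas,
\[
\Covol_0(\Lambda\cap V) = \frac{1}{\Covol_0(\pi(\Lambda))} = \Covol_0(\pi(\Lambda)^\vee) = \Covol_0(\Lambda^* \cap V^\perp),
\]
which after reinserting the bounded comparison constants gives the stated $\approx$. I do not foresee a substantive obstacle: the only points requiring care are (i) checking that the full-rank hypothesis on $\Lambda\cap V$ is exactly what makes $\pi(\Lambda)$ discrete and the projection formula valid, and (ii) verifying that the comparability between Euclidean and $\|\cdot\|$-induced volumes on a subspace is uniform in the subspace, which follows because the equivalence constants between any two norms on $\R^\generic$ depend only on $\constants$ and the subspace dimension $\leq \generic$.
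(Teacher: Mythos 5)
Your proposal is correct, and it takes a genuinely different route from the paper's proof of Lemma \ref{lemmacovolumedual}. Both arguments begin with the same reduction to the Euclidean case (the paper's ``without loss of generality suppose that the norm is Euclidean'' is exactly your bounded-comparison step, which you spell out more carefully, including the uniformity in the subspace). But for the exact Euclidean identity the paper works with exterior algebra: it chooses a basis $\rr_1,\ldots,\rr_\generic$ of $\Lambda$ adapted to $V$, takes the dual basis $\ss_1,\ldots,\ss_\generic$ (so that $\ss_{k+1},\ldots,\ss_\generic$ is a basis of $\Lambda^*\cap V^\perp$), and uses the wedge-product identity $(\rr_1\Wedge\cdots\Wedge\rr_k)\Wedge\omega = \omega\cdot(\ss_{k+1}\Wedge\cdots\Wedge\ss_\generic)$ to conclude $\|\rr_1\Wedge\cdots\Wedge\rr_k\| = \|\ss_{k+1}\Wedge\cdots\Wedge\ss_\generic\|$. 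You instead factor through the projected lattice $\pi(\Lambda)$, combining the multiplicativity of covolume under orthogonal projection, the identity $\Covol(L)\cdot\Covol(L^\vee)=1$, and the identification $\pi(\Lambda)^\vee = \Lambda^*\cap V^\perp$ (a fact the paper itself invokes later, in the proof of Claim \ref{claimfi}). Your route is arguably more elementary in that it avoids Grassmann algebra and the Hodge-type duality identity, at the cost of needing three separate standard lemmas; the paper's computation is more self-contained once one accepts the wedge-product identities cited from the linear-algebra reference. Both yield the same exact equality of Euclidean covolumes, and your handling of the hypotheses (full rank of $\Lambda\cap V$ guaranteeing discreteness of $\pi(\Lambda)$, and full rank of $\Lambda^*\cap V^\perp$ coming out as a byproduct) is sound.
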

Here the \emph{covolume} $\Covol(\Gamma)$ of a lattice $\Gamma$ in a normed vector space $(V,\|\cdot\|)$ is the volume of a fundamental domain\Footnote{We recall that a set $\DD\subset\R^\generic$ is called a \emph{fundamental domain} of $\Lambda$ if $\DD + \Lambda = \R^\generic$ and $\Int(\DD)\cap (\rr + \Int(\DD)) = \emptyset$ for all $\rr\in\Lambda\butnot\{\0\}$. In this paper, we make the additional assumption that $\DD\butnot\Int(\DD)$ has Lebesgue measure zero, so that $\lebesgue_{\R^\generic} = \sum_{\rr\in\Lambda} \lebesgue_{\rr + \DD}$. \label{footnotefundamental}} of $\Gamma$ with respect to $\lebesgue_V$, where $\lebesgue_V$ denotes Lebesgue measure of $V$ normalized so that the unit ball of $\|\cdot\|$ in $V$ has measure 1.\Footnote{Note that this definition disagrees slightly with the notion of covolume used in \6\ref{subsectioncorrespondence}, in which Lebesgue measure was assumed to be normalized so that $\lebesgue([0,1]^d) = 1$. Hopefully this will not cause any confusion.}
% (Equivalently, $\Covol(\Gamma)$ is the measure of a fundamental domain of $\Gamma$ with respect to Lebesgue measure on $V$ normalized so that the measure of the unit ball is 1.)
%normalized so that the unit ball of $\|\cdot\|$ in $V$ has measure equal to $V_{\dim(V)}$, where $V_k$ denotes the volume of the Euclidean unit ball in $\R^k$.\Footnote{The choice of normalization of $\lebesgue_V$ is important only insofar as the $\lebesgue_V$-measure of the unit ball of $V$ needs to be set equal to a constant. The value of this constant is not important, but is chosen here to simplify the calculations in the subsequent proof, as well as to agree with the standard definition of covolume in the case where $V$ is $\R^d$ with the Euclidean norm.}
\begin{proof}
Without loss of generality suppose that the norm $\|\cdot\|$ is Euclidean, and we will show that in this case
\[
V_{\dim(V)} \Covol(\Lambda\cap V) = V_{\dim(V^\perp)} \Covol(\Lambda^*\cap V^\perp),
\]
where $V_k$ denotes the volume of the Euclidean unit ball in $\R^k$.

Indeed, let $\rr_1,\ldots,\rr_\generic$ be a basis of $\Lambda$ such that $\rr_1,\ldots,\rr_k$ is a basis of $V$, where $k = \dim(V)$. Let $\ss_1,\ldots,\ss_\generic$ denote the dual basis, so that $\ss_{k + 1},\ldots,\ss_\generic$ is a basis of $\Lambda^*\cap V^\perp$. Since $\Lambda$ is unimodular,
\[
(\rr_1\Wedge\cdots\Wedge\rr_k)\Wedge(\rr_{k + 1}\Wedge\cdots\Wedge\rr_\generic) = 1 = (\rr_{k + 1}\Wedge\cdots\Wedge\rr_\generic)\cdot(\ss_{k + 1}\Wedge\cdots\Wedge\ss_\generic)
\]
(see e.g. \cite[\63.2 and \65.5.1]{Winitzki}), and thus the equality
\[
(\rr_1\Wedge\cdots\Wedge\rr_k)\Wedge\omega = \omega\cdot(\ss_{k + 1}\Wedge\cdots\Wedge\ss_\generic) \;\;\;\;\; \left(\omega\in {\bigwedge}^{\generic - k} \R^\generic\right)
\]
can be checked by verifying that it holds on a basis of $\bigwedge^{\generic - k} \R^\generic$. So
\begin{align*}
&V_{\dim(V)} \Covol(\Lambda\cap V) = \|\rr_1\Wedge\cdots\Wedge \rr_k\| = \max_{\|\omega\| = 1} \|(\rr_1\Wedge\cdots\Wedge\rr_k)\Wedge\omega\|\\
= &\max_{\|\omega\| = 1} \|\omega\cdot(\ss_{k + 1}\Wedge\cdots\Wedge\ss_\generic)\| = \|\ss_{k + 1}\Wedge\cdots\Wedge \ss_\generic\| = V_{\dim(V^\perp)} \Covol(\Lambda^*\cap V^\perp).
\qedhere\end{align*}
%The lemma is obvious when $\Lambda = \Z^\generic$ and $V = \R^k \times \{0\}^{d - k}$ for some $k$. For the general case, we
%Choose $g\in \SL_\generic(\R)$ such that $\Lambda = g(\Z^\generic)$ and $V = g(\R^k \times \{0\}^{d - k})$. Then $\Lambda^* = (g^T)^{-1}(\Z^\generic)$ and $V = (g^T)^{-1}(\{0\}^k\times \R^{d - 1})$, where $g^T$ denotes the transpose of $g$. We have
\end{proof}

Another important fact about dual lattices is that if $\lambda_i$ denotes the $i$th successive Minkowski minimum, then
\begin{equation}
\label{casselsdual}
\lambda_i(\Lambda^*) \approx \lambda^{-1}_{d + 1 - i}(\Lambda);
\end{equation}
see e.g. \cite[Theorem VIII.5.VI]{Cassels3}.

\begin{notation}
\label{notation(rho)}
If $\DD\subset\R^\generic$ and $f:\R^\generic\to\Rplus$, we let
\begin{align*}
f^{(\DD)}(\rr) &= \sup_{\xx\in \rr + \DD} f(\xx),&
f_{(\DD)}(\rr) &= \inf_{\xx\in \rr + \DD} f(\xx),
\end{align*}
and if $\rho > 0$ we let
\begin{align*}
f^{(\rho)} &= f^{(\ball\0\rho)},&
f_{(\rho)} &= f_{(\ball\0\rho)}.
\end{align*}
\end{notation}

The following fundamental observation allows us to compare sums over the lattice $\Lambda$ with integrals:

\begin{observation}
\label{observationlatticecomparison}
Let $f:\R^\generic\to\Rplus$, and let $\DD$ be a fundamental domain of a unimodular lattice $\Lambda \in \Omega_\generic$ (cf. Footnote \ref{footnotefundamental}). Then
\begin{equation}
\label{latticecomparison1}
\int_{\R^\generic} f_{(\DD)}(\rr) \;\dee\rr \leq \sum_{\rr\in\Lambda} f(\rr) \leq \int_{\R^\generic} f^{(\DD)}(\rr) \;\dee\rr.
\end{equation}
Furthermore, if $\cdl$ denotes the codiameter of $\Lambda$, i.e.
\[
\cdl = \sup_{\xx\in\R^\generic} \dist(\xx,\Lambda),
\]
then
\begin{equation}
\label{latticecomparison2}
\int_{\R^\generic} f_{(\cdl)}(\rr) \;\dee\rr \leq \sum_{\rr\in\Lambda} f(\rr) \leq \int_{\R^\generic} f^{(\cdl)}(\rr) \;\dee\rr.
\end{equation}
\end{observation}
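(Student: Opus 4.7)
The plan is to establish \eqref{latticecomparison1} directly and then deduce \eqref{latticecomparison2} by a suitable choice of fundamental domain.

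For \eqref{latticecomparison1}, the key observation is that $-\DD$ is also a fundamental domain of $\Lambda$ (since $\Lambda = -\Lambda$), so the translates $\{\ss - \DD : \ss \in \Lambda\}$ tile $\R^\generic$ up to a set of Lebesgue measure zero. This lets me partition the integral on the right-hand side as
\[
\int_{\R^\generic} f^{(\DD)}(\rr) \;\dee\rr = \sum_{\ss \in \Lambda} \int_{\ss - \DD} f^{(\DD)}(\rr) \;\dee\rr.
\]
For each $\rr \in \ss - \DD$ we have $\ss \in \rr + \DD$, whence $f^{(\DD)}(\rr) = \sup_{\xx \in \rr + \DD} f(\xx) \geq f(\ss)$; integrating over $\ss - \DD$ and using $\lebesgue(\DD) = 1$ (unimodularity) gives $\int_{\ss - \DD} f^{(\DD)}(\rr)\;\dee\rr \geq f(\ss)$, and summing over $\ss \in \Lambda$ yields the upper bound. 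The lower bound follows by replacing $\sup$ with $\inf$ in the same argument: for $\rr \in \ss - \DD$ we have $f_{(\DD)}(\rr) \leq f(\ss)$, so $\int_{\ss - \DD} f_{(\DD)}(\rr)\;\dee\rr \leq f(\ss)$, and summation gives $\int_{\R^\generic} f_{(\DD)}(\rr)\;\dee\rr \leq \sum_{\ss \in \Lambda} f(\ss)$.

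For \eqref{latticecomparison2}, I will apply \eqref{latticecomparison1} with $\DD$ taken to be the Voronoi cell $V = \{\xx \in \R^\generic : \|\xx\| \leq \|\xx - \rr\| \all \rr \in \Lambda \butnot \{\0\}\}$, which is a fundamental domain of $\Lambda$. By definition of the codiameter, every $\xx \in V$ satisfies $\|\xx\| = \dist(\xx,\Lambda) \leq \cdl$, so $V \subset \ball\0\cdl$. Monotonicity of the sup and inf operations then yields $f^{(V)}(\rr) \leq f^{(\cdl)}(\rr)$ and $f_{(V)}(\rr) \geq f_{(\cdl)}(\rr)$ pointwise, and the desired inequalities follow after integrating.

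The statement is elementary measure theory and no step should present a real obstacle; the only subtlety worth flagging is that one must partition $\R^\generic$ by translates of $-\DD$ rather than $\DD$ in order to orient the inequalities correctly relative to the definitions of $f^{(\DD)}$ and $f_{(\DD)}$.
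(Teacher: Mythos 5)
Your proof is correct and follows essentially the same route as the paper: the paper likewise bounds $f(\rr)$ between $\int_{\rr-\DD} f_{(\DD)}$ and $\int_{\rr-\DD} f^{(\DD)}$ and sums over $\rr\in\Lambda$, then specializes to the Dirichlet (Voronoi) domain, which is contained in $\ball\0\cdl$, to deduce \eqref{latticecomparison2}. Your flagged subtlety about tiling by translates of $-\DD$ is exactly the orientation the paper uses.
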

\begin{proof}
For each $\rr\in\Lambda$,
\[
\int_{\rr - \DD} f_{(\DD)}(\rr') \;\dee \rr' \leq f(\rr) \leq \int_{\rr - \DD} f^{(\DD)}(\rr') \;\dee \rr',
\]
and summing over all $\rr\in\Lambda$ demonstrates \eqref{latticecomparison1}. Next, let $\DD$ be the Dirichlet fundamental domain for $\Lambda$ centered at $\0$, i.e. the set
\begin{equation}
\label{dirichletdomain}
\DD = \{\xx\in \R^\generic : \dist(\rr,\xx) \geq \|\xx\| \all \rr\in\Lambda\}.
\end{equation}
Then $\sup_{\xx\in\DD} \|\xx\| = \cdl$, so $\DD \subset \ball\0\cdl$ and thus
\[
\int_{\rr - \DD} f_{(\cdl)}(\rr') \;\dee \rr'
\leq \int_{\rr - \DD} f_{(\DD)}(\rr') \;\dee \rr' \leq f(\rr) \leq \int_{\rr - \DD} f^{(\DD)}(\rr') \;\dee \rr',
\leq \int_{\rr - \DD} f^{(\cdl)}(\rr') \;\dee \rr',
\]
and summing over all $\rr\in\Lambda$ demonstrates \eqref{latticecomparison2}.
\end{proof}

When $f(\rr)$ depends only on $\|\rr\|$, a spherical coordinates calculation allows us to simplify the integrals of Observation \ref{observationlatticecomparison}, yielding the following upper bound:

\begin{corollary}
\label{corollaryspherical}
Fix $\cdl \leq Q_1 \leq Q_2$, and let $f:[Q_1,Q_2]\to\Rplus$ be a decreasing function. Then
\begin{align*}
\sum_{\substack{\rr\in\Lambda \\ Q_1 \leq \|\rr\| \leq Q_2}} f(\|\rr\|)
&\lessapprox Q_1^\generic f(Q_1) + \int_{Q_1}^{Q_2} f(r) r^{\generic - 1} \;\dee r.
\end{align*}
\end{corollary}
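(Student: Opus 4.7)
The plan is to reduce a sum over $\Lambda$ to a one-dimensional radial integral via Observation~\ref{observationlatticecomparison} and monotonicity of $f$, and then to split the integral into a boundary contribution near $Q_1$ and the main contribution over $[Q_1, Q_2]$.

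First, I would let $g(\rr) = f(\|\rr\|)\,[Q_1 \leq \|\rr\| \leq Q_2]$ and apply the upper half of inequality \eqref{latticecomparison2}, giving
\[
\sum_{\substack{\rr\in\Lambda \\ Q_1 \leq \|\rr\|\leq Q_2}} f(\|\rr\|) \;=\; \sum_{\rr\in\Lambda} g(\rr) \;\leq\; \int_{\R^\generic} g^{(\cdl)}(\rr)\,\dee\rr.
\]
Using Notation~\ref{notation(rho)}, $g^{(\cdl)}(\rr)$ is supported on $\{Q_1-\cdl \leq \|\rr\| \leq Q_2+\cdl\}$, and on this set the fact that $f$ is decreasing and $g$ is supported in $\{\|\rr\|\geq Q_1\}$ gives the pointwise bound $g^{(\cdl)}(\rr) \leq f\bigl(\max(Q_1,\|\rr\|-\cdl)\bigr)$.

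Next, I would convert the $\R^\generic$-integral to a radial integral. The argument of Lemma~\ref{lemmasphericalcoordinates} goes through verbatim for the norm on $\R^\generic$ (with its unit-ball volume absorbed into $\lessapprox$), yielding
\[
\int_{\R^\generic} g^{(\cdl)}(\rr)\,\dee\rr \;\lessapprox\; \int_{Q_1-\cdl}^{Q_2+\cdl} r^{\generic-1}\,f\bigl(\max(Q_1,\,r-\cdl)\bigr)\,\dee r.
\]
I would then split at $r = Q_1+\cdl$. On $[Q_1-\cdl,\,Q_1+\cdl]$ the integrand equals $r^{\generic-1} f(Q_1)$, so its contribution is at most $2\cdl\,(Q_1+\cdl)^{\generic-1}f(Q_1)$; since $\cdl \leq Q_1$, this is $\lessapprox Q_1^\generic f(Q_1)$. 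On $[Q_1+\cdl,\,Q_2+\cdl]$ I would substitute $u = r - \cdl$ and use $u+\cdl \leq 2u$ (again from $\cdl \leq Q_1 \leq u$) to bound the contribution by $\lessapprox \int_{Q_1}^{Q_2} u^{\generic-1} f(u)\,\dee u$. Summing the two pieces gives the claimed inequality.

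There is no substantive obstacle here: the entire argument is bookkeeping around the hypothesis $\cdl \leq Q_1$, which is what lets us (i) absorb the smearing $\pm\cdl$ of the annulus $[Q_1,Q_2]$ into the stated bounds and (ii) replace $(u+\cdl)^{\generic-1}$ by $u^{\generic-1}$ up to a constant depending only on $\generic$. The step most worth double-checking is that the boundary region near $Q_2$ does not produce an analogous $Q_2^\generic f(Q_2)$ term, but after the substitution $u = r-\cdl$ the entire region $r\in[Q_1+\cdl,Q_2+\cdl]$ collapses neatly into $u\in[Q_1,Q_2]$, so no such term is needed.
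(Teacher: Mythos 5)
Your proposal is correct and follows essentially the same route as the paper: both bound the sum by $\int F(\|\rr\|-\cdl)\,\dee\rr$ with $F(q)=f(Q_1\vee q)\,[q\leq Q_2]$ via \eqref{latticecomparison2}, pass to a radial integral via (the obvious extension of) Lemma \ref{lemmasphericalcoordinates}, and absorb the $\pm\cdl$ smearing using $\cdl\leq Q_1$. The only cosmetic difference is where you split the radial integral ($r=Q_1+\cdl$ versus the paper's shift $r\mapsto r+\cdl$ followed by a split at $Q_1$), which changes nothing.
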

\begin{proof}
Write
\[
F(q) = \begin{cases}
f(Q_1 \vee q) & \text{if } q \leq Q_2\\
0 & \text{if } q > Q_2
\end{cases}.
\]
Then
\begin{align*}
\sum_{\substack{\rr\in\Lambda \\ Q_1 \leq \|\rr\| \leq Q_2}} f(\|\rr\|)
&\leq \int_{\R^\generic} F(\|\rr\| - \cdl) \;\dee\rr \by{\eqref{latticecomparison2}}\\
&\lessapprox \int_0^\infty r^{\generic - 1} F(r - \cdl) \;\dee r \note{Lemma \ref{lemmasphericalcoordinates}}\\
&= \int_{-\cdl}^\infty (r + \cdl)^{\generic - 1} F(r) \;\dee r\\
&= \frac{1}{\generic} (Q_1 + \cdl)^\generic f(Q_1) + \int_{Q_1}^{Q_2} (r + \cdl)^{\generic - 1} f(r) \;\dee r \noreason\\
&\approx Q_1^\generic f(Q_1) + \int_{Q_1}^{Q_2} r^{\generic - 1} f(r)\;\dee r. \since{$\cdl \leq Q_1$}
&\qedhere\end{align*}
\end{proof}

\subsection{Regular lattice vectors}
\label{subsectionirregularity}
The goal of this subsection is to find a ``large'' subset of $\Lambda$ along which quasi-independence holds. As noted in Section \ref{sectionheuristic}, we will need to restrict at least to the set of primitive vectors. But actually we may need to restrict further; some primitive vectors may still be ``bad'' in the sense that including them in our set would cause problems for quasi-independence. It turns out that the right way to measure how ``bad'' a vector is in this sense is given by the following definition:

\begin{definition}
\label{definitionirregularity}
The \emph{irregularity} of a vector $\rr\in\Lambda\butnot\{\0\}$ (with respect to $\Lambda$) is the number
\begin{equation}
\label{irrdef}
\Irr(\rr) \df \frac{\|\rr\|}{\left(\minkowski(\Lambda^*\cap\rr^\perp)\right)^{\generic - 1}}\cdot
\end{equation}
\end{definition}
As before, $\minkowski$ denotes the first Minkowski minimum and $\Lambda^*$ denotes the dual lattice. Note that by Lemma \ref{lemmacovolumedual}, we have
\[
\|\rr\| \geq \Covol(\Lambda\cap \R\rr) \eucq \Covol(\Lambda^*\cap\rr^\perp),
\]
and thus Minkowski's theorem implies that the irregularity of $\rr$ is bounded from below (by a constant depending on $\|\cdot\|$). Intuitively, the irregularity of $\rr$ tells us about the ``worst hyperplane'' that $\rr$ is contained in, where ``worst'' means ``smallest covolume''. The advantage of vectors with low irregularity is that they can be used to construct fundamental domains of $\Lambda$ which are approximately cylindrical; cf. the proof of Lemma \ref{lemmaestimates2} below. The following lemma shows that there is a sense in which the irregularity is bounded from above ``on average'':

\begin{lemma}[Almost all lattice vectors are regular]
\label{lemmaregular}
If $\generic \geq 2$, then
\[
\lim_{K\to\infty} \sup_{\Lambda\in\Omega_\generic} \sup_{Q \geq K\Irr(\Lambda)} \frac{\#\{\rr \in \Lambda : \Irr(\rr) \geq K, \|\rr\|\leq Q\}}{Q^\generic} = 0,
\]
where $\Irr(\Lambda) \df \minkowski^{-(2\generic - 1)}(\Lambda)$ as in \eqref{IrrLambdadef}.
\end{lemma}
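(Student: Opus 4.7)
The plan is a union-bound argument fibered over primitive vectors of the dual lattice. First, unpacking the definition \eqref{irrdef}, a vector $\rr \in \Lambda \butnot \{\0\}$ with $\|\rr\| \leq Q$ and $\Irr(\rr) \geq K$ has $\minkowski(\Lambda^* \cap \rr^\perp) \leq (\|\rr\|/K)^{1/(\generic-1)} \leq \rho_0$, where $\rho_0 \df (Q/K)^{1/(\generic-1)}$. Hence there exists a primitive $\ss \in \Lambda^* \butnot \{\0\}$ with $\ss \cdot \rr = 0$ and $\|\ss\| \leq \rho_0$, yielding the union bound
\[
\#\{\rr \in \Lambda : \Irr(\rr) \geq K,\ \|\rr\| \leq Q\} \leq \sum_{\substack{\ss \in \Lambda^*\text{ primitive} \\ 0 < \|\ss\| \leq \rho_0}} \#\{\rr \in \Lambda \cap \ss^\perp : \|\rr\| \leq Q\}.
\]

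For the inner count, primitivity gives $\Covol(\Lambda^* \cap \R\ss) = \|\ss\|$, so Lemma \ref{lemmacovolumedual} yields $\Covol(\Lambda \cap \ss^\perp) \approx \|\ss\|$. A standard lattice-point count in the rank-$(\generic-1)$ subspace $\ss^\perp$ (comparison with the ball of radius $Q + \cdl$) then gives $\#\{\rr \in \Lambda \cap \ss^\perp : \|\rr\| \leq Q\} \lessapprox Q^{\generic-1}/\|\ss\|$ provided $Q$ dominates the covering radius of $\Lambda \cap \ss^\perp$. This is where the specific exponent in $\Irr(\Lambda) = \minkowski(\Lambda)^{-(2\generic-1)}$ enters the argument: since $\minkowski(\Lambda \cap \ss^\perp) \geq \minkowski(\Lambda)$ and $\prod_{i=1}^{\generic-1} \lambda_i(\Lambda \cap \ss^\perp) \approx \|\ss\|$ by Minkowski's second theorem, the top successive minimum (and hence the covering radius) is $\lessapprox \|\ss\|/\minkowski(\Lambda)^{\generic-2} \leq \rho_0/\minkowski(\Lambda)^{\generic-2}$. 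A short exponent computation then verifies that the hypothesis $Q \geq K \Irr(\Lambda)$, together with Minkowski's first theorem ($\minkowski(\Lambda) \lessapprox 1$ for unimodular $\Lambda$), forces this to be at most $Q$.

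For the outer sum, the same hypothesis $Q \geq K \Irr(\Lambda)$ also forces $\rho_0 \gtrapprox 1/\minkowski(\Lambda) \gtrapprox \lambda_\generic(\Lambda^*)$ via \eqref{casselsdual}, so $\rho_0$ dominates the codiameter of $\Lambda^*$. Relaxing primitivity (each hyperplane is now overcounted by a uniform constant) and applying Corollary \ref{corollaryspherical} to the unimodular lattice $\Lambda^*$ with $f(r) = 1/r$ yields
\[
\sum_{\substack{\ss \in \Lambda^*\butnot\{\0\} \\ \|\ss\| \leq \rho_0}} \frac{1}{\|\ss\|} \lessapprox \rho_0^{\generic-1},
\]
where the bound $\rho_0^{\generic-1}$ comes from integrating $r^{\generic-2}$ and requires exactly the hypothesis $\generic \geq 2$. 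Combining everything yields
\[
\#\{\rr \in \Lambda : \Irr(\rr) \geq K,\ \|\rr\| \leq Q\} \lessapprox Q^{\generic-1} \rho_0^{\generic-1} = \frac{Q^\generic}{K},
\]
so dividing by $Q^\generic$ gives a bound $\lessapprox 1/K$ that is uniform in $\Lambda$ and in $Q \geq K \Irr(\Lambda)$, hence tends to $0$ as $K \to \infty$.

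The main obstacle is the inner count estimate: both the covering-radius control needed to count points in $\Lambda \cap \ss^\perp$ by volume, and the outer sum over the dual lattice, must be simultaneously valid, and the exponent $2\generic - 1$ in the definition of $\Irr(\Lambda)$ is calibrated precisely so that the single assumption $Q \geq K \Irr(\Lambda)$ suffices for both. Once this calibration is established, the remaining work is a dimensional bookkeeping exercise.
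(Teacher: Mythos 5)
Your argument follows the paper's proof essentially step for step: the union bound over short primitive vectors of $\Lambda^*$, the covolume duality from Lemma \ref{lemmacovolumedual}, the covering-radius control of $\Lambda\cap\ss^\perp$ via Minkowski's second theorem (with the exponent check that $\tfrac{1}{\generic-1}+\tfrac{\generic-2}{2\generic-1}\leq 1$ exactly when $\generic\geq 2$), and the spherical summation over the dual lattice are all the same steps in the same roles.

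One step is stated too loosely. Corollary \ref{corollaryspherical} only controls the sum over dual vectors with $\|\ss\|\geq\Codiam{\Lambda^*}$, so invoking it does not account for the nonzero $\ss\in\Lambda^*$ with $\|\ss\|<\Codiam{\Lambda^*}$; observing that $\rho_0\gtrapprox\Codiam{\Lambda^*}$ lets you apply the corollary on the range $[\Codiam{\Lambda^*},\rho_0]$ but does not dispose of the shorter vectors. (Also, the parenthetical about each hyperplane being ``overcounted by a uniform constant'' when you relax primitivity is not literally true --- a primitive $\ss$ has up to $\rho_0/\|\ss\|$ multiples below the cutoff --- but it is also unnecessary, since for an upper bound you simply enlarge the index set.) The paper handles the short dual vectors by a separate term: there are $\lessapprox\Codiam{\Lambda^*}^{\generic}$ of them, each contributing at most $1/\minkowski(\Lambda^*)$, and Minkowski's second theorem together with \eqref{casselsdual} give
\[
\frac{\Codiam{\Lambda^*}^{\generic}}{\minkowski(\Lambda^*)}\lessapprox\Codiam{\Lambda^*}^{2\generic-1}\approx\minkowski^{-(2\generic-1)}(\Lambda)=\Irr(\Lambda)\leq Q/K=\rho_0^{\generic-1},
\]
so your final bound survives. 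Note that this is a second, independent place where the calibration of the exponent $2\generic-1$ in $\Irr(\Lambda)$ is needed, beyond the two uses you identify. With that term added, the proof is complete and matches the paper's.
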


\begin{proof}
Fix $K \geq 1$, $\Lambda\in\Omega_\generic$, and $Q\geq K\Irr(\Lambda)$. Fix $\rr\in\Lambda$ with $\Irr(\rr) \geq K$ and $\|\rr\|\leq Q$. Then
\[
\minkowski(\Lambda^*\cap \rr^\perp) = \left(\frac{\|\rr\|}{\Irr(\rr)}\right)^{1/(\generic - 1)} \leq S \df \left(\frac QK\right)^{1/(\generic - 1)},
\]
so there exists $\ss\in\Lambda^*_\prim\cap\rr^\perp$ such that $\|\ss\|\leq S$, where $\Lambda^*_\prim$ denotes the set of primitive vectors of $\Lambda^*$. Equivalently,
\[
\rr\in \bigcup_{\substack{\ss\in\Lambda^*_\prim \\ \|\ss\|\leq S}} \Lambda\cap\ss^\perp.
\]
It follows that
\begin{equation}
\label{regular1}
\#\{\rr \in \Lambda : \Irr(\rr) \geq K, \; \|\rr\|\leq Q\}
\leq \sum_{\substack{\ss\in\Lambda^*_\prim \\ \|\ss\|\leq S}} \#(\Lambda\cap\ss^\perp\cap \ball\0 Q).
\end{equation}
Next, fix $\ss\in\Lambda^*_\prim$ with $\|\ss\|\leq S$, and let $\Gamma = \Lambda\cap\ss^\perp$. Then
\begin{equation}
\label{Gammaball0Q}
\begin{split}
\#(\Gamma\cap \ball\0 Q)\cdot \Covol(\Gamma)
&= \lebesgue_{\ss^\perp}\left(\bigcup_{\rr\in \Gamma\cap \ball\0 Q} \rr+\DD_\Gamma\right)
\leq \lebesgue_{\ss^\perp}\Big(B(\0,Q + \Codiam{\Gamma})\Big) = \left(Q + \Codiam{\Gamma}\right)^{\generic - 1},
\end{split}
\end{equation}
where $\DD_\Gamma$ denotes the Dirichlet domain of $\Gamma$ centered at $0$, and $\lebesgue_{\ss^\perp}$ denotes Lebesgue measure of $\ss^\perp$, normalized so that $\lebesgue_{\ss^\perp}(\ss^\perp\cap B(\0,1)) = 1$. Now by Lemma \ref{lemmacovolumedual} we have
\begin{equation}
\label{Gammas}
\Covol(\Gamma) \eucq \|\ss\|
\end{equation}
and thus by Minkowski's Second Theorem \cite[VIII.4.V]{Cassels3},
\begin{equation}
\label{CodiamGammaQ}
\begin{split}
\Codiam{\Gamma}
\approx \lambda_{d - 1}(\Gamma)
&\lessapprox \frac{\Covol(\Gamma)}{\minkowski^{\generic - 2}(\Gamma)}
\leq \frac{\Covol(\Gamma)}{\minkowski^{\generic - 2}(\Lambda)}
\eucq \frac{\|\ss\|}{\Irr^{-(\generic - 2)/(2\generic - 1)}(\Lambda)}\\
&\leq \frac{(Q/K)^{1/(\generic - 1)}}{(Q/K)^{-(\generic - 2)/(2\generic - 1)}}
\leq Q/K
\leq Q.
\end{split}
\end{equation}
Plugging in \eqref{Gammas} and \eqref{CodiamGammaQ} into \eqref{Gammaball0Q} gives
\[
\#(\Gamma\cap \ball\0 Q) \lessapprox \frac{Q^{\generic - 1}}{\|\ss\|}\cdot
\]
Plugging this into \eqref{regular1}, we get
\begin{align*}
&Q^{-\generic} \#\{\rr \in \Lambda : \Irr(\rr) \geq K, \|\rr\|\leq Q\} \noreason\\
&\lessapprox \sum_{\substack{\ss\in\Lambda^*_\prim \\ \|\ss\|\leq S}} 
\frac{1}{Q \|\ss\|}\\
&\leq \frac{\#\{\ss\in\Lambda^* : \|\ss\| \leq \Codiam{\Lambda^*}\}}{Q\minkowski(\Lambda^*)} + \sum_{\substack{\ss\in\Lambda^* \\ \Codiam{\Lambda^*} \leq \|\ss\|\leq S}} \frac{1}{Q \|\ss\|} \noreason\\
&\lessapprox \frac{\Codiam{\Lambda^*}^\generic}{Q\minkowski(\Lambda^*)} + \int_0^{S} \frac{r^{\generic - 2}}{Q}\;\dee r \by{Corollary \ref{corollaryspherical}}\\
&\lessapprox \left.\frac{\Codiam{\Lambda^*}^{2\generic - 1}}{Q} + \frac{r^{\generic - 1}}{Q}\right\vert_{r = 0}^{S} \since{$\minkowski(\Lambda^*) \gtrapprox \Codiam{\Lambda^*}^{-(\generic - 1)}$ and $\generic \geq 2$}\\
&\approx \frac{\Irr(\Lambda)}{Q} + \frac{1}{K} \leq \frac 2K\cdot \since{$\Codiam{\Lambda^*} \approx \minkowski^{-1}(\Lambda)$ and $Q\geq K\Irr(\Lambda)$}
\end{align*}
Taking the appropriate suprema and letting $K \to \infty$ completes the proof. Note that the asymptotic inequality $\minkowski(\Lambda^*) \gtrapprox \Codiam{\Lambda^*}^{-(\generic - 1)}$ follows from combining Minkowski's second theorem on successive minima with the asymptotic $\Codiam{\Lambda^*} \approx \lambda_d(\Lambda^*)$, while the asymptotic $\Codiam{\Lambda^*} \approx \minkowski^{-1}(\Lambda)$ follows from combining \eqref{casselsdual} with the same asymptotic.
\end{proof}

For each $K \geq 1$, write
\begin{equation}
\label{epsilonKdef}
\epsilon_K = \sup_{\Lambda\in\Omega_\generic} \sup_{Q \geq K\Irr(\Lambda)} \frac{\#\{\rr \in \Lambda : \Irr(\rr) \geq K, \|\rr\|\leq Q\}}{Q^\generic}\cdot
\end{equation}
Then Lemma \ref{lemmaregular} says that if $\generic \geq 2$, then $\epsilon_K \to 0$ as $K \to \infty$. The notation $\epsilon_K$ allows us to write down the following analogue of Corollary \ref{corollaryspherical} for sums over lattice vectors with large irrationality:

\begin{observation}
\label{observationepsilonK}
Fix $K\geq 1$ and $\Lambda\in\Omega_\generic$, and let $f:\Rplus\to\Rplus$ be a decreasing function which is constant on $[0,K\Irr(\Lambda)]$. Then
\[
\sum_{\substack{\rr\in \Lambda \\ \Irr(\rr) \geq K}} f(\|\rr\|) \leq \generic \epsilon_K \int_0^\infty r^{\generic - 1} f(r) \;\dee r.
\]
\end{observation}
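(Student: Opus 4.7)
The plan is to apply Abel summation (equivalently, Riemann--Stieltjes integration by parts) to the monotone counting function
\[
N(r) \df \#\{\rr\in\Lambda : \Irr(\rr) \geq K,\; \|\rr\| \leq r\}.
\]
By the very definition \eqref{epsilonKdef} of $\epsilon_K$, we have the a priori bound $N(r) \leq \epsilon_K r^\generic$ for every $r \geq K\Irr(\Lambda)$. The role of the hypothesis that $f$ is constant on $[0,K\Irr(\Lambda)]$ is precisely to ensure that the region $r < K\Irr(\Lambda)$, where no such bound on $N(r)$ is available, contributes nothing to the relevant Stieltjes integral.

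First, I would rewrite the sum as $\int_0^\infty f(r)\,\dee N(r)$ and integrate by parts to obtain $\int_0^\infty N(r)\,\dee(-f(r))$; assuming (without loss of generality) that the right-hand side of the desired inequality is finite, the boundary term $r^\generic f(r)\big|_{r=\infty}$ vanishes. Second, since $f$ is constant on $[0,K\Irr(\Lambda)]$ the positive measure $\dee(-f(r))$ is supported in $[K\Irr(\Lambda),\infty)$, and on this set the bound from \eqref{epsilonKdef} gives
\[
\int_0^\infty N(r)\,\dee(-f(r)) \;=\; \int_{K\Irr(\Lambda)}^\infty N(r)\,\dee(-f(r)) \;\leq\; \epsilon_K \int_{K\Irr(\Lambda)}^\infty r^\generic\,\dee(-f(r)).
\]

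Third, integrating by parts a second time in reverse converts this last quantity into
\[
\epsilon_K\left((K\Irr(\Lambda))^\generic f(K\Irr(\Lambda)) + \generic\int_{K\Irr(\Lambda)}^\infty r^{\generic-1} f(r)\,\dee r\right),
\]
and the identity $(K\Irr(\Lambda))^\generic f(K\Irr(\Lambda)) = \generic\int_0^{K\Irr(\Lambda)} r^{\generic-1} f(r)\,\dee r$, valid because $f$ is constant on that interval, allows the two terms to be recombined into the single integral $\generic\epsilon_K \int_0^\infty r^{\generic-1} f(r)\,\dee r$. There is no genuine analytical obstacle here; the only point deserving care is the bookkeeping that ensures the constancy hypothesis is invoked at exactly the two places above, since otherwise one would face an uncontrolled count of vectors $\rr\in\Lambda$ of small norm with large irregularity.
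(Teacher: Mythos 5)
Your proof is correct and is essentially the paper's argument in explicit form: the paper reduces to indicator functions $f(q)=\big[q\leq Q\big]$ with $Q\geq K\Irr(\Lambda)$ and invokes the definition of $\epsilon_K$, which is exactly the layer-cake decomposition your Abel summation carries out. The two integrations by parts and the use of constancy on $[0,K\Irr(\Lambda)]$ are just the bookkeeping behind that reduction, so there is nothing to add.
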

\begin{proof}
It suffices to check this for functions of the form $f(q) = \big[q \leq Q\big]$ ($Q \geq K \Irr(\Lambda)$), and for these functions the inequality follows from the definition of $\epsilon_K$.
\end{proof}

%\begin{lemma}[Regular lattice vectors are far away from being linearly dependent]
%\label{lemmaregularindep}
%Given $\rr_1,\rr_2\in\Lambda$, we have
%\begin{equation}
%\label{regularindep}
%\Covol(\Z\rr_1 + \Z\rr_2) \gtrapprox \left(\frac{\|\rr_1\|}{\Irr(\rr_1)}\right)^{(\generic - 2)/(\generic - 1)}.
%\end{equation}
%Equivalently,
%\begin{equation}
%\label{dr2Rr1}
%\dist(\rr_2,\R\rr_1) \gtrapprox \left(\Irr(\rr_1)^{\generic - 2}\|\rr_1\|\right)^{-1/(\generic - 1)}.
%\end{equation}
%\end{lemma}
%\begin{proof}
%By Minkowski's theorem, we have
%\begin{align*}
%\minkowski(\Lambda^*\cap\rr_1^\perp) &\leq \minkowski(\Lambda^*\cap\rr_1^\perp\cap\rr_2^\perp)&\lessapprox \Covol^{1/(\generic - 2)}(\Lambda^*\cap\rr_1^\perp\cap\rr_2^\perp) \noreason\\
%&\eucq \Covol^{1/(\generic - 2)}(\Z\rr_1 + \Z\rr_2). \note{Lemma \ref{lemmacovolumedual}}
%\end{align*}
%Rearranging and using \eqref{irrdef} demonstrates \eqref{regularindep}, and the asymptotic $\Covol(\Z\rr_1 + \Z\rr_2) \eucq \|\rr_1\|\dist(\rr_2,\R\rr_1)$ demonstrates \eqref{dr2Rr1}.
%\end{proof}

\subsection{Proof of Theorem \ref{theoremestimates}}
\label{subsectionestimates}
In the remainder of this section, we fix $Q_1,Q_2,\psi$ as in the start of this section.

The following lemma allows us to simplify the notation somewhat:

\begin{lemma}[Good lattice vectors satisfy $\|(\pp,\qq)\| = \|\qq\|$]
\label{lemmaqr}
Suppose that $\psi(Q_1) \leq 1/2$ and $\diamcube \leq 1/2$. Then for all $(\pp,\qq)\in \R^\Dimsum$ with $Q_1 \leq \|\qq\| \leq Q_2$ such that $\KK\cap\Delta_\psi(\pp,\qq)\neq\emptyset$, we have
\begin{equation}
\label{qr}
\|(\pp,\qq)\| = \|\qq\|.
\end{equation}
\end{lemma}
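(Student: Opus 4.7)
The plan is to simply unwind the definitions and apply the triangle inequality. Recalling \eqref{rnormdef}, one has $\|(\pp,\qq)\| = \|\pp\|_\pgreek \vee \|\qq\|_\qgreek$, so \eqref{qr} amounts to the inequality $\|\pp\|_\pgreek \leq \|\qq\|_\qgreek$. First I would pick any $A \in \KK\cap\Delta_\psi(\pp,\qq)$ (which exists by hypothesis) and bound
\[
\|\pp\|_\pgreek \leq \|A\qq - \pp\|_\pgreek + \|A\qq\|_\pgreek \leq \psi(\|\qq\|_\qgreek) + \|A\|\cdot\|\qq\|_\qgreek,
\]
using $A \in \Delta_\psi(\pp,\qq)$ (cf. \eqref{Deltapsidef}) for the first term and the definition of the operator norm on $\MM$ for the second.

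The next step is to bound each of the two summands by $\tfrac12 \|\qq\|_\qgreek$. For the first summand, observe that since $\psi/\psi_\ast$ is nonincreasing (as $\psi$ is a nice approximation function, cf. Definition \ref{definitionniceapproximation}) and $\psi_\ast$ is itself nonincreasing, the product $\psi$ is nonincreasing. Combined with $\|\qq\|_\qgreek \geq Q_1 \geq 1$, this gives $\psi(\|\qq\|_\qgreek) \leq \psi(Q_1) \leq 1/2 \leq (1/2)\|\qq\|_\qgreek$. For the second summand, $A \in \KK$ forces $\|A\| \leq \diamcube \leq 1/2$, so $\|A\qq\|_\pgreek \leq (1/2)\|\qq\|_\qgreek$. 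Adding the two bounds produces $\|\pp\|_\pgreek \leq \|\qq\|_\qgreek$, which is \eqref{qr}.

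There is no real obstacle here: the lemma is essentially a sanity check saying that once $\|\qq\|$ is large and the approximation quality is decent, the vector $\pp$ that witnesses the approximation of some $A\in\KK$ must be no larger than $\qq$ in norm, simply because $\|A\|$ is bounded by $\diamcube$. The only subtlety is making sure $\psi(\|\qq\|) \leq \psi(Q_1)$, which requires the monotonicity of $\psi$ that is packaged into the definition of ``nice approximation function'' via the nonincreasing hypothesis on $\psi/\psi_\ast$.
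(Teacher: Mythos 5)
Your proposal is correct and is essentially the paper's own argument: pick $A\in\KK\cap\Delta_\psi(\pp,\qq)$, apply the triangle inequality $\|\pp\|\leq\|A\qq-\pp\|+\|A\|\cdot\|\qq\|$, and use the monotonicity of $\psi$ together with $\|\qq\|\geq Q_1\geq 1$ and $\diamcube\leq 1/2$ to conclude $\|\pp\|\leq\|\qq\|$. The only cosmetic difference is that you bound each summand by $\tfrac12\|\qq\|$ separately, whereas the paper combines them as $(\psi(Q_1)+\diamcube)\|\qq\|\leq\|\qq\|$.
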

\begin{proof}
If $A\in \KK\cap\Delta_\psi(\pp,\qq)$, then
\[
\|\pp\| \leq \|A\qq - \pp\| + \|A\|\cdot\|\qq\| \leq \psi(\|\qq\|) + \diamcube\cdot\|\qq\|.
\]
Since $\|\qq\|\geq Q_1 \geq 1$ and $\psi$ is nonincreasing, we have
\[
\psi(\|\qq\|) \leq \psi(Q_1) \leq \psi(Q_1)\|\qq\|.
\]
Thus $\|\pp\| \leq (\psi(Q_1) + \diamcube)\|\qq\| \leq \|\qq\|$, which implies \eqref{qr}.
\end{proof}

To make things easier, in this section we make the standing assumption that the hypotheses of Lemma \ref{lemmaqr} are satisfied:
\begin{assumption}
\label{assumption12}
$\psi(Q_1) \leq 1/2$ and $\diamcube \leq 1/2$.
\end{assumption}

In fact, Assumption \ref{assumption12} may be made without loss of generality: first of all, by replacing $\|\cdot\|_\pgreek$ by $\multerror \|\cdot\|_\pgreek$ for an appropriate $\multerror > 0$, we can assume that $\diamcube \leq 1/2$; second of all, since in Theorem \ref{theoremestimates} we are trying to prove an asymptotic which includes the convergence assumption $\phi(Q_1) \to 0$, it follows that we can assume $\phi(Q_1)$ is as small as we want and in particular that $\psi(Q_1) \leq 1/2$.

For convenience of notation we introduce the function
\[
\w\psi(q) = \psi(Q_1\vee q)
\]
and we note that $\w\psi$ is still nonincreasing (although the function $\w\phi(q) = q^\qdim \w\psi^\pdim(q)$ is not nonincreasing). We also let $\w\Psi(q) = \Psi(Q_1\vee q)$.

In the following lemma, $\Lrep$ denotes a subset of $\Lambda_\prim$ (the set of primitive vectors of $\Lambda$) such that for all $\rr\in\Lambda_\prim$, we have $\#(\{\rr,-\rr\}\cap\Lrep) = 1$.

\begin{lemma}[Estimates of the measure of $W_{\psi,\Lambda}(Q_1,Q_2)$, with multiplicity]
\label{lemmaestimates1}
%For all $K\geq 1$,
%\begin{eqnarray*}
%\sum_{\substack{\rr\in\Lrep \\ Q_1 \leq \|\qq\| \leq Q_2 \\ \Irr(\rr) \leq K}}  \lebesgue_\KK\big(\Delta_\psi(\rr)\big)
%&\eqsim{\substack{K \to\infty \\ Q_1/(K\Irr(\Lambda)) \to \infty \\ \phi(Q_1)/F_\psi(Q_1,Q_2) \to 0}}&
%\sum_{\substack{\rr\in\Lrep \\ Q_1 \leq \|\qq\| \leq Q_2}}  \lebesgue_\KK\big(\Delta_\psi(\rr)\big)\\
%&\eqsim{\substack{Q_1/\cdl \to \infty \\ \phi(Q_1)/F_\psi(Q_1,Q_2) \to 0}}&
%\eta_\greeks F_\psi(Q_1,Q_2).
%\end{eqnarray*}
We have
\begin{equation}
\label{estimates11}
\sum_{\substack{\rr\in\Lrep \\ Q_1 \leq \|\qq\| \leq Q_2}}  \lebesgue_\KK\big(\Delta_\psi(\rr)\big)
\eqsim{\substack{Q_1/\Irr(\Lambda) \to \infty \\ \phi(Q_1)/F_\psi(Q_1,Q_2) \to 0}}
\eta_\greeks F_\psi(Q_1,Q_2).
\end{equation}
Moreover, for all $K\geq 1$ such that $Q_1 \geq K\Irr(\Lambda)$ we have
\begin{equation}
\label{estimates12}
\sum_{\substack{\rr\in\Lambda \\ Q_1 \leq \|\qq\| \leq Q_2 \\ \Irr(\rr) \geq K}}  \lebesgue_\KK\big(\Delta_\psi(\rr)\big)
\LessLess{\substack{K \to \infty \\ \phi(Q_1)/F_\psi(Q_1,Q_2) \to 0}}
F_\psi(Q_1,Q_2)
\end{equation}
and thus
\begin{equation}
\label{estimates13}
\sum_{\substack{\rr\in\Lrep \\ Q_1 \leq \|\qq\| \leq Q_2 \\ \Irr(\rr) \leq K}}  \lebesgue_\KK\big(\Delta_\psi(\rr)\big)
\eqsim{\substack{K \to \infty \\ \phi(Q_1)/F_\psi(Q_1,Q_2) \to 0}}
\eta_\greeks F_\psi(Q_1,Q_2).
\end{equation}
\end{lemma}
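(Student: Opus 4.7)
Plan: For \eqref{estimates11}, the strategy is a two-step reduction: Möbius inversion to remove the primitivity condition, then Observation~\ref{observationlatticecomparison} to approximate the resulting lattice sum by an integral. Since $\Lrep$ consists of one representative from each antipodal pair of primitive vectors, Möbius inversion yields
\[
\sum_{\rr\in\Lrep,\,Q_1\le\|\qq\|\le Q_2}\lebesgue_\KK(\Delta_\psi(\rr))
= \tfrac12\sum_{k=1}^\infty\mu(k)\sum_{\substack{\rr'\in\Lambda\setminus\{\0\}\\ Q_1/k\le\|\qq'\|\le Q_2/k}}\lebesgue_\KK(\Delta_{\psi_k}(\rr')),
\]
where $\psi_k(q):=\psi(kq)/k$ arises from the identity $\Delta_\psi(k\rr')=\Delta_{\psi_k}(\rr')$. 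The Möbius series is effectively finite because for $k$ large enough, $Q_2/k<\minkowski(\Lambda)$ and the inner sum is empty.

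For the inner sum, Observation~\ref{observationlatticecomparison} compares it to $\int_{Q_1/k\le\|\qq'\|\le Q_2/k}V_\pgreek\psi_k^\pdim(\|\qq'\|)\,d\qq'$, using the Fubini identity $\int_{\R^\pdim}\lebesgue_\KK(\Delta_{\psi_k}(\pp',\qq'))\,d\pp'=V_\pgreek\psi_k^\pdim(\|\qq'\|)$; then Lemma~\ref{lemmasphericalcoordinates} and the change of variable $q\mapsto kq$ transform this into $\qdim V_\pgreek V_\qgreek F_\psi(Q_1,Q_2)/k^\dimsum$. Summing the Möbius series $\sum_k\mu(k)/k^\dimsum=1/\zeta(\dimsum)$ produces the main term $\eta_\greeks F_\psi(Q_1,Q_2)$. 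The boundary errors from Observation~\ref{observationlatticecomparison} contribute at most $\cdl\cdot V_\pgreek\phi(Q_1)/Q_1$ per level $k$ divided by $k^{\dimsum-1}$; summing geometrically (with a logarithmic factor when $\dimsum=2$), and using $\cdl\lessapprox\Irr(\Lambda)$, the assumptions $Q_1/\Irr(\Lambda)\to\infty$ and $\phi(Q_1)/F_\psi(Q_1,Q_2)\to 0$ absorb this into $o(F_\psi(Q_1,Q_2))$. The Möbius tail at $k\gtrsim Q_1/\cdl$, where the lattice-to-integral comparison degrades, is bounded crudely via $\lebesgue_\KK(\Delta_{\psi_k}(\rr'))\lessapprox(\psi_k(\|\qq'\|)/\|\qq'\|)^\pdim$ and the trivial count of lattice points in a small region, giving a contribution of order $(Q_1/\cdl)^{-(\dimsum-1)}F_\psi(Q_1,Q_2)\to 0$.

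For \eqref{estimates12}, I would produce a pointwise radially decreasing upper bound on $\lebesgue_\KK(\Delta_\psi(\rr))$ and apply Observation~\ref{observationepsilonK}. The geometric bound
\[
\lebesgue_\KK(\Delta_\psi(\pp,\qq))\lessapprox (\psi(\|\qq\|)/\|\qq\|)^\pdim = \phi(\|\qq\|)/\|\qq\|^\dimsum
\]
follows from viewing $\Delta_\psi(\pp,\qq)\cap\KK$ row-by-row as an intersection of $\pdim$ independent slabs of $\pgreek$-width $\lessapprox\psi/\|\qq\|$ inside $[0,1]^\qdim$. By Lemma~\ref{lemmaqr}, the set is non-empty only when $\|\rr\|=\|\qq\|$, so the upper bound is a decreasing function of $\|\rr\|$. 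Extending it to be constant on $[0,Q_1]$ (permissible since $Q_1\geq K\Irr(\Lambda)$) and zero past $Q_2$, Observation~\ref{observationepsilonK} combined with Lemma~\ref{lemmasphericalcoordinates} yields a total bound of order $\epsilon_K(\phi(Q_1)+F_\psi(Q_1,Q_2))$, which is $\LessLess{K\to\infty,\,\phi(Q_1)/F_\psi\to 0} F_\psi(Q_1,Q_2)$ by Lemma~\ref{lemmaregular} and the hypothesis. Statement~\eqref{estimates13} follows immediately by subtracting \eqref{estimates12} from \eqref{estimates11}.

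The main obstacle will be controlling the error terms uniformly across the Möbius series in \eqref{estimates11}: both the boundary errors from Observation~\ref{observationlatticecomparison} at each level and the tail at $k\gtrsim Q_1/\cdl$ must be shown to be $o(F_\psi(Q_1,Q_2))$, and this requires the two hypotheses $Q_1/\Irr(\Lambda)\to\infty$ and $\phi(Q_1)/F_\psi(Q_1,Q_2)\to 0$ acting together. For \eqref{estimates12}, the key technical point is verifying that the slab-product bound on $\lebesgue_\KK(\Delta_\psi(\pp,\qq))$ genuinely gives a radially decreasing function of $\|\rr\|$ after applying Lemma~\ref{lemmaqr}, so that Observation~\ref{observationepsilonK} applies directly.
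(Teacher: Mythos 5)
Your architecture for \eqref{estimates12} and \eqref{estimates13} matches the paper's exactly (the slab bound $\lebesgue_\KK(\Delta_\psi(\rr))\lessapprox\Psi^\pdim(\|\rr\|)$, Lemma \ref{lemmaqr}, Observation \ref{observationepsilonK}, then subtraction), and your main-term computation for \eqref{estimates11} — M\"obius inversion via $\Delta_\psi(k\rr')=\Delta_{\psi_k}(\rr')$, $F_{\psi_k}=k^{-\dimsum}F_\psi$, and the lattice-to-integral comparison — is also the paper's. The genuine gap is in your error control for the \emph{untruncated} M\"obius series. You sum the per-level boundary error $(\cdl/Q_1)\phi(Q_1)k^{-(\dimsum-1)}$ over all $k$ with a nonempty inner sum, i.e.\ over $k\lesssim Q_2/\minkowski(\Lambda)$. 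When $\dimsum=2$ this produces a factor $\log\big(Q_2/\minkowski(\Lambda)\big)$, and $\log Q_2$ is not controlled by either hypothesis ($Q_1/\Irr(\Lambda)\to\infty$, $\phi(Q_1)/F_\psi(Q_1,Q_2)\to 0$), so the accumulated error is not visibly $o(F_\psi(Q_1,Q_2))$. Worse, for $k\gtrsim Q_1/\cdl$ the comparison of Observation \ref{observationlatticecomparison} has genuinely broken down (the inner radius $Q_1/k$ is below the codiameter), and your claimed tail bound $(Q_1/\cdl)^{-(\dimsum-1)}F_\psi(Q_1,Q_2)$ is asserted rather than derived; the "trivial count of lattice points in a small region" is governed by $\prod_i(1+r/\lambda_i(\Lambda))$, which for a lattice with one very short vector is far larger than the volume $r^{\Dimsum}$, so the crude bound does not obviously close.

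The clean repair is a device you already have in hand but did not connect to the M\"obius sum: since $\Irr(k\rr)=k\,\Irr(\rr)\geq k\delta_0$ for $\rr\in\Lambda_\prim$, the entire tail $\sum_{k\geq K}$ of the inclusion--exclusion is a sum over vectors of irregularity $\geq K\delta_0$ and is therefore $o(F_\psi(Q_1,Q_2))$ by your own \eqref{estimates12}. This is exactly what the paper does: it truncates the inclusion--exclusion at the (finitely many) squarefree integers with prime factors $<K$, so that the integral comparison need only be made uniform over a finite set of levels depending on $K$ alone, controls the tail by \eqref{estimates12}, and then lets $K\to\infty$ in a final quantifier argument. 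With that substitution your proof goes through; without it, the uniform summability of the errors is an unproved claim.
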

\ignore{
\begin{proof}[Heuristic calculation]
On one hand
\begin{align*}
\sum_{\substack{\rr\in\Lambda \\ Q_1 \leq \|\qq\| \leq Q_2}}  \lebesgue_\KK\big(\Delta_\psi(\rr)\big)
&\sim \int_{Q_1 \leq \|\qq\| \leq Q_2} \lebesgue_\KK\big(\Delta_\psi(\rr)\big) \;\dee\rr\\
&= \int_{Q_1 \leq \|\qq\| \leq Q_2} \int_\KK \lebesgue_{\R^\pdim}(\{\pp\in\R^\pdim : \|A\qq - \pp\| \leq \psi(\|\qq\|)\}) \;\dee A \;\dee\qq \noreason\\
&= \int_{Q_1 \leq \|\qq\| \leq Q_2} V_\pgreek \psi^\pdim(\|\qq\|) \;\dee\qq\\
&= V_\pgreek \qdim V_\qgreek \int_{Q_1}^{Q_2} q^{\qdim - 1} \psi^\pdim(q) \;\dee q \note{Lemma \ref{lemmasphericalcoordinates}}\\
&= \qdim \Vgreeks F_\psi(Q_1,Q_2)
\end{align*}
and on the other hand
\begin{align*}
\sum_{\substack{\rr\in\Lambda \\ Q_1 \leq \|\qq\| \leq Q_2}} \lebesgue_\KK\big(\Delta_\psi(\rr)\big)
&= 2\sum_{k = 1}^\infty \sum_{\substack{\rr\in k\Lrep \\ Q_1 \leq \|\qq\| \leq Q_2}} \lebesgue_\KK\big(\Delta_\psi(\rr)\big)\\
&\sim 2\sum_{k = 1}^\infty k^{-\Dimsum} \sum_{\substack{\rr\in \Lrep \\ Q_1 \leq \|\qq\| \leq Q_2}} \lebesgue_\KK\big(\Delta_\psi(\rr)\big)\\
&= 2\zeta(\Dimsum) \sum_{\substack{\rr\in \Lrep \\ Q_1 \leq \|\qq\| \leq Q_2}} \lebesgue_\KK\big(\Delta_\psi(\rr)\big).
\end{align*}
\end{proof}
To make this heuristic argument rigorous, we need to justify the two parts of the calculation labeled $\sim$.
}% end ignore
\begin{proof}%[Proof if $\qdim \geq 2$]
\hspace{0in}~

\textbf{Step 1:} Initial calculation. Consider the function $\eta:\R^\Dimsum\butnot(\R^\pdim\times\{\0\})\to\Rplus$ defined by the formula
\[
\eta(\pp,\qq) \df \prod_{i = 1}^\pdim \left(\bigast_{\substack{j = 1 \\ q_j \neq 0}}^\qdim f_{q_j}\right)(p_i),
\]
where
\[
f_q(p) \df \begin{cases}
\frac 1q \big[p\in [0,q]\big] & \text{if } q > 0\\
\frac 1{|q|} \big[p \in [q,0]\big] & \text{if } q < 0
\end{cases} = \frac{\dee [t\mapsto qt]_* \lebesgue_{[0,1]}}{\dee\lebesgue_\R}
\]
and $\bigast$ denotes convolution. Here $f_* \mu$ denotes the pushforward of a measure $\mu$ under a function $f$, and $\frac{\dee\mu}{\dee\nu}$ denotes the Radon--Nikodym derivative of $\mu$ with respect to $\nu$. The significance of $\eta$ is that for each $\qq\in\R^\qdim\butnot\{\0\}$, $\eta(\cdot,\qq)$ is a nonnegative Riemann integrable function which satisfies
\begin{equation}
\label{etaS}
\int_S \eta(\pp,\qq) \;\dee\pp = \lebesgue_\KK(\{A\in \KK : A\qq \in S\}) \all S \subset \R^\pdim.
\end{equation}
Indeed, we have
\begin{align*}
\eta \lebesgue_{\R^\pdim} &= \prod_{i = 1}^\pdim \left(\bigast_{\substack{j = 1 \\ q_j \neq 0}}^\qdim f_{q_j}\right)\lebesgue_\R\\
&= \prod_{i = 1}^\pdim \Big[(x_1,\ldots,x_\qdim)\mapsto \Sigma_1^\qdim x_j\Big]_* \prod_{j = 1}^\qdim \begin{cases}
f_{q_j} \lebesgue_\R & q_j \neq 0\\
\delta_0 & q_j = 0
\end{cases}\\
&= \prod_{i = 1}^\pdim \Big[(x_1,\ldots,x_\qdim)\mapsto \Sigma_1^\qdim x_j\Big]_* \prod_{j = 1}^\qdim
\big[t\mapsto q_j t\big]_* \lebesgue_{[0,1]}\\
&= \prod_{i = 1}^m \Big[(t_1,\ldots,t_\qdim)\mapsto \Sigma_1^\qdim q_j t_j\Big]_*\lebesgue_{[0,1]^\qdim}\\
&= \Big[(t_{11},\ldots,t_{\pdim\qdim})\mapsto (\Sigma_1^\qdim t_{1 j} q_j,\ldots,\Sigma_1^\qdim t_{\pdim j} q_j)\Big]_* \lebesgue_{[0,1]^{\dimprod}}\\
&= \Big[A \mapsto A\qq\Big]_* \lebesgue_\KK.
\end{align*}
Also, note that
\begin{equation}
\label{etascaling}
\eta(\multerror\rr) = \multerror^{-\pdim} \eta(\rr) \all \multerror > 0 \all \rr\in \R^\Dimsum\butnot(\R^\pdim\times\{\0\}).
\end{equation}
Now for $\rho > 0$ let
\begin{align*}
%\eta^{(\rho)}(\rr) &= \max\{\eta(\rr') : \|\rr' - \rr\| \leq \rho\},&
\tau^{(\rho)} &= \max_{\|\qq\| = 1} \int_{\R^\pdim} \eta^{(\rho)}(\pp,\qq) \;\dee \pp,&
%\eta_{(\rho)}(\rr) &= \min\{\eta(\rr') : \|\rr' - \rr\| \leq \rho\},&
\tau_{(\rho)} &= \min_{\|\qq\| = 1} \int_{\R^\pdim} \eta_{(\rho)}(\pp,\qq) \;\dee \pp,
\end{align*}
where $\eta^{(\rho)}$, $\eta_{(\rho)}$ are as in Notation \ref{notation(rho)}. Note that $\tau^{(\rho)}\searrow 1$ and $\tau_{(\rho)} \nearrow 1$ as $\rho\to 0$. Also, by \eqref{etascaling} we have
\begin{equation}
\label{tauscaling}
\begin{split}
\int_{\R^\pdim} \eta_{(\rho)}(\pp,\qq) \;\dee \pp
&\geq \tau_{(\rho/\|\qq\|)}\\
\int_{\R^\pdim} \eta^{(\rho)}(\pp,\qq) \;\dee \pp
&\leq \tau^{(\rho/\|\qq\|)}
\end{split} \all \rho > 0 \all \qq \in \R^\qdim\butnot\{\0\}.
\end{equation}
Next, for all $\rr\in\Lambda_*$, by \eqref{etaS} we have
\begin{equation}
\label{etaintegral}
\lebesgue_\KK\big(\Delta_\psi(\rr)\big)
= \lebesgue_\KK(\{A\in\KK : A\qq \in \ball{\pp}{\psi(\|\qq\|)}\})
= \int_{\ball{\pp}{\psi(\|\qq\|)}} \eta(\pp',\qq) \;\dee\pp'
\end{equation}
and thus
\begin{align*}
&\sum_{\substack{\rr\in \Lambda\\ Q_1 \leq \|\qq\| \leq Q_2}} \lebesgue_\KK\big(\Delta_\psi(\rr)\big) \\
&\leq \sum_{\substack{\rr\in \Lambda \\ Q_1 \leq \|\qq\| \leq Q_2}} \eta^{(\psi(Q_1))}(\rr) V_\pgreek \psi^\pdim(\|\qq\|) \\
&\leq V_\pgreek \int_{\R^\Dimsum} \eta^{(\psi(Q_1) + \cdl)}(\rr) \w\psi^\pdim(\|\qq\| - \cdl) \big[Q_1 - \cdl \leq \|\qq\| \leq Q_2 + \cdl\big]\;\dee\rr  \\ \by{\eqref{latticecomparison2}}\\
&\leq V_\pgreek \tau^{\left(\frac{\psi(Q_1) + \cdl}{Q_1 - \cdl}\right)} \int_{\R^\qdim} \w\psi^\pdim(\|\qq\| - \cdl) \big[Q_1 - \cdl \leq \|\qq\| \leq Q_2 + \cdl\big]\;\dee\qq  \\ \by{\eqref{tauscaling}}\\
&= V_\pgreek \tau^{\left(\frac{\psi(Q_1) + \cdl}{Q_1 - \cdl}\right)} \qdim V_\qgreek \int_{Q_1 - \cdl}^{Q_2 + \cdl} \w\psi^\pdim(q - \cdl) q^{\qdim - 1} \;\dee q \\ \note{Lemma \ref{lemmasphericalcoordinates}}\\
&\leq \qdim \Vgreeks \tau^{\left(\frac{\psi(Q_1) + \cdl}{Q_1 - \cdl}\right)} \left(\frac{Q_1 - \cdl}{Q_1 - 2\cdl}\right)^{\qdim - 1} \int_{Q_1 - \cdl}^{Q_2 + \cdl} \w\psi^\pdim(q - \cdl) (q - \cdl)^{\qdim - 1} \;\dee q \\
&\leq \qdim \Vgreeks \tau^{\left(\frac{\psi(Q_1) + \cdl}{Q_1 - \cdl}\right)} \left(\frac{Q_1 - \cdl}{Q_1 - 2\cdl}\right)^{\qdim - 1} \big[F_\psi(Q_1,Q_2) + \phi(Q_1) \big].
\end{align*}
A similar argument (using \eqref{Fpsibounds}) shows that
\begin{align*}
&\sum_{\substack{\rr\in \Lambda\\ Q_1 \leq \|\qq\| \leq Q_2}} \lebesgue_\KK\big(\Delta_\psi(\rr)\big)\\
&\geq \qdim \Vgreeks \tau_{\left(\frac{\psi(Q_1) + \cdl}{Q_1 + \cdl}\right)} \left(\frac{Q_1 + \cdl}{Q_1 + 2\cdl}\right)^{\qdim - 1} \left[F_\psi(Q_1,Q_2) - \phi(Q_1) \log\left(\frac{Q_1 + 2\cdl}{Q_1}\right)\right].
\end{align*}
Thus
\begin{equation}
\label{beforezetacorrection}
\sum_{\substack{\rr\in \Lambda\\ Q_1 \leq \|\qq\| \leq Q_2}} \lebesgue_\KK\big(\Delta_\psi(\rr)\big)
\eqsim{\substack{Q_1/\cdl \to \infty \\ \phi(Q_1)/F_\psi(Q_1,Q_2) \to 0}}
\qdim \Vgreeks F_\psi(Q_1,Q_2).
\end{equation}

To get \eqref{estimates11}, we need to change \eqref{beforezetacorrection} to a formula about primitive vectors. Before doing this, we first demonstrate \eqref{estimates12}.

\textbf{Step 2: Ignoring irregular lattice vectors.}
Fix $\rr\in\R^\Dimsum$. Since $\eta(\pp',\qq) \lessapprox \|\qq\|^{-\pdim}$ for all $\pp'\in\R^\pdim$, \eqref{etaintegral} implies that
\begin{equation}
\label{crudebound}
\lebesgue_\KK\big(\Delta_\psi(\rr)\big) \lessapprox \|\rr\|^{-\pdim} \psi^\pdim(\|\rr\|),
\end{equation}
where we have used Lemma \ref{lemmaqr} to get $\|\qq\| = \|\rr\|$ in the case where the left-hand side is nonzero. Now fix $K \geq 1$ such that $Q_1 \geq K \Irr(\Lambda)$, and let $\epsilon_K$ be as in \eqref{epsilonKdef}. Then
\begin{align*}
\sum_{\substack{\rr\in\Lambda \\ Q_1 \leq \|\rr\| \leq Q_2 \\ \Irr(\rr) \geq K}} \lebesgue_\KK\big(\Delta_\psi(\rr)\big)
&\lessapprox \sum_{\substack{\rr\in\Lambda \\ Q_1 \leq \|\rr\| \leq Q_2 \\ \Irr(\rr) \geq K}} \Psi^\pdim(\|\rr\|) \by{\eqref{crudebound}}\\
&\leq \sum_{\substack{\rr\in\Lambda \\ \|\rr\| \leq Q_2 \\ \Irr(\rr) \geq K}} \w\Psi^\pdim(\|\rr\|)\\
&\leq \epsilon_K \Dimsum \int_0^{Q_2} q^{\generic - 1} \w\Psi^\pdim(q) \;\dee q \note{Observation \ref{observationepsilonK}}\\
&= \epsilon_K [\Dimsum F_\psi(Q_1,Q_2) + \phi(Q_1)]\\
&\hspace{-.47 in}\LessLess{\substack{K \to \infty \\ \phi(Q_1)/F_\psi(Q_1,Q_2) \to 0}} F_\psi(Q_1,Q_2),
\end{align*}
which completes the proof of \eqref{estimates12}.

\textbf{Step 3: Accounting for repeated entries.}
Fix $k\in\N$ such that $k \leq Q_1$. The argument of Step 1 goes through just as well for the function $\psi_k:[Q_1/k,Q_2/k]\to\Rplus$ defined by the formula
\[
\psi_k(q) = \psi(k q)/k,
\]
which satisfies
\begin{align*}
\Delta_{\psi'}(\rr) &= \Delta_\psi(k \rr) \all \rr\in\Lambda,&
\phi_k(Q_1/k) &= k^{-\Dimsum} \phi(Q_1),\\
F_{\psi_k}(Q_1/k,Q_2/k) &= k^{-\Dimsum} F_\psi(Q_1,Q_2).\noreason
\end{align*}
Thus, replacing $\psi$ by $\psi_k$ in \eqref{beforezetacorrection} gives
\[
\sum_{\substack{\rr\in k\Lambda\\ Q_1 \leq \|\qq\| \leq Q_2}} \lebesgue_\KK\big(\Delta_\psi(\rr)\big)
\eqsim{\substack{Q_1/(k\cdl) \to \infty \\ \phi(Q_1)/F_\psi(Q_1,Q_2) \to 0}}
\qdim \Vgreeks k^{-\Dimsum} F_\psi(Q_1,Q_2).
\]
Now let
\begin{align*}
A_k &\df \frac{1}{\qdim \Vgreeks F_\psi(Q_1,Q_2)} \sum_{\substack{\rr\in k \Lambda_\prim \\ Q_1 \leq \|\qq\| \leq Q_2}} \lebesgue_\KK\big(\Delta_\psi(\rr)\big),\\
B_k &\df \frac{1}{\qdim \Vgreeks F_\psi(Q_1,Q_2)} \sum_{\substack{\rr\in k \Lambda \\ Q_1 \leq \|\qq\| \leq Q_2}} \lebesgue_\KK\big(\Delta_\psi(\rr)\big) = \sum_{\ell\in\N} A_{k\ell},
\end{align*}
and note that
\begin{equation}
\label{beforezetacorrection2}
B_k \Plussim{\substack{Q_1/\cdl \to \infty \; (k) \\ \phi(Q_1)/F_\psi(Q_1,Q_2) \to 0}} k^{-\Dimsum}.
\end{equation}
(To switch from a multiplicative asymptotic to an additive one, we have used the fact that $k^{-\Dimsum} \leq 1$.) Recall that for all $\rr\in\Lambda$, $\Irr(\rr)$ is bounded below by a constant depending only on $\constants$, say $\Irr(\rr) \geq \delta_0 \all \rr\in\Lambda$. Now fix $K\in\N$ such that $Q_1 \geq (K/\delta_0) \Irr(\Lambda)$. Since $\Irr(k\rr) = k\Irr(\rr) \geq k \delta_0$ for all $\rr\in\Lambda_\prim$ and $k\in\N$, by \eqref{estimates12} we have
\begin{equation}
\label{totalAkbound}
\sum_{|k|\geq K} A_k \lessapprox \frac{1}{F_\psi(Q_1,Q_2)} \sum_{\substack{\rr\in\Lambda \\ Q_1 \leq \|\qq\| \leq Q_2 \\ \Irr(\rr) \geq K/\delta_0}} \lebesgue_\KK\big(\Delta_\psi(\rr)\big) \tendsto{\substack{K\to\infty \\ \phi(Q_1)/F_\psi(Q_1,Q_2) \to 0}} 0.
\end{equation}
Now let $P$ be the set of all primes strictly less than $K$, and let $S$ be the set of all square-free integers whose prime factors are all in $P$ (or equivalently, the set of products of distinct elements of $P$). Let $\mu$ denote the M\"obius function.\Footnote{For convenience, we recall that this function is given by the formula $\mu(p_1 \cdots p_k) = (-1)^k$ if $p_1,\ldots,p_k$ are distinct primes, and $\mu(n) = 0$ if $n$ is not square-free.} Then
\begin{eqnarray*}
A_1 &\Plussim{\substack{K\to \infty \\ \phi(Q_1)/F_\psi(Q_1,Q_2) \to 0}}& \sum_{\substack{k\in\N \\ k\notin \ell\N \all \ell\in P}} A_k \;\;\;\;\;\;\;\;\;\;\;\; \text{(by \eqref{totalAkbound})}\\
&=& \sum_{k\in S} \mu(k) B_k\\
&\Plussim{\substack{Q_1/\cdl \to \infty \; (K) \\ \phi(Q_1)/F_\psi(Q_1,Q_2) \to 0}}& \sum_{k\in S} \mu(k) k^{-\Dimsum} \;\;\;\;\;\;\;\;\;\;\;\; \text{(by \eqref{beforezetacorrection2})}\\
&&\text{(since $S$ is finite and depends only on $K$)}\\
&=& \prod_{k\in P}\frac{1}{1 - k^{-\Dimsum}}\\
&\eqsim{K\to\infty}& \frac{1}{\zeta(\Dimsum)}\cdot
\end{eqnarray*}
Standard quantifier logic then gives
\[
A_1 \eqsim{\substack{Q_1/\Irr(\Lambda)\to\infty \\ \phi(Q_1)/F_\psi(Q_1,Q_2) \to 0}} \frac{1}{\zeta(\Dimsum)}
\]
and rearranging gives \eqref{estimates11}. Finally, combining \eqref{estimates11} and \eqref{estimates12} and using the assumption $Q_1 \geq K\Irr(\Lambda)$ to get $Q_1/\Irr(\Lambda) \tendsto{K\to\infty} \infty$ gives \eqref{estimates13}.
%Thus, we have
%\begin{equation}
%\label{estimates2}
%\sum_{\substack{\rr\in\Lambda \\ Q_1 \leq \|\qq\| \leq Q_2 \\ \Irr(\rr) \leq K}} \eta^{(\psi(Q_1))}(\rr) \psi^\pdim(\|\qq\|) \underbrace{\sim_{[\Irr(\psi)]}}_{\substack{Q_1/(K\Irr(\Lambda))\geq 1 \\ Q_1/\cdl \to \infty \\ K\to\infty}} \sum_{\substack{\rr\in \Lambda \\ Q_1 \leq \|\qq\| \leq Q_2 \\ \Irr(\rr) \leq K}} \eta^{(\psi(Q_1))}(\rr) \psi^\pdim(\|\qq\|) \underbrace{\sim_{[\Irr(\psi)]}}_{\substack{Q_1/(K\Irr(\Lambda))\geq 1 \\ Q_1/\cdl \to \infty \\ K\to\infty}} F_\psi(Q_1,Q_2).
%\end{equation}
\end{proof}

\begin{lemma}[Quasi-independence on average]
\label{lemmaestimates2}
Suppose that $Q_1 \geq \cdl$. Then for all $\rr = (\pp,\qq)\in\Lambda_\prim$ such that $Q_1 \leq \|\qq\| \leq Q_2$, we have
\begin{equation}
\label{quasiindep}
\sum_{\substack{\rr'\in\Lambda\butnot\Z\rr \\ \|\qq\| \leq \|\qq'\| \leq Q_2}}  \lebesgue_\KK\big(\Delta_\psi(\rr)\cap\Delta_\psi(\rr')\big) \lessapprox_{\Irr(\rr)} \Psi^\pdim(\|\rr\|) \wbar F_\psi(Q_1,Q_2),
\end{equation}
where
\[
\wbar F_\psi(Q_1,Q_2) \df F_\psi(Q_1,Q_2) + \phi(Q_1).
\]
\end{lemma}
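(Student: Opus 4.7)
The strategy is to sum first over $\pp'$ for fixed $\qq'$, then over $\qq'$, with a key covolume cancellation in between. Let $\Lambda_0:=\Lambda\cap(\R^\pdim\times\{\0\})$, and let $\Lambda_\qq\subset\R^\qdim$ be the image of $\Lambda$ under $(\pp',\qq')\mapsto\qq'$, so that $\Covol(\Lambda_0)\cdot\Covol(\Lambda_\qq)=\Covol(\Lambda)=1$. The bound on the inner $\pp'$-sum will contain a factor $1/\Covol(\Lambda_0)$, and applying Observation~\ref{observationlatticecomparison} to the outer $\qq'$-sum will introduce a factor $1/\Covol(\Lambda_\qq)$; these factors multiply to $1$, leaving a pure integral that evaluates via Lemma~\ref{lemmasphericalcoordinates} and Corollary~\ref{corollaryspherical} to $\wbar F_\psi(Q_1,Q_2)$. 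The irregularity $\Irr(\rr)$ enters through the codiameters of $\Lambda_0$ and $\Lambda_\qq$, which control the errors in both lattice-to-integral passages.

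For the inner sum, I would rewrite
\[
\sum_{\pp':(\pp',\qq')\in\Lambda}\lebesgue_\KK\big(\Delta_\psi(\rr)\cap\Delta_\psi(\pp',\qq')\big)=\int_\KK [A\in\Delta_\psi(\rr)]\cdot\#\{\pp'\in\Lambda_{\qq'}:A\qq'\in\ball{\pp'}{\psi(\|\qq'\|)}\}\,dA,
\]
where $\Lambda_{\qq'}:=\{\pp':(\pp',\qq')\in\Lambda\}$ is a coset of $\Lambda_0$ in $\R^\pdim$ (empty if no such $\pp'$ exists). For each $A$, the $\pp'$-count is a standard lattice-in-ball count yielding $\approx V_\pgreek\psi^\pdim(\|\qq'\|)/\Covol(\Lambda_0)$ up to a codiameter correction. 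Combining with the estimate $\lebesgue_\KK(\Delta_\psi(\rr))\approx V_\pgreek\Psi^\pdim(\|\qq\|)$ from Step~1 of the proof of Lemma~\ref{lemmaestimates1} gives
\[
\sum_{\pp'}\lebesgue_\KK\big(\Delta_\psi(\rr)\cap\Delta_\psi(\pp',\qq')\big)\ \lessapprox_{\Irr(\rr)}\ \frac{\Psi^\pdim(\|\qq\|)\,\psi^\pdim(\|\qq'\|)}{\Covol(\Lambda_0)}\cdot
\]

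Summing over $\qq'\in\Lambda_\qq\butnot\R\qq$ with $\|\qq\|\leq\|\qq'\|\leq Q_2$ via Observation~\ref{observationlatticecomparison}, the factors $\Covol(\Lambda_0)\Covol(\Lambda_\qq)=1$ cancel and Corollary~\ref{corollaryspherical} produces
\[
\int_{\|\qq\|\leq\|\qq'\|\leq Q_2}\psi^\pdim(\|\qq'\|)\,d\qq'\ \lessapprox\ F_\psi(\|\qq\|,Q_2)+\phi(\|\qq\|)\ \leq\ \wbar F_\psi(Q_1,Q_2),
\]
where the additive $\phi(\|\qq\|)$ is the boundary term of Corollary~\ref{corollaryspherical}. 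Invoking Lemma~\ref{lemmaqr} to replace $\|\qq\|$ by $\|\rr\|$ on the support of the intersection then yields the claimed estimate. The parallel case $\qq'\in\R\qq$ with $\rr'\notin\Z\rr$ must be handled separately: since $\rr\in\Lrep$, any such $\rr'=(\pp',\alpha\qq)$ satisfies a quantitative repulsion $\|\pp'-\alpha\pp\|\gtrapprox 1/|\alpha|$ coming from the lattice structure along $\R\rr$, which forces either the intersection $\Delta_\psi(\rr)\cap\Delta_\psi(\rr')$ to be empty or $|\alpha|$ to be so large ($|\alpha|\gtrsim 1/\psi(\|\qq\|)$) that the total contribution from these $\rr'$ is absorbed into the $\phi(Q_1)$ term.

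\textbf{Main obstacle.} The main technical difficulty is quantitatively controlling the codiameters of $\Lambda_0$ and $\Lambda_\qq$ by $\Irr(\rr)$, which is needed to apply Observation~\ref{observationlatticecomparison} with the right error terms. These codiameters can be related (via Lemma~\ref{lemmacovolumedual}) to Minkowski minima of sublattices of $\Lambda^*\cap\rr^\perp$; combining with duality \eqref{casselsdual} and Minkowski's second theorem on successive minima gives bounds in terms of $\lambda_1(\Lambda^*\cap\rr^\perp)$, which by the definition of $\Irr(\rr)$ translates to the desired $\Irr(\rr)$-dependence. A secondary subtlety is that the inner-sum bound degrades as $\qq'$ approaches the line $\R\qq$, and the combination of this degeneracy with the boundary term from Corollary~\ref{corollaryspherical} is precisely what produces the additive $\phi(Q_1)$ correction in $\wbar F_\psi$.
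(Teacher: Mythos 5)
Your proposal takes a genuinely different route from the paper (which builds a cylindrical fundamental domain around $\R\rr$ and bounds each intersection by a wedge-product/transversality estimate), but as written it has two gaps, one structural and one at the heart of the matter.

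First, the decomposition into fibers over $\qq'$ presupposes that $\Lambda_0 = \Lambda\cap(\R^\pdim\times\{\0\})$ is a full-rank lattice in $\R^\pdim$ and that the projection $\Lambda_\qq$ of $\Lambda$ to $\R^\qdim$ is a lattice. This holds for $\Lambda_*=\Z^\Dimsum$ but fails for a general unimodular $\Lambda\in\Omega_\Dimsum$: the intersection can be trivial, in which case $\Lambda_\qq$ is dense and neither $\Covol(\Lambda_0)$ nor the outer application of Observation \ref{observationlatticecomparison} makes sense. The lemma must hold for arbitrary $\Lambda$ (it is applied to $\Lambda_\omega = g_\omega\Lambda_*$ in Corollary \ref{corollaryestimates}), so the coordinate splitting $\R^\pdim\times\R^\qdim$ is the wrong frame; any fibering has to be adapted to $\rr$ itself.

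Second, and more seriously, the inner-sum bound $\lessapprox_{\Irr(\rr)}\Psi^\pdim(\|\qq\|)\psi^\pdim(\|\qq'\|)/\Covol(\Lambda_0)$ cannot hold uniformly in $\qq'$. Conditioned on $A\in\Delta_\psi(\rr)$, the point $A\qq'$ is constrained to a region whose size is governed by the transversality of $\qq'$ to $\qq$; when $\rr'$ is nearly parallel to $\rr$ (but $\rr'\notin\Z\rr$), one can have $\lebesgue_\KK(\Delta_\psi(\rr)\cap\Delta_\psi(\rr'))\approx\lebesgue_\KK(\Delta_\psi(\rr))$, which exceeds your claimed bound by a factor of order $\Covol(\Lambda_0)/\psi^\pdim(\|\qq'\|)$. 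The correct individual bound decays like $\big(\psi(\|\rr'\|)/\dist(\rr',\R\rr)\big)^\pdim$, and summing this over the near-parallel $\rr'$ is precisely the hard part: it is where the $\Irr(\rr)$-dependence enters (to control how densely $\Lambda$ accumulates near the line $\R\rr$) and where the cases $\qdim=1$ and $\qdim\geq 2$ must be treated separately (the integral $\int x^{\qdim-2}\,\dee x$ converges at $x=0$ only for $\qdim\geq 2$; for $\qdim=1$ one instead uses that nonempty intersection forces $\dist(\rr',\R\rr)\lesssim\|\qq'\|\Psi(\|\qq\|)$). Your plan relegates this to a ``secondary subtlety'' absorbed into the $\phi(Q_1)$ term, but that term comes only from the boundary of the radial count near $\|\qq'\|\approx Q_1$; it cannot absorb the near-parallel contribution. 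Relatedly, the repulsion $\|\pp'-\alpha\pp\|\gtrapprox 1/|\alpha|$ you invoke for $\qq'\in\R\qq$ is not justified for a general lattice and does not follow from $\rr$ being primitive alone. Until the transversality decay is built into the estimate, the argument does not close.
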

\noindent {\it Proof.}
We can assume that $\KK\cap\Delta_\psi(\rr)\neq\emptyset$, as otherwise \eqref{quasiindep} holds trivially. Thus by Lemma \ref{lemmaqr}, we get $\|\qq\| = \|\rr\|$. Also, since $F_\psi(\|\qq\|,Q_2) + \phi(\|\qq\|) \leq F_\psi(Q_1,Q_2) + \phi(Q_1)$, we may without loss of generality assume that $Q_1 = \|\qq\|$.

Let $\pi:\R^\Dimsum\to \rr^\perp$ be the orthogonal projection map, so that $\pi(\rr) = 0$. Then $\pi(\Lambda)$ is a lattice in $\rr^\perp$. Let $\DD_1\subset\rr^\perp$ be the Dirichlet fundamental domain for $\pi(\Lambda)$ centered at $\0$ (cf. \eqref{dirichletdomain}). Let $\DD_2 = [-1/2,1/2]\rr$, and let $\DD = \DD_1 + \DD_2$. Then $\DD$ is a fundamental domain for $\Lambda$ (though not necessarily a Dirichlet domain). Note that
\begin{equation}
\label{diamDbound}
\cdd \df \max_{\xx\in\DD}\|\xx\| \leq \Codiam{\pi(\Lambda)} + \|\rr\| \leq \Codiam{\Lambda} + \|\rr\| \leq \|\qq\| + \|\rr\| = 2 Q_1.
\end{equation}

\begin{claim}
\label{claimfi}
For all $\rr'\in \Lambda\butnot\Z\rr$ and $\rr'' \in \rr' + \DD$, we have
\[
\dist(\rr',\R\rr) \gtrapprox_{\Irr(\rr)} \dist(\rr'',\R\rr).
\]
\end{claim}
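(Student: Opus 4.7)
I would decompose $\rr'' = \rr' + \dd_1 + \dd_2$ with $\dd_1 \in \DD_1 \subset \rr^\perp$ and $\dd_2 \in \DD_2 = [-1/2,1/2]\rr$. Since $\dd_2 \in \R\rr$, the triangle inequality gives
\[
\dist(\rr'',\R\rr) = \dist(\rr' + \dd_1, \R\rr) \leq \dist(\rr',\R\rr) + \|\dd_1\|,
\]
so the claim reduces to showing $\|\dd_1\| \lessapprox_{\Irr(\rr)} \dist(\rr',\R\rr)$. Since $\DD_1$ is the Dirichlet fundamental domain for $\pi(\Lambda)$ centered at $\0$, \eqref{dirichletdomain} gives $\|\dd_1\| \leq \Codiam{\pi(\Lambda)}$. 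On the other hand, $\rr$ is primitive and $\rr'\notin \Z\rr$, so $\pi(\rr') \neq \0$; and because $\xx \mapsto \dist(\xx,\R\rr)$ descends to a norm on $\R^\generic/\R\rr \cong \rr^\perp$ which is equivalent to the restriction of $\|\cdot\|$ (with constants depending only on $\generic$ and $\|\cdot\|$), we obtain
\[
\dist(\rr',\R\rr) \gtrapprox \|\pi(\rr')\| \geq \minkowski(\pi(\Lambda)).
\]
It therefore suffices to prove the uniform bound $\Codiam{\pi(\Lambda)}/\minkowski(\pi(\Lambda)) \lessapprox \Irr(\rr)$.

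The remaining step is purely lattice-theoretic and is where the main work lies. The key identifications are: (a) a direct pairing argument shows $(\pi(\Lambda))^* = \Lambda^*\cap\rr^\perp$ as dual lattices inside $\rr^\perp$; and (b) by Lemma \ref{lemmacovolumedual} together with $\Covol(\Lambda\cap\R\rr)=\|\rr\|$ (from primitivity of $\rr$), $\Covol(\pi(\Lambda)) \approx 1/\|\rr\|$. The duality relation \eqref{casselsdual} for successive minima, applied to the rank $\generic-1$ lattice $\pi(\Lambda)$, yields $\Codiam{\pi(\Lambda)} \approx \lambda_{\generic-1}(\pi(\Lambda)) \approx 1/\minkowski((\pi(\Lambda))^*) = 1/\minkowski(\Lambda^*\cap\rr^\perp)$. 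Minkowski's second theorem, combined with the trivial bound $\lambda_i(\pi(\Lambda)) \leq \lambda_{\generic-1}(\pi(\Lambda))$ for $i = 2,\ldots,\generic-1$, gives
\[
\minkowski(\pi(\Lambda)) \gtrapprox \frac{\Covol(\pi(\Lambda))}{\Codiam{\pi(\Lambda)}^{\generic-2}} \approx \frac{\minkowski(\Lambda^*\cap\rr^\perp)^{\generic-2}}{\|\rr\|}\cdot
\]
Dividing, $\Codiam{\pi(\Lambda)}/\minkowski(\pi(\Lambda)) \lessapprox \|\rr\|/\minkowski(\Lambda^*\cap\rr^\perp)^{\generic-1} = \Irr(\rr)$, and combining with the reduction above completes the proof.

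The main obstacle is bookkeeping of exponents: to recover precisely the definition \eqref{irrdef} of $\Irr(\rr)$, one needs the $(\generic-1)$-st power of $\minkowski(\Lambda^*\cap\rr^\perp)$ in the denominator, which forces the use of $\Codiam{\pi(\Lambda)}^{\generic-2}$ (rather than $\Codiam{\pi(\Lambda)}^{\generic-1}$) in the lower bound on $\minkowski(\pi(\Lambda))$. This uses crucially that $\pi(\Lambda)$ has rank $\generic-1$ rather than $\generic$, which is what makes the irregularity of $\rr$ — a genuine ``one-codimensional'' invariant — the correct quantity to control the geometry of the fundamental domain $\DD$ transverse to $\R\rr$.
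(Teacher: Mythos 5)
Your proof is correct and follows essentially the same route as the paper's: reduce to bounding $\Codiam{\pi(\Lambda)}/\minkowski(\pi(\Lambda))$ via the triangle inequality and $\|\pi(\rr')\| \geq \minkowski(\pi(\Lambda))$, then control that ratio by $\Irr(\rr)$ using the duality $\pi(\Lambda)^* = \Lambda^*\cap\rr^\perp$, the transference relation \eqref{casselsdual}, and Minkowski's second theorem. The only cosmetic difference is that you apply Minkowski's second theorem to $\pi(\Lambda)$ itself while the paper applies it to the dual lattice $\Lambda^*\cap\rr^\perp$; these are equivalent given the duality both arguments already invoke, and the exponent bookkeeping works out identically.
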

\begin{subproof}
Fix $\rr'\in \Lambda\butnot\Z\rr$ and $\rr'' \in \rr' + \DD$. Then $\pi(\rr') \in \pi(\Lambda)\butnot\{\0\}$ and $\pi(\rr'') \in \rr' + \DD_1$. In particular, we have
\[
\dist(\rr',\R\rr) = \|\pi(\rr')\| \geq \minkowski(\pi(\Lambda))
\]
and thus
\[
\dist(\rr'',\R\rr) \leq \dist(\rr',\R\rr) + \Codiam{\pi(\Lambda)} \leq \left(1 + \frac{\Codiam{\pi(\Lambda)}}{\minkowski(\pi(\Lambda))}\right)\dist(\rr',\R\rr).
\]
But since $\pi(\Lambda)^* = \Lambda^*\cap\rr^\perp$, we have $\Codiam{\pi(\Lambda)} \approx \minkowski^{-1}(\Lambda^*\cap\rr^\perp)$ and $\minkowski(\pi(\Lambda)) \approx \Codiam{\Lambda^*\cap\rr^\perp}^{-1} \approx \lambda_{d - 1}^{-1}(\Lambda^*\cap\rr^\perp)$. Thus by Minkowski's Second Theorem, the ratio $\Codiam{\pi(\Lambda)}/\minkowski(\pi(\Lambda))$ is bounded from above by a constant depending only on $\Irr(\rr)$. This completes the proof.
\end{subproof}

Denote the implied constant by $k_1$ (so that $k_1$ depends on $\Irr(\rr)$). We now split the proof into two cases according to whether or not $\qdim = 1$:

\begin{proof}[Completion of the proof if $\qdim = 1$]
Without loss of generality suppose that $q > 0$ (where $\rr = (\pp,q)$) and $\|\cdot\|_\qgreek = |\cdot|$, so that $\|q\| = q = Q_1$. Now fix $\rr' = (\pp',q')\in\Lambda\butnot\Z\rr$ such that $\Delta_\psi(\rr)\cap\Delta_\psi(\rr') \neq \emptyset$ and $Q_1 \leq q' \leq Q_2$. Since
\begin{equation}
\label{Deltapsir1dim}
\Delta_\psi(\rr) = B(\pp/q,\Psi(q)), \;\;\; \Delta_\psi(\rr') = B(\pp'/q',\Psi(q')),
\end{equation}
we get
\[
\left\|\frac{\pp'}{q'} - \frac{\pp}{q} \right\| \leq \Psi(q) + \Psi(q') \leq 2\Psi(q)
\]
and thus
\[
\dist(\rr',\R\rr) \leq \dist\big((\pp',q'),(q'\pp/q,q')\big) \leq 2 q' \Psi(q).
\]
Thus by \eqref{crudebound},
\begin{align*}
&\sum_{\substack{\rr'\in\Lambda\butnot\Z\rr \\ Q_1 \leq q' \leq Q_2}}  \lebesgue_\KK\big(\Delta_\psi(\rr)\cap\Delta_\psi(\rr')\big)
\lessapprox \sum_{\substack{\rr'\in\Lambda\butnot\Z\rr \\ Q_1 \leq q' \leq Q_2 \\ \dist(\rr',\R\rr) \leq 2q' \Psi(q)}} \Psi^\pdim(q') \noreason\\
&\leq \int_{-(Q_2 + \cdd)}^{Q_2 + \cdd} \int_{\R^\pdim} \w\Psi^\pdim(|q'| - \cdd) \Big[\dist(\rr',\R\rr) \leq 2 k_1 |q'| \Psi(q)\Big] \;\dee\pp' \;\dee q' \noreason\\
&& \hspace{0 in} \text{(by Observation \ref{observationlatticecomparison} and Claim \ref{claimfi})}\\
&\leq \int_{-(Q_2 + \cdd)}^{Q_2 + \cdd} \w\Psi^\pdim(|q'| - \cdd) \lebesgue_{\R^\pdim}\big(B(q'\pp/q,2 k_1 k_2 |q'| \Psi(q) )\big) \;\dee q' \noreason\\
&& \hspace{-3 in} \text{(since $\dist(\rr',\R\rr) \leq k_2 \|\pp' - q'\pp/q\|$ for some constant $k_2$)}\\
&\approx_{\Irr(\rr)} \int_0^{Q_2 + \cdd} \w\Psi^\pdim(q' - \cdd) \big(q' \Psi(q)\big)^\pdim \;\dee q' \noreason\\
&= \Psi^\pdim(q) \int_{-\cdd}^{Q_2} \w\Psi^\pdim(q') (q' + \cdd)^\pdim \;\dee q' \noreason\\
&\lessapprox \Psi^\pdim(q) \int_{-2 Q_1}^{Q_2} \w\Psi^\pdim(q') (Q_1\vee q')^\pdim \;\;\dee q' \by{\eqref{diamDbound}}\\
&= \Psi^\pdim(q) [F_\psi(Q_1,Q_2) + 3Q_1 \psi^\pdim(Q_1)] \since{$\qdim = 1$}\\
&= \Psi^\pdim(\|\rr\|) [F_\psi(Q_1,Q_2) + 3\phi(Q_1)]. \by{Lemma \ref{lemmaqr}}
\end{align*}
This completes the proof of Lemma \ref{lemmaestimates2} in the case $\qdim = 1$.
%Let $k > 0$ denote the reciprocal of the implied constant. We have
%%Note that the right-hand side greatly exceeds $q$. We have
%\begin{align*}
%\sum_{\substack{\rr'\in\Lambda\butnot\Z\rr \\ q \leq q' \leq Q_2}}  \lebesgue_\KK\big(\Delta_\psi(\rr)\cap\Delta_\psi(\rr')\big)
%&= \sum_{\substack{\rr'\in\Lambda\butnot\Z\rr \\ q \leq q' \leq Q_2 \\ \Delta_\psi(\rr)\cap\Delta_\psi(\rr')\neq\smallemptyset}}  \lebesgue_\KK\big(B(\pp/q,\Psi(q))\cap B(\pp'/q',\Psi(q'))\big)\\
%&\lessapprox
%\sum_{\substack{\rr'\in\Lambda\butnot\Z\rr \\ q \leq q' \leq Q_2 \\ \dist(\rr',\R\rr) \leq 2 q' \Psi(q) \\ q' \geq k/(q^{1/\pdim} \Psi(q))}} \Psi^\pdim(q')\\
%&\approx
%\sum_{\substack{\rr'\in\Lambda\butnot\Z\rr \\ q \leq q' \leq Q_2 \\ \dist(\rr_0,\R\rr) \leq 2 q' \Psi(q) \\ q' \geq k/(q^{1/\pdim} \Psi(q))}} \int_{\rr_0 + \DD} \Psi^\pdim(q'\vee q)\;\dee\rr'\\
%&\lessapprox
%\int_{\substack{\rr'\in \ball\0 Q_2\butnot B(\0,k/(q^{1/\pdim}\Psi(q))) \\ \dist(\rr',\R\rr) \leq 2 q' \Psi(q)}} \Psi^\pdim(q'\vee q)\;\dee\rr'\\
%&\lessapprox
%\int_{k/(q^{1/\pdim}\Psi(q))}^{Q_2} \Psi^\pdim(q'\vee q) (q'\Psi(q))^\pdim \;\dee q'\\
%\end{align*}
\end{proof}

\begin{proof}[Completion of the proof if $\qdim\geq 2$]

\begin{sublemma}
For all $\rr' = (\pp',\qq')\in\Lambda\butnot\Z\rr$ with $\|\qq'\|\geq \|\qq\|$, we have
\begin{equation}
\label{lambdaintersection}
\lebesgue_\KK\big(\Delta_\psi(\rr)\cap\Delta_\psi(\rr')\big) \lessapprox \left(\frac{\psi(\|\rr\|) \psi(\|\rr'\|)}{\|\rr\Wedge\rr'\|}\right)^\pdim
\eucq \left(\frac{\psi(\|\rr\|)}{\|\rr\|} \frac{\psi(\|\rr'\|)}{\dist(\rr',\R\rr)}\right)^\pdim,
\end{equation}
where $\wedge$ denotes the wedge product.
\end{sublemma}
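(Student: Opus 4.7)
The plan is to exploit the independence of the $\pdim$ rows of $A$ under $\lebesgue_\KK$ to reduce the estimate to a product of per-row calculations, bound each factor by a coarea argument, and finally translate the resulting $\qq$-wedge quantity into the stated $\rr$-wedge form using the geometric witness to nonemptiness inside $\KK$.

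First I would observe that the conditions $A\in\Delta_\psi(\rr)\cap\Delta_\psi(\rr')$ decouple row-by-row into the pair $|a_i\cdot\qq-p_i|\le s$ and $|a_i\cdot\qq'-p_i'|\le t$, where $s\df\psi(\|\qq\|)$ and $t\df\psi(\|\qq'\|)$. Hence $\lebesgue_\KK(\Delta_\psi(\rr)\cap\Delta_\psi(\rr'))=\prod_{i=1}^\pdim\lebesgue(R_i)$ with $R_i\subset[0,1]^\qdim$ defined by these two linear constraints. Assuming $\qq,\qq'$ are linearly independent (the degenerate case being treated separately), I would apply the coarea formula to the linear map $T:\R^\qdim\to\R^2$, $T(a)=(a\cdot\qq,a\cdot\qq')$, whose Cauchy--Binet Jacobian equals $\|\qq\Wedge\qq'\|_2$. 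The preimage of the $2s\times 2t$ rectangle centred at $(p_i,p_i')$ is bounded by $4st/\|\qq\Wedge\qq'\|_2$ times the maximum $(\qdim-2)$-dimensional volume of a fiber $T^{-1}(x,y)\cap[0,1]^\qdim$, a quantity uniformly bounded in terms of $\qdim$ alone. Taking the product over $i$ yields the preliminary bound $\lebesgue_\KK(\Delta_\psi(\rr)\cap\Delta_\psi(\rr'))\lessapprox(st/\|\qq\Wedge\qq'\|)^\pdim$.

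Next I would translate $\|\qq\Wedge\qq'\|$ into $\|\rr\Wedge\rr'\|$. Lemma \ref{lemmaqr} gives $\|\rr\|=\|\qq\|$ and $\|\rr'\|=\|\qq'\|$ whenever the intersection is nonempty, so $\psi(\|\rr\|)=s$ and $\psi(\|\rr'\|)=t$. Fixing a witness $A_0\in\KK\cap\Delta_\psi(\rr)\cap\Delta_\psi(\rr')$, we have $\pp=A_0\qq+O(s)$ and $\pp'=A_0\qq'+O(t)$. The linear map $L:\R^\qdim\to\R^{\dimsum}$, $L(x)=(A_0x,x)$, has operator norm $\lessapprox 1$ since $\diamcube\le 1/2$, so $\|L(\qq)\Wedge L(\qq')\|\lessapprox\|\qq\Wedge\qq'\|$. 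Combined with the reverse inequality $\|\rr\Wedge\rr'\|\gtrapprox\|\qq\Wedge\qq'\|$ coming from the orthogonal projection of $\wedge^2\R^{\dimsum}$ onto its $\wedge^2\R^\qdim$ block (which sends $\rr\Wedge\rr'$ to $\qq\Wedge\qq'$), this yields $\|\rr\Wedge\rr'\|\approx\|\qq\Wedge\qq'\|$ in the regime where the stated bound is nontrivial. The second asymptotic $\|\rr\Wedge\rr'\|\eucq\|\rr\|\dist(\rr',\R\rr)$ is then the standard Euclidean identity $\|u\Wedge v\|_2=\|u\|_2\dist_2(v,\R u)$, extended to arbitrary norms on $\R^\dimsum$ and $\wedge^2\R^\dimsum$ by equivalence of norms.

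The main obstacle is the wedge-norm conversion step. Passing from $L(\qq)\Wedge L(\qq')$ to $\rr\Wedge\rr'$ introduces additive errors of order $s\|\rr'\|+t\|\rr\|$ arising from the $O(s),O(t)$ discrepancies in $\pp,\pp'$; one must check these are dominated by $\|\qq\Wedge\qq'\|$ precisely in the regime where $(st/\|\qq\Wedge\qq'\|)^\pdim\le 1$, outside of which the stated bound is vacuous since any $\lebesgue_\KK$-measure is $\le 1$. A secondary subtlety is the degenerate case $\|\qq\Wedge\qq'\|=0$ with $\rr\Wedge\rr'\neq\0$, in which $\qq'$ is a scalar multiple of $\qq$ but the offsets $\pp,\pp'$ are not correspondingly proportional: here the two slab constraints collapse to a single slab in $\R^\qdim$ together with a compatibility condition on $(p_i,p_i')$, and the separation $\|\rr\Wedge\rr'\|$ is detected through the mixed components of $\rr\Wedge\rr'$, necessitating a direct one-dimensional reduction rather than the two-dimensional coarea projection.
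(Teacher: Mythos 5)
Your route is genuinely different from the paper's: the paper pushes $\lebesgue_\KK$ forward to the Grassmannian $\GG(\qdim,\Dimsum)$ via $A\mapsto\{(A\qq_0,\qq_0):\qq_0\in\R^\qdim\}$ and estimates the measure of the relevant region by realizing a random $\qdim$-plane as $\bigcap_{i=1}^\pdim\ss_i^\perp$ for independent random $\ss_i$, which lets it work with $\rr\Wedge\rr'$ directly and never pass through $\qq\Wedge\qq'$. Your row-decoupling and coarea step are sound (after enlarging the $\|\cdot\|_\pgreek$-ball to a max-norm box, harmless for an upper bound) and do yield $\lebesgue_\KK(\Delta_\psi(\rr)\cap\Delta_\psi(\rr'))\lessapprox(st/\|\qq\Wedge\qq'\|)^\pdim$. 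The gap is in your handling of the conversion to $\|\rr\Wedge\rr'\|$: both halves of the dichotomy you propose fail. In the regime $st\leq\|\qq\Wedge\qq'\|$ the error terms are \emph{not} dominated, since you would need $s\|\qq'\|\lessapprox\|\qq\Wedge\qq'\|$ while $s\|\qq'\|$ exceeds $st$ by the factor $\|\qq'\|/\psi(\|\qq'\|)\geq 2Q_1$, so the hypothesis gives no control. And in the regime $st>\|\qq\Wedge\qq'\|$ the stated bound is \emph{not} vacuous: vacuousness would require $st\gtrapprox\|\rr\Wedge\rr'\|$, and $\|\rr\Wedge\rr'\|$ can be far larger than $\|\qq\Wedge\qq'\|$ --- take $\qq'$ nearly parallel to $\qq$ while $\pp'$ is far from the corresponding multiple of $\pp$, so that $\dist(\qq',\R\qq)$ is tiny but $\dist(\rr',\R\rr)$ is of order $\|\qq'\|$. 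In that configuration your coarea bound exceeds $1$ and says nothing (the offset information lives in where the rectangle meets the thin zonotope $T([0,1]^\qdim)$, which your ``area over Jacobian times max fiber volume'' bound discards), while the right-hand side of \eqref{lambdaintersection} is $\ll 1$.

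The step can be repaired, but by a different mechanism than the one you name. If the intersection is nonempty, pick a witness $A_0\in\KK\cap\Delta_\psi(\rr)\cap\Delta_\psi(\rr')$ and write $\qq'=\lambda\qq+\qq''$ with $\qq''\perp\qq$; then $\pp-A_0\qq$ and $\pp'-A_0\qq'$ have norms at most $s$ and $t$, so $\|\pp'-\lambda\pp\|\lessapprox|\lambda|s+t+\|\qq''\|$, and hence $\|\rr\Wedge\rr'\|\eucq\|\rr\|\dist(\rr',\R\rr)\lessapprox\|\qq\Wedge\qq'\|+s\|\qq'\|+t\|\qq\|$. Now split according to which term dominates. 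If $\|\rr\Wedge\rr'\|\lessapprox\|\qq\Wedge\qq'\|$, your coarea bound gives \eqref{lambdaintersection}. If $\|\rr\Wedge\rr'\|\lessapprox s\|\qq'\|$, then $st/\|\rr\Wedge\rr'\|\gtrapprox t/\|\qq'\|$ and \eqref{lambdaintersection} follows from the single-slab bound \eqref{crudebound} applied to $\rr'$; symmetrically, if $\|\rr\Wedge\rr'\|\lessapprox t\|\qq\|$, apply \eqref{crudebound} to $\rr$. The exactly degenerate case $\qq\Wedge\qq'=\0$ is then subsumed by the last two cases, so no separate one-dimensional reduction is needed. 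The point is that the escape hatch in the error-dominated regimes is the trivial one-slab estimate, not the bound $\lebesgue_\KK\leq 1$.
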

\begin{subproof}
Let $\GG(\qdim,\Dimsum)$ denote the Grassmanian space consisting of all subspaces of $\R^\Dimsum$ of dimension $\qdim$. Define the map $\iota:\KK\to \GG(\qdim,\Dimsum)$ via the formula
\[
\iota(A) = \{(A\qq,\qq) : \qq\in \R^\qdim\},
\]
and note that for all $\rr = (\pp,\qq)\in\R^\Dimsum$ and $A\in\KK$, we have $\|A\qq - \pp\| \approx \dist(\rr,\iota(A))$. Also note that $\iota_*[\lebesgue_\KK] \lessapprox \haarg$, where $\haarg$ denotes the natural measure on $\GG(\qdim,\Dimsum)$ (i.e. the unique measure on $\GG(\qdim,\Dimsum)$ invarant under the action of $\SO(\Dimsum)$).\Footnote{Further details: The Radon--Nikodym derivative $\frac{\dee \iota_*[\lebesgue_\KK]}{\dee \haarg}$ can be computed as the quotient of the two volume forms $\iota_*[\omega_\KK]$ and $\omega_\GG$ corresponding to $\iota_*[\lebesgue_\KK]$ and $\haarg$. Both of these are positive continuous volume forms, so their quotient is a positive continuous function on the compact set $\iota(\KK)$. If $C$ is an upper bound for this function, then we have $\iota_*[\lebesgue_\KK] \leq C \haarg$.} So
\[
\lebesgue_\KK\big(\Delta_\psi(\rr)\cap\Delta_\psi(\rr')\big) \lessapprox \haarg\left(\left\{V\in \GG(\qdim,\Dimsum) : \begin{array}{c}
\dist(\rr,V) \leq k_3 \psi(\|\rr\|)\\
\dist(\rr',V) \leq k_3 \psi(\|\rr'\|)
\end{array}\right\}\right)
\]
for some constant $k_3 > 0$. Here we have used Lemma \ref{lemmaqr} to replace $\psi(\|\qq\|)$ and $\psi(\|\qq'\|)$ by $\psi(\|\rr\|)$ and $\psi(\|\rr'\|)$, respectively, in the case where the left-hand side is nonzero.

Now, one way of selecting a point $V\in \GG(\qdim,\Dimsum)$ randomly with respect to $\haarg$ is to independently select $\pdim$ vectors $\ss_1,\ldots,\ss_\pdim\in \S^{\Dimsum - 1}$ randomly with respect to $\lambda_\S$ (normalized Lebesgue measure on $\S$) and then to let $V = \bigcap_{i = 1}^\pdim \ss_i^\perp$. (This can be verified by checking that the measure resulting from this method is invariant under the action of $\SO(\Dimsum)$.) In this case, $\dist(\rr,V) \geq |\rr\cdot \ss_i|$ for all $i$. It follows that the probability that $\dist(\rr,V) \leq k_3 \psi(\|\rr\|)$ and $\dist(\rr',V) \leq k_3 \psi(\rr')$ is less than the probability that for all $i$, we have $\|\rr\cdot \ss_i\| \leq k_3 \psi(\|\rr\|)$ and $\|\rr'\cdot\ss_i\| \leq k_3 \psi(\|\rr'\|)$. Translating this into symbols, we have
\begin{align*}
\haarg\left(\left\{V\in \GG(\qdim,\Dimsum) : \begin{array}{c}
\dist(\rr,V) \leq k_3 \psi(\|\rr\|)\\
\dist(\rr',V) \leq k_3 \psi(\|\rr'\|)
\end{array}\right\}\right)
&\leq \big(\lebesgue_\S(R)\big)^\pdim,
%&= \big(\lebesgue_{\S^{\Dimsum - 1}}(\thickvar{\rr^\perp}{k \psi(\|\rr\|)}\cap \thickvar{(\rr')^\perp}{k \psi(\|\rr'\|)\}})\big)^\pdim,
\end{align*}
where
\[
R = \left\{\ss \in \S^{\Dimsum - 1} :
|\rr\cdot\ss| \leq k_3 \psi(\|\rr\|),\;\;
|\rr'\cdot\ss| \leq k_3 \psi(\|\rr'\|)
\right\}.
\]
The region $R$ is approximately\Footnote{Here two Riemannian manifolds are ``approximately the same'' if each can be embedded into the other in a way that does not distort size too much.} the product of a $(\Dimsum - 3)$-dimensional unit sphere with a parallelogram whose distances between pairs of opposing sides are $k_3 \psi(\|\rr\|)/\|\rr\|$ and $k_3 \psi(\|\rr'\|)/\|\rr'\|$, and whose angle between sides is approximately $\|\rr\Wedge\rr'\|/(\|\rr\|\cdot\|\rr'\|)$. Computing the volume of this shape completes the proof.
%Write $\rr = \rr'' + \alpha\rr'$, where $\rr''$ is perpendicular to $\rr$ and $\alpha\in\R$. Since $\|\rr\|\leq \|\rr'\|$, we have $|\alpha| \lessapprox 1$.
%
%Consider the affine map
%\[
%\pi(A) = (A\qq - \pp, A\qq' - \pp') \in \R^{2\qdim}.
%\]
%Then $\pi(\KK)$ is a parallelepiped in $\R^{2\qdim}$, and $\pi_* \lebesgue_\KK$ is normalized Lebesgue measure on $\pi(\KK)$. To estimate $\pi(\KK)$, we observe that for all $(\xx,\xx')\in\R^{2\qdim}$ we have
%\[
%(\xx + \pp,\xx' + \pp') = \pi\left(\frac{\xx \qq^*}{\|\qq\|^2}\right),
%\]
\end{subproof}

Thus
\begin{align*}
&\frac{1}{\Psi^\pdim(\|\rr\|)} \sum_{\substack{\rr'\in\Lambda\butnot\Z\rr \\ Q_1 \leq \|\qq'\| \leq Q_2}} \lebesgue_\KK(\Delta_\psi(\rr)\cap\Delta_\psi(\rr')) \noreason\\
&\lessapprox \sum_{\substack{\rr'\in\Lambda\butnot\Z\rr \\ \|\rr'\| \leq Q_2}} \left(\frac{\w\psi(\|\rr'\|)}{\dist(\rr',\R\rr)}\right)^\pdim \by{\eqref{lambdaintersection} and Lemma \ref{lemmaqr}}\\
&\leq \int_{B(\0,Q_2 + \cdd)} \left(\frac{\w\psi(\|\rr'\| - \cdd)}{\dist(\rr',\R\rr)/k_1}\right)^\pdim \;\dee \rr' \by{Observation \ref{observationlatticecomparison} and Claim \ref{claimfi}}\\
&\lessapprox_{\Irr(\rr)} \int_0^{Q_2 + \cdd} \int_0^{Q_2 + \cdd} \left(\frac{\w\psi(x\vee z - \cdd)}{x}\right)^\pdim x^{\Dimsum - 2} \;\dee x \;\dee z &\left(
\begin{array}{c}
\text{cylindrical coordinates}\\
\text{$x = \dist(\rr',\R\rr)$, $z = \dist(\rr',\rr^\perp)$}
\end{array}
\right)\\
&= \int_0^{Q_2 + \cdd} \w\psi^\pdim(R - \cdd) \int_0^R (R^{\qdim - 2} + r^{\qdim - 2}) \;\dee r \;\dee R
\note{$R = x\vee z$, $r = x\wedge z$}\\
&\approx \int_0^{Q_2 + \cdd} \w\psi^\pdim(R - \cdd) R^{\qdim - 1} \;\dee R \since{$\qdim \geq 2$}
\end{align*}
\begin{align*}
&= \int_{-\cdd}^{Q_2} \w\psi^\pdim(R) (R + \cdd)^{\qdim - 1} \;\dee R\\
&\lessapprox \int_{-2Q_1}^{Q_2} \w\psi^\pdim(R) (Q_1 \vee R)^{\qdim - 1} \;\dee R = F_\psi(Q_1,Q_2) + 3\phi(Q_1). \by{\eqref{diamDbound}}
\end{align*}
This completes the proof of Lemma \ref{lemmaestimates2} in the case $\qdim \geq 2$.
\end{proof}

We are now ready to complete the proof of Theorem \ref{theoremestimates}:

\begin{proof}[Proof of Theorem \ref{theoremestimates}]
The $\lesssim$ direction follows immediately from \eqref{estimates11}, so we prove the $\gtrsim$ direction. Fix $K \geq 1$ satisfying $Q_1 \geq K\Irr(\Lambda)$, and consider the function $f:\KK\to\R$ defined by the formula
\[
f(A) = \sum_{\substack{\rr\in \Lrep \\ \Irr(\rr) \leq K \\ Q_1 \leq \|\qq\| \leq Q_2}} \big[A\in \Delta_\psi(\rr)\big].
\]
Then
\begin{align*}
\|f\|_2^2 - \|f\|_1
&= \int (f^2 - f) \;\dee \lebesgue_\KK\\
&= \sum_{\substack{\rr\in \Lrep \\ \Irr(\rr) \leq K \\ Q_1 \leq \|\qq\| \leq Q_2}} \sum_{\substack{\rr'\in \Lrep \\ \Irr(\rr') \leq K \\ Q_1 \leq \|\qq'\| \leq Q_2 \\ \rr' \neq \rr}} \int \big[A\in \Delta_\psi(\rr)\big]\cdot\big[A\in \Delta_\psi(\rr')\big] \;\dee \lebesgue_\KK(A) \noreason\\
&\leq 2 \sum_{\substack{\rr\in \Lrep \\ \Irr(\rr) \leq K \\ Q_1 \leq \|\qq\| \leq Q_2}} \sum_{\substack{\rr'\in \Lambda\butnot\Z\rr \\ \|\qq\| \leq \|\qq'\| \leq Q_2}} \lebesgue_\KK\big(\Delta_\psi(\rr)\cap\Delta_\psi(\rr')\big) \noreason\\
&\lessapprox_K \sum_{\substack{\rr\in \Lrep \\ \Irr(\rr) \leq K \\ Q_1 \leq \|\rr\| \leq Q_2}} \Psi^\pdim(\|\rr\|) \wbar F_\psi(Q_1,Q_2) \by{Lemma \ref{lemmaestimates2}}\\
&\leq \wbar F_\psi(Q_1,Q_2) \sum_{\substack{\rr\in \Lambda \\ Q_1 \leq \|\rr\| \leq Q_2}} \Psi^\pdim(\|\rr\|)\\
&\lessapprox \wbar F_\psi(Q_1,Q_2) \left[Q_1^\Dimsum \Psi^\pdim(Q_1) + \int_{Q_1}^{Q_2} r^{\Dimsum - 1} \Psi^\pdim(r) \;\dee r \right] \by{Corollary \ref{corollaryspherical}}\\
&= \wbar F_\psi(Q_1,Q_2)^2.
\end{align*}
Denote the implied constant by $k_4 = h(K)$, so that
\[
\|f\|_2^2 - \|f\|_1 \leq k_4 \wbar F_\psi(Q_1,Q_2)^2.
\]
Now consider the function $g:\KK\to\{0,1\}$ which is the composition of $f$ with the map $k\mapsto \big[k\geq 1\big]$. Equivalently, $g$ is the characteristic function of the set
\[
\bigcup_{\substack{\rr\in \Lrep \\ \Irr(\rr) \leq K \\ Q_1 \leq \|\qq\| \leq Q_2}} \Delta_\psi(\rr).
\]
Since $f = fg$, H\"older's inequality gives
\[
\|f\|_1 = \|fg\|_1 \leq \|f\|_2 \cdot \|g\|_2
\]
and thus
\begin{equation}
\label{g1bound}
\lebesgue_\KK\big(W_{\psi,\Lambda}(Q_1,Q_2)\big) \geq \|g\|_1 = \|g\|_2^2 \geq \frac{\|f\|_1^2}{\|f\|_2^2}
\geq \frac{\|f\|_1^2}{\|f\|_1 + k_4 \wbar F_\psi(Q_1,Q_2)^2}
= \frac{\|f\|_1}{1 + k_4 \wbar F_\psi(Q_1,Q_2)^2/\|f\|_1}\cdot
\end{equation}
%Since
%\begin{eqnarray*}
%\|f\|_1 \eqsim{\substack{K \to \infty \\ \phi(Q_1)/F_\psi(Q_1,Q_2) \to 0}}&
%\eta_\greeks F_\psi(Q_1,Q_2)\\
%\|g\|_1 &=& \lebesgue_\KK\big(W_{\psi,\Lambda}(Q_1,Q_2)\big)
%\end{eqnarray*}
%standard quantifier logic completes the proof.
It remains to work through some quantifier logic. Fix $1 < \multerror \leq 2$. By \eqref{estimates13}, there exists a constant $R_\multerror^{(1)}\geq 1$ depending only on $\multerror$ which is large enough so that
\begin{equation}
\label{Repsilondef}
\begin{cases}
K \geq R_\multerror^{(1)}\\
Q_1 \geq K\Irr(\Lambda)\\
\phi(Q_1) \leq F_\psi(Q_1,Q_2)/R_\multerror^{(1)}
\end{cases}
\Rightarrow\;\; \|f\|_1 \geq \multerror^{-1} \eta_\greeks F_\psi(Q_1,Q_2).
\end{equation}
Letting $K = R_\multerror^{(1)}$ in \eqref{Repsilondef} gives
\[
\begin{cases}
Q_1 \geq R_\multerror^{(1)} \Irr(\Lambda)\\
\phi(Q_1) \leq F_\psi(Q_1,Q_2)/R_\multerror^{(1)}
\end{cases}
\;\;\Rightarrow\;\; \|f\|_1 \geq \eta_\greeks \multerror^{-1} F_\psi(Q_1,Q_2),
\]
and since $R_\multerror^{(1)} \geq 1$ we have
\[
\phi(Q_1) \leq F_\psi(Q_1,Q_2)/R_\multerror^{(1)} \;\;\Rightarrow\;\; \wbar F_\psi(Q_1,Q_2) \leq 2 F_\psi(Q_1,Q_2).
\]
Combining with \eqref{g1bound} gives
\[
\begin{cases}
Q_1 \geq R_\multerror^{(1)} \Irr(\Lambda)\\
\phi(Q_1) \leq F_\psi(Q_1,Q_2)/R_\multerror^{(1)}
\end{cases}
\;\;\Rightarrow\;\; \|g\|_1 \geq \eta_\greeks \multerror^{-1} F_\psi(Q_1,Q_2) \frac{1}{1 + h(R_\multerror^{(1)}) (8/\eta_\greeks) F_\psi(Q_1,Q_2)};
\]
letting $R_\multerror^{(2)} = \frac{8 h(R_\multerror^{(1)})}{\eta_\greeks(\multerror - 1)}$, we have
\[
\begin{cases}
Q_1 \geq R_\multerror^{(1)} \Irr(\Lambda)\\
\phi(Q_1) \leq F_\psi(Q_1,Q_2)/R_\multerror^{(1)}\\
F_\psi(Q_1,Q_2) \leq 1/R_\multerror^{(2)}
\end{cases}
\;\;\Rightarrow\;\; 
\lebesgue_\KK\big(W_{\psi,\Lambda}(Q_1,Q_2)\big) \geq \|g\|_1 \geq \eta_\greeks \multerror^{-2} F_\psi(Q_1,Q_2).
\]
Since $\multerror$ can be chosen arbitrarily close to $1$, this completes the proof.
\end{proof}

\draftnewpage
\section{Lower bound}
\label{sectioneasydirection}

In this section, we prove the lower bound in Theorem \ref{maintheoremv2}, namely we prove that there exists a function $C_\greeks$ such that if $L_{f,\psi} > C_{\greeks}(M_\psi)$, then $\HH^f(\Bad\psi) = \infty$. As in that theorem, we let $f$ be a dimension function, and we let $\psi$ be a nice approximation function such that $L_{f,\psi} > C_{\greeks}(M_\psi)$. Let the notation be as in Section \ref{sectionpreliminaries}.

\subsection{Sketch of the proof}
Imagine a spaceship hurtling through the space $\Omega_\Dimsum$, flying in the direction of the $(g_t)$ flow. It periodically makes corrections to its course by applying an operator from the $(u_A)$ flow, where $A\in\KK$. After an infinite amount of time, the path of the spaceship will be within a bounded distance of a path of the form $(g_t u_{A_\infty} \Lambda_*)_{t\geq 0}$ for some $A_\infty\in\MM$.

Now fix $Q_0 > 0$, and suppose you want the spaceship to steer in a way such that $A_\infty\in B_{\psi,Q_0}$, but you have only limited control of its movements. Specifically, at certain times $t_1,t_2,\ldots$ you can make the spaceship apply a $u_A$ correction where you cannot choose $A$, but you can enforce some restrictions on the possibilities for $A$ as long as you allow e.g. at least $90\%$ of the possible values for $A$. With these limitations, what strategy should you use to guarantee $A_\infty\in B_{\psi,Q_0}$?

The answer is to use the mixing property of the $(g_t)$ flow to guarantee that at the times $t_1,t_2,\ldots$, the spaceship is in a bounded region whose size is independent of $\psi$. During the ``in between times'' where $t\neq t_k$ for any $k$, the spaceship can leave this bounded region but still stays in the time-dependent region $(K(\psi,t))_{t\geq 0}$ corresponding to $\psi$ according to \eqref{Kpsitdef}.

More rigorously, choose $R \geq 1$ large enough so that the set $K_R = \{\Lambda \in \Omega_\Dimsum : \Delta(\Lambda) \leq R\}$ has measure close to $1$, say $1 - \epsilon$. Due to the mixing property of the $(g_t)$ flow, if the gap sizes $t_{k + 1} - t_k$ are large enough then the set of potential directions $A$ such that applying a $u_A$ correction at time $t_k$ will cause the spaceship to be in $K_R$ at time $t_{k + 1}$ has size at least $1 - 2\epsilon$. Thus by applying appropriate restrictions to the movement corrections, we can guarantee that the spaceship is in $K_R$ at the times $t_1,t_2,\ldots$. Since the set of directions we must discard is at most size $2\epsilon$, this requirement does not significantly hamper our ability to control the spaceship. But then each time we control the ship, we start out in $K_R$, and since $K_R$ is compact ``all points look the same'' -- any calculations we do will apply with the same quantitative bounds to all points in $K_R$. But this means that the behavior of the ship at the intervals $[t_k,t_{k + 1}]$ ($k\in\N$) can be treated as independent, so then the heuristic argument of Section \ref{sectionheuristic} finishes the proof.

\subsection{The proof}
Fix $R \geq 1$ and $0 < \beta \leq 1$, and let $(N_k)_1^\infty$ be the sequence given by Lemma \ref{lemmaNk}. Fix $k\in\N$. Recall that $N^k = \prod_{j = 1}^k N_j$, and let
\begin{align*}
t_k &= \delta\log(N^k),& g^k &= g_{t_k}.
\end{align*}
This will be our sequence of times we can steer the spaceship. Note that by \eqref{deltamotivation}, $g^k u_A = u_{N^k A} g^k$ for all $A\in\MM$.

Now when we steer the spaceship at time $t_k$, we will maneuver so as to avoid the rationals represented by points in the window
\[
\QQ_k = \{(\pp,\qq)\in\Lambda_* : e^{-2R} Q^k \leq \|\qq\| < e^{-2R} Q^{k + 1} \}.
\]
We will do this under the assumption that the lattice represented by the spaceship at time $t_k$ is contained in the compact set
\[
K_R = \{\Lambda\in\Omega_\Dimsum : \Delta(\Lambda) \leq R\}.
\]
Specifically, if
\begin{align*}
\BB_k &= \{A\in \KK : g^k u_A \Lambda_* \in K_R\}\\
\BB_k' &= \BB_k \butnot\bigcup_{\rr\in \QQ_k} \Delta_\psi(\rr) = \BB_k \butnot W_\psi(e^{-2R} Q^k, e^{-2R} Q^{k + 1})
\end{align*}
then, assuming that the current trajectory of the spaceship is represented by $(g_t u_A \Lambda_*)_{t\geq 0}$ for some $A\in\BB_k$, we will steer so that the new trajectory is represented by a point of $\BB_{k + 1}'$. Fix $k_0\in\N$, and for each $k\geq k_0$ let
\[
T^k = \{\omega\in E^k : \KK_{\omega\given j} \subset \BB_j' \text{ for all $j = k_0,\ldots,k$}\}.
\]
(For $0 \leq k < k_0$, let $T^k = E^k$.) Note that:
\begin{itemize}
\item The sets $(\QQ_k)_{k_0}^\infty$ are disjoint and their union is $\{\rr\in\Lambda_* : \|\qq\| \geq e^{-2R} Q^{k_0}\}$.
\item If $T^\infty$ is the set of infinite branches through the tree $T^* = \bigcup_k T^k \subset E^*$, then $\Bad\psi\supset\pi(T^\infty)$. To see why this is the case, fix $\omega\in T^\infty$ and $\rr\in\Lambda_*$ such that $\|\qq\| \geq e^{-2R} Q^{k_0}$; there exists $k$ such that $\rr\in\QQ_k$, and
\[
\pi(\omega)\in \KK_{\omega\given k} \subset \BB_k' \subset \KK\butnot\Delta_\psi(\rr).
\]
Since $\rr$ was arbitrary, we have $\pi(\omega) \in B_{\psi,Q_0} \subset \Bad\psi$, where $Q_0 = e^{-2R} Q^{k_0}$.
\end{itemize}

\subsection{Bounding the evaporation rate of $T^*$}
Fix $\omega\in T^*$, and let $k = |\omega|$. We partition the set $E_{k + 1}\butnot T_\omega$ into two subsets:
\begin{align*}
T_\omega' &= \{a\in E_{k + 1} : \KK_{\omega a}\nsubset \BB_{k + 1}\},\\
T_\omega'' &= E_{k + 1} \butnot (T_\omega\cup T_\omega')
= \bigcup_{\rr\in\QQ_k} T_{\omega,\rr},
\end{align*}
where
\[
T_{\omega,\rr} = \{a\in E_{k + 1}\butnot T_\omega' : \KK_{\omega a} \cap \Delta_\psi(\rr) \neq \emptyset\}.
\]
\begin{claim}
\label{claimeasydirection}
We have
\begin{equation}
\label{ETST*1}
\frac{\#(T_\omega')}{(N_{k + 1})^\Dimprod} \LessLess{\substack{R\to\infty \\ M_\psi/\beta\to 0 \; (R)}} 1
\end{equation}
and
\begin{equation}
\label{ETST*2}
\frac{\#(T_\omega'')}{(N_{k + 1})^\Dimprod} \leqsim{\substack{\beta\to 0 \\ R\to \infty \\ M_\psi/\beta \to 0 \; (R)}} \eta_{\greeks} \beta.
\end{equation}
In particular, the upper evaporation rate $(P_k^+)_1^\infty$ of $T^*$ satisfies
\[
\sup_{k\geq 1} P_k^+ \leqsim{\substack{\beta\to 0 \\ R\to\infty \; (\beta) \\ M_\psi/\beta\to 0 \; (R,\beta)}} \eta_{\greeks} \beta.
\]
\end{claim}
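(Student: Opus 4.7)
The plan is to transport both statements to the single cube $\KK$ endowed with the shifted lattice $\Lambda_\omega$. Using the conjugation relation $g^k u_X g^{-k} = u_{N^k X}$, one verifies that for $A \in \KK_\omega$ written as $A = \pi(\omega) + C/N^k$ with $C \in \KK$,
\[
g^{k+1} u_A \Lambda_* = g_{\tau_k}\, u_C\, \Lambda_\omega, \qquad \tau_k := \delta\log N_{k+1},
\]
and under $A \leftrightarrow C$ the sub-cube $\KK_{\omega a}$ corresponds to the scale-$1/N_{k+1}$ sub-cube $\KK^\flat_a := (a+\KK)/N_{k+1}$ of $\KK$. The hypothesis $\omega \in T^k$ forces $\pi(\omega) \in \BB_k$, whence $\Lambda_\omega \in K_R$; in particular $\minkowski(\Lambda_\omega) \geq e^{-R}$, so $\Irr(\Lambda_\omega) \lessapprox e^{R(2\Dimsum-1)}$.

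\textbf{Part~1.} A sub-cube $\KK_{\omega a}$ fails to lie in $\BB_{k+1}$ exactly when some $C \in \KK^\flat_a$ satisfies $g_{\tau_k} u_C \Lambda_\omega \notin K_R$; the Kleinbock--Margulis correspondence translates this into $\Lambda_\omega$ possessing a nonzero vector $(p_\omega, q_\omega)$ with $\|q_\omega\| \leq e^{-R}(N_{k+1})^{m/(m+n)}$ and $C \in \Delta_{\psi^\star}(p_\omega, q_\omega)$, where $\psi^\star \equiv e^{-R}(N_{k+1})^{-n/(m+n)}$. Summing the elementary bound $\lebesgue_\KK(\Delta_{\psi^\star}(v)) \lessapprox (\psi^\star)^m / \|q_\omega\|^m$ over admissible lattice vectors, exactly as in Step~1 of the proof of Lemma~\ref{lemmaestimates1}, yields $\lebesgue_\KK(\{\text{bad } C\}) \lessapprox e^{-R(m+n)}$. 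A short Lipschitz argument using $g_{\tau_k}u_{C-C'}g_{-\tau_k} = u_{N_{k+1}(C-C')}$ (which has bounded norm when $C,C'\in\KK^\flat_a$) shows that any $\KK^\flat_a$ touching the bad set lies in the bad set at threshold $R - O(1)$, so $\#(T_\omega')/(N_{k+1})^\Dimprod \lessapprox e^{-R(m+n)}$, which tends to $0$ as $R\to\infty$, proving \eqref{ETST*1}.

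\textbf{Part~2 and main obstacle.} For $a \in T_\omega''$, Lemma~\ref{lemmathickness} with $\psi_2(\|\qq\|) := \|\KK\|\|\qq\|/N^{k+1}$ promotes the intersection $\KK_{\omega a} \cap \Delta_\psi(\rr) \neq \emptyset$ to the inclusion $\KK_{\omega a} \subset \Delta_{\psi+\psi_2}(\rr)$, so
\[
\bigcup_{a \in T_\omega''} \KK_{\omega a} \subset \KK_\omega \cap W_{\psi+\psi_2,\Lambda_*}\bigl(e^{-2R}Q^{k+1},\, e^{-2R}Q^{k+2}\bigr).
\]
Applying Theorem~\ref{theoremestimates} to $\Lambda_\omega$ after the rescaling $q \mapsto q/Q^k$, $\psi \mapsto (Q^k)^{n/m}(\psi+\psi_2)(Q^k\,\cdot\,)$ (the same reduction that underlies Corollary~\ref{corollaryestimates}) bounds the right-hand measure by $\sim (N^k)^{-\Dimprod}\eta_\greeks F_{\psi+\psi_2}(e^{-2R}Q^{k+1}, e^{-2R}Q^{k+2})$, and \eqref{Fpsibounds} together with the nice-approximation property $\phi\to 0$ identify the latter with $F_\psi(Q^{k+1}, Q^{k+2}) \sim \beta$ in the stated limits; dividing by $(N^{k+1})^{-\Dimprod}$ gives $\#(T_\omega'')/(N_{k+1})^\Dimprod \lessapprox \eta_\greeks\beta$. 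The principal technical hurdle is the regularity hypothesis $Q_1'/\Irr(\Lambda_\omega)\to\infty$ in Theorem~\ref{theoremestimates}: combining $\Irr(\Lambda_\omega)\lessapprox e^{R(2\Dimsum-1)}$ with $Q_1'/Q^k \geq e^{-2R}(N_{k+1})^{m/(m+n)}$ reduces it to $\log N_{k+1} \gg R$, which by Lemma~\ref{lemmaNk}'s estimate $\log N_{k+1} \gtrsim \beta/M_\psi$ holds precisely in the limit $M_\psi/\beta \to 0$ at fixed $(R,\beta)$, matching the nested limit structure of the claim.
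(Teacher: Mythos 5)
Both halves of your argument have genuine gaps. For \eqref{ETST*1} you replace the paper's mechanism --- mixing of the $(g_t)$ flow via Howe--Moore, together with an equicontinuity argument that makes the convergence uniform over $\Lambda_\omega \in K_R$ --- by a first-moment count of the vectors of $\Lambda_\omega$ that become short under $g_{\tau_k}u_C$. The summation you invoke ``exactly as in Step 1 of the proof of Lemma \ref{lemmaestimates1}'' is not available here: that step compares a lattice sum to an integral via Observation \ref{observationlatticecomparison} and needs the range of summation to start well above the codiameter of the lattice, whereas your sum runs over $\|\qq'\|$ between roughly $e^{-R}$ and $e^{-R}(N_{k+1})^{\pdim/\dimsum}$ while $\Codiam{\Lambda_\omega}$ is only bounded by $\approx e^{(\Dimsum-1)R}$ for $\Lambda_\omega\in K_R$. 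Concretely, a unimodular lattice with $\lambda_1=\cdots=\lambda_{\Dimsum-1}\approx e^{-R}$ lies in $K_R$ but has $\approx (Te^{R})^{\Dimsum-1}$ points in $\ball\0 T$, and then the claimed bound $e^{-R(\dimsum)}$ fails uniformly: one picks up terms decaying only in $N_{k+1}$, not in $R$. Making a counting argument uniform over $K_R$ is essentially a quantitative nondivergence statement, which is exactly what the paper's mixing argument is designed to avoid.

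For \eqref{ETST*2} the gap is more serious. First, your range $(e^{-2R}Q^{k+1},e^{-2R}Q^{k+2})$ is off by one: for $|\omega|=k$ the set $T_\omega''$ is built from $\QQ_k$, i.e.\ $e^{-2R}Q^{k}\leq\|\qq\|<e^{-2R}Q^{k+1}$. With the correct range the lower endpoint rescales to $Q_1'=e^{-2R}$, so the hypothesis $Q_1'/\Irr(\Lambda_\omega)\to\infty$ of Corollary \ref{corollaryestimates} fails outright; this is why the paper splits off the initial segment $[e^{-2R}Q^k,KQ^k]$ as ``Term 1'' and bounds it by $\lessapprox_{R,K}M_\psi$ using the bounded number of points of $\Lambda_\omega\in K_R$ in $\ball\0 K$; your proposal has no analogue of this splitting. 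Second, the thickening $\psi\mapsto\psi+\psi_2$ with $\psi_2(q)=\diamcube\,q/N^{k+1}$ is too lossy: near the top of the range $\psi_2(q)\approx\diamcube e^{-2R}(Q^{k+1})^{-\qdim/\pdim}$ while $\psi(q)\approx\phi^{1/\pdim}(Q^{k+1})(Q^{k+1})^{-\qdim/\pdim}$, so $\psi_2/\psi\approx e^{-2R}\phi^{-1/\pdim}$ blows up as $M_\psi\to0$; moreover $F_{\psi_2}\approx e^{-2R\Dimsum}$ is independent of $\beta$ and $M_\psi$, and $(\psi+\psi_2)/\psi_\ast$ is not nonincreasing, so Theorem \ref{theoremestimates} does not apply and $F_{\psi+\psi_2}$ is not asymptotic to $\beta$ in the stated limits. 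The paper avoids all of this by exploiting $a\notin T_\omega'$: the dichotomy \eqref{dichotomy} forces $\|A\qq-\pp\|\geq e^{R}\|\qq\|/N^{k+1}$, so the thickening is only the multiplicative factor $1+\diamcube e^{-R}$, i.e.\ $\KK_{\omega a}\subset\Delta_{(1+\epsilon_R)\psi}(\rr)$. This interaction between the two halves of the claim is the key idea your proposal misses; without it you also cannot obtain the sharp constant (ratio tending to $1$) that \eqref{ETST*2} asserts and that the main theorems require, as opposed to the coarse bound $\lessapprox\eta_\greeks\beta$ you actually state.
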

\begin{proof}[Proof of \eqref{ETST*1}]
Fix $\epsilon > 0$. Let $\haar$ denote the unique probability Haar measure on the space of lattices $\Omega_\Dimsum$. Since $\bigcup_{R \geq 1} K_R = \Omega_\Dimsum$, there exists $R_\epsilon \geq 1$ such that
\[
\haar(K_{R_\epsilon}) > 1 - \epsilon.
\]
We observe that the group $G = \SL_\Dimsum(\R)$ can be written as the product of the two subgroups
\begin{align*}
U &\df \{u_A : A\in \MM\}\\
H &\df \{M\in G : M_{i,j} = 0 \all i = 1,\ldots,\pdim \all j = \pdim + 1,\ldots,\pdim + \qdim\}.
\end{align*}
Let $\nu_1$ be an absolutely continuous and compactly supported probability measure on $H$, and let $\nu_2$ be the image of $\lebesgue_\KK$ under the map $A\mapsto u_A$. Let $\nu$ denote the image of $\nu_1\times\nu_2$ under the map $(h,u)\mapsto hu$. Then $\nu$ is an absolutely continuous and compactly supported probability measure on $G$. It follows that the map $f:\Omega_\Dimsum \to \MM(\Omega_\Dimsum)$ defined by the formula
\[
f(\Lambda) \df \int \delta_{g\Lambda} \;\dee \nu(g) \in \MM(\Omega_\Dimsum)
\]
is continuous with respect to the norm topology on $\MM(\Omega_\Dimsum)$, the space of measures on $\Omega_\Dimsum$. (Here $\delta_x$ denotes the Dirac delta mass at $x$.) Equivalently, $f(\Lambda)$ is the measure on $\Omega_\Dimsum$ such that for all $S \subset \Omega_\Dimsum$, we have $f(\Lambda)[S] = \nu(\{g\in G : g\Lambda \in S\})$. Thus the family of functions
\[
f_t(\Lambda) \df f(\Lambda)[g_{-t} K_{R_\epsilon}] = \nu(\{g : g_t g\Lambda \in K_{R_\epsilon}\}) \;\; (t\in\R)
\]
is equicontinuous. On the other hand, by the Howe--Moore theorem \cite[Theorem 5.2]{HoweMoore}, the matrix coefficients of the natural representation of $G$ on $L^2_0(\Omega_\generic,\haar)$ vanish at infinity, meaning that the action of $G$ on $\Omega_\generic$ is mixing with respect to the natural measure $\haar$. It follows that the $(g_t)$ flow on $\Omega_\generic$ is also mixing, so for any fixed $\Lambda$ we have
\[
f_t(\Lambda) = \int \big[g_t x\in K_{R_\epsilon}\big] \frac{\dee [g\mapsto g\Lambda]_* \nu}{\dee\hspace{-0.025 in}\haar}(x) \;\dee\hspace{-0.025 in}\haar(x) \tendsto{t\to\infty \; (\Lambda)} \haar(K_{R_\epsilon}) > 1 - \epsilon.
\]
Equicontinuity implies that this convergence is uniform when $\Lambda$ is restricted to any compact set.

Since the map $h\mapsto g_t h g_{-t}$ is nonexpanding on $H$ and since $\Delta$ is uniformly continuous, there exists a constant $k_6 > 0$ such that
\begin{equation}
\label{K1def}
\Delta(g_t h u_A \Lambda) \geq \Delta(g_t u_A \Lambda) - k_6 \all A\in\KK \all h\in \Supp(\nu_1) \all \Lambda\in\Omega_\Dimsum \all t\geq 0.
\end{equation}
Also, there exists a constant $k_7 > 0$ such that
\begin{equation}
\label{K2}
|\Delta(u_{A - B}\Lambda) - \Delta(\Lambda)| \leq k_7 \all A,B\in\KK \all \Lambda\in\Omega_\Dimsum.
\end{equation}
Fix $R \geq R_\epsilon + k_6 + k_7$, and we will show that
\begin{equation}
\label{ETST*1mod}
\#(T_\omega') \leqsim{M_\psi/\beta\to 0 \; (R)} \epsilon (N_{k + 1})^\Dimprod,
\end{equation}
which suffices to prove \eqref{ETST*1}. Indeed, let $t_R$ be large enough so that $f_t(\Lambda) \geq 1 - \epsilon$ for all $\Lambda\in K_R$ and $t\geq t_R$. Equivalently,
\[
(\nu_1\times\lebesgue_\KK)\left(\left\{(h,A): \Delta(g_t h u_A \Lambda) > R_\epsilon\right\}\right) \leq \epsilon \all \Lambda\in K_R \all t\geq t_R.
\]
Applying \eqref{K1def} gives
\begin{equation}
\label{tsufflarge}
\lebesgue_\KK\left(\left\{A: \Delta(g_t u_A \Lambda) > R_\epsilon + k_6\right\}\right) \leq \epsilon \all \Lambda\in K_R \all t\geq t_R.
\end{equation}
Now fix $\omega\in T^*$, and let $k = |\omega|$. Since $\omega\in T^k$, we have $\pi(\omega) \in \BB_k'$ and thus $\Lambda_\omega \df g^k u_{\pi(\omega)} \Lambda_\ast \in K_R$. Moreover, if we assume $M_\psi/\beta \leq \delta/(\alpha t_R)$ (a valid assumption given the convergence assumption $M_\psi/\beta \to 0 \; (R)$), then \eqref{FpsiMpsibounds} and \eqref{Nkdef} together imply that $t_{k + 1} - t_k = \delta\log(N_{k + 1}) \geq t_R$. Thus, we can plug $\Lambda = \Lambda_\omega$ and $t = t_{k + 1} - t_k = \delta\log(N_{k + 1})$ into \eqref{tsufflarge}. Now by \eqref{deltamotivation}, we have
\[
g_t u_A \Lambda = g_t u_A g^k u_{\pi(\omega)} \Lambda_\ast = g^{k + 1} u_{\pi(\omega) + (N^k)^{-1} A} \Lambda_\ast = g^{k + 1} u_{\Phi_\omega(A)} \Lambda_\ast.
\]
Thus, applying a change of variables to \eqref{tsufflarge} yields
\[
(N^k)^\Dimprod \lebesgue_\KK\left(\left\{A\in \KK_\omega: \Delta(g^{k + 1} u_A \Lambda_*) > R_\epsilon + k_6 \right\}\right) \leq \epsilon.
\]
Fix $a\in E_{k + 1}$; then for all $A,B\in\KK_{\omega a}$ we have
\[
|\Delta(g^{k + 1} u_B \Lambda_*) - \Delta(g^{k + 1} u_A \Lambda_*)| = |\Delta(u_{N^{k + 1}(B - A)} g^{k + 1} u_A \Lambda_*) - \Delta(g^{k + 1} u_A \Lambda_*)| \leq k_7.
\]
It follows that
\begin{align*}
\left\{A\in \KK_\omega: \Delta(g^{k + 1} u_A \Lambda_*) > R_\epsilon + k_6\right\}
&\supset \bigcup\Big\{\KK_{\omega a} : \exists A\in \KK_{\omega a} \;\; \Delta(g^{k + 1} u_A \Lambda_*) > R_\epsilon + k_6 + k_7\Big\} \noreason\\
&\supset \bigcup_{a\in T_\omega'}\KK_{\omega a} \since{$R\geq R_\epsilon + k_6 + k_7$}
\end{align*}
and thus
\[
(N^k)^\Dimprod \lebesgue_\KK\left(\bigcup_{a\in T_\omega'}\KK_{\omega a}\right) \leq \epsilon.
\]
Direct computation and rearrangement now yields \eqref{ETST*1mod}.
\end{proof}

\begin{proof}[Proof of \eqref{ETST*2}]
Fix $\rr = (\pp,\qq)\in\QQ_k$ and $a\in T_{\omega,\rr}$. By definition, there exists $A\in \KK_{\omega a}\cap \Delta_\psi(\rr)$. Since $a\notin T_\omega'$, we have $A\in\BB_{k + 1}$ and thus
\[
g^{k + 1} u_A \rr \notin B(\0,e^{-R}).
\]
Equivalently,
\begin{equation}
\label{dichotomy}
\frac{1}{Q^{k + 1}}\|\qq\| > e^{-R} \text{ or } (N^{k + 1})^{\delta/\pdim} \|A\qq - \pp\| > e^{-R}.
\end{equation}
On the other hand, since $\rr\in\QQ_k$ we have
\[
\|\qq\| \leq e^{-2R} Q^{k + 1}.
\]
In particular, this contradicts the first half of the dichotomy \eqref{dichotomy}, so it must be the last half which holds. Thus
\[
\|A\qq - \pp\| \geq e^{-R} (N^{k + 1})^{-\delta/\pdim} = e^{-R} \frac{Q^{k + 1}}{N^{k + 1}} \geq e^R \frac{\|\qq\|}{N^{k + 1}}\cdot
\]
Since $\diam(\KK_{\omega a}) = \diamcube/N^{k + 1}$, we have for all $B \in \KK_{\omega a}$
\begin{align*}
\|B\qq - \pp\| &\leq \|A\qq - \pp\| + \|B - A\|\cdot\|\qq\|\\
&\leq \|A\qq - \pp\| + \frac{\diamcube\cdot\|\qq\|}{N^{k + 1}}\\
&\leq (1 + \diamcube e^{-R})\|A\qq - \pp\|
\leq (1 + \diamcube e^{-R})\psi(\|\qq\|).
\end{align*}
Letting $\epsilon = \epsilon_R = \diamcube e^{-R}$, we have
\[
\KK_{\omega a} \subset \Delta_{(1 + \epsilon)\psi}(\rr) \subset W_{(1 + \epsilon)\psi}(e^{-2R} Q^k,e^{-2R} Q^{k + 1}) \all \rr\in\QQ_k \all a\in T_{\omega,\rr}
\]
and thus
\[
\frac{\#(T_\omega'')}{(N^{k + 1})^\Dimprod}
= \sum_{a\in T_\omega''} \lebesgue_\KK(\KK_{\omega a})
\leq \lebesgue_\KK\left(\KK_\omega\cap W_{(1 + \epsilon)\psi}(e^{-2R} Q^k,e^{-2R} Q^{k + 1})\right).
\]
Now let $g_\omega = g^k u_{\pi(\omega)}$ and $\Lambda_\omega = g_\omega \Lambda_*$ be as in Corollary \ref{corollaryestimates}, and let $\psi'(q) = (Q^k)^{\qdim/\pdim} \psi(Q^k q)$ be as in the proof of Corollary \ref{corollaryestimates}. Fix $K\geq 1$. We have
\begin{align*}
\frac{\#(T_\omega'')}{(N_{k + 1})^\Dimprod}
&\leq \underbrace{\lebesgue_\KK\left(W_{(1 + \epsilon)\psi',\Lambda_\omega}(e^{-2R},K)\right)}_{\text{Term 1}} + \underbrace{(N^k)^\Dimprod\lebesgue_\KK\left(\KK_\omega\cap W_{(1 + \epsilon)\psi}(K Q^k,e^{-2R} Q^{k + 1})\right)}_{\text{Term 2}}.
\end{align*}
Now since $\omega\in T^k$, we have $\pi(\omega) \in \BB_k$ and thus $\Lambda_\omega\in K_R$. So $\#(\Lambda_\omega\cap \ball\0 K)$ is bounded depending only on $R$ and $K$. Thus by \eqref{crudebound},
\[
\text{Term 1} \lessapprox ((1 + \epsilon)\Psi')^\pdim(e^{-2R}) \#(\Lambda_\omega\cap \ball \0 K) \lessapprox_{R,K} \phi'(e^{-2R}) = \phi(e^{-2R} Q^k) \leq \phi(1) = M_\psi.
\]
On the other hand, we have
\begin{align*}
K/\Irr(\Lambda_\omega) &\tendsto{K\to\infty \; (R)} \infty  \since{$\Lambda_\omega\in K_R$}\\
F_{(1 + \epsilon)\psi}(Q^k,K Q^k) &\leq (1 + \epsilon)^\pdim \log(K) M_\psi \noreason\\
F_{(1 + \epsilon)\psi}(e^{-2R} Q^{k + 1},Q^{k + 1}) &\leq (1 + \epsilon)^\pdim 2 R M_\psi \noreason
\end{align*}
and thus by Corollary \ref{corollaryestimates},
\[
\text{Term 2} \eqsim{\substack{K\to \infty \; (R) \\ \beta \to 0 \\ M_\psi/\beta \to 0 \; (R,K)}} \eta_\greeks (1 + \epsilon_R) \beta \eqsim{R\to \infty} \eta_\greeks \beta.
\]
Standard quantifier logic completes the proof.
\end{proof}

%\begin{lemma}
%\label{lemmakhinchinconvergence}
%Fix $R,T > 0$, $r:[0,T]\to\Rplus$, and $\Lambda\in\Omega_\Dimsum_{\dimsum}$, and let
%\[
%P_{T,r,\Lambda} = \{A\in \KK: \text{ there exists $0 \leq t \leq T$ such that $\Delta(g_t u_A \Lambda)\geq r(t)$}\}.
%\]
%If $\Delta(\Lambda)\leq R$, then
%\[
%\lebesgue(P_{T,r,\Lambda}) \lessapprox_R \int_{t = 0}^T \text{(SOMETHING DEPENDING ON $r$)}\;\dee t.
%\]
%\end{lemma}
%\begin{proof}
%For $t\in\N$, $\rr\in\Lambda$, and $A\in\KK$, we have
%\begin{align*}
%\|g_t u_A \rr\| \leq e^{-r(t)} \text{ if and only if}\\
%e^{-t/\qdim}\|\pi_\qdim(\rr)\| \leq e^{-r(t)} \text{ and } e^{t/\pdim} \|\pi_\pdim(\rr) + A \pi_\qdim(\rr)\| \leq e^{-r(t)} \text{ if and only if}\\
%\|\pi_\qdim(\rr)\| \leq e^{t/\qdim - r(t)} \text{ and } \|\pi_\pdim(\rr) + A \pi_\qdim(\rr)\| \leq e^{-t/\pdim - r(t)}.
%\end{align*}
%Thus, if 
%
%
%For $t\in\N$ and $\rr\in\Lambda$ fixed, the probability that $\|g_t u_A \rr\| \leq e^{-r(t)}$ is equal to zero if $\|\pi_\qdim(\rr)\| > e^{t - r(t)}$ or $\|\pi_\pdim(\rr)\| > \|\pi_\qdim(\rr)\|$
%
%For $A\in\KK$, we have $A\in P_{T,r,\Lambda}$ if and only if there exist $\rr\in\Lambda$ and $0\leq t\leq T$ such that $\|g_t u_A \rr\| \leq e^{-r(t)}$. 
%\end{proof}

\subsection{Completion of the proof}
For each $\multerror > 1$, we have
\[
\exp(-\multerror \eta_\greeks \beta) \leq 1 - \multerror^{1/2} \eta_\greeks \beta
\]
for all sufficiently small $\beta$, and thus by Claim \ref{claimeasydirection} there exist $h_1(\multerror),h_2(\multerror),h_3(\multerror) > 0$ (depending only on $\multerror$ and $\constants$) such that
\begin{equation}
\label{hdef}
\begin{cases}
\beta = h_1(\multerror)\\
R = h_2(\multerror)\\
M_\psi/\beta = h_3(\multerror)
\end{cases}
\;\;\Rightarrow\;\;
\left[1 - \sup_{k\geq 1} P_k^+ \geq \exp(-\multerror \eta_\greeks \beta)\right].
\end{equation}
Without loss of generality, we may assume that $h_1,h_2,h_3$ are homeomorphisms from $[1,\infty]$ to $[0,\beta_0]$, $[R_0,\infty]$, and $[0,\delta_0]$ for some $\beta_0,R_0,\delta_0 > 0$ (with $h_1,h_3$ increasing and $h_2$ decreasing). In particular, $h_1(1) = h_3(1) = 0$ while $h_2(1) = \infty$. Then the function $C_\greeks$ in the statement of Theorem \ref{maintheoremv2} can be defined by the formula
\[
C_\greeks(M) = \begin{cases}
((h_1 h_3)^{-1}(M)) \eta_\greeks & \text{if } M \leq \beta_0 \delta_0 \\
\infty & \text{if } M \geq \beta_0 \delta_0
\end{cases}.
\]
Suppose that $L_{f,\psi} > C_\greeks(M_\psi)$ (in particular this implies $C_\greeks(M_\psi) < \infty$ and thus $M_\psi < \beta_0 \delta_0$), and let $\multerror \df (h_1 h_3)^{-1}(M_\psi)$, $\beta \df h_1(\multerror)$, and $R \df h_2(\multerror)$, so that $C_\greeks(M_\psi) = \multerror \eta_\greeks$. By \eqref{hdef}, we have $1 - \sup_{k\geq 1} P_k^+ \geq \exp(-\multerror \eta_\greeks \beta)$. Thus the function $f_+$ from  \6\ref{subsectionevaporationrates} satisfies
\[
f_+(\rho) \leq \left(\prod_{j = 1}^{k(\rho)} \exp(\multerror \eta_\greeks \beta)\right) \rho^\Dimprod = \exp({\multerror \eta_\greeks \beta k(\rho)}) \rho^\Dimprod.
\]
We can relate $\beta k(\rho)$ to $\rho$ via the definition of the sequence $(N_k)_1^\infty$:
\begin{align*}
F_\psi(\rho^{-\alpha}) &\geq F_\psi\big((N^{k(\rho)})^\alpha\big) = F_\psi(Q^{k(\rho)}) \noreason\\
&= \sum_{k = 0}^{k(\rho) - 1} F_\psi(Q^k,Q^{k + 1})\\
&\geq \sum_{k = 0}^{k(\rho) - 1} \beta \by{\eqref{Nkdef}}\\
&= \beta k(\rho).
\end{align*}
Combining gives
\[
f_+(\rho) \leq \exp({\multerror \eta_\greeks F_\psi(\rho^{-\alpha})}) \rho^\Dimprod
\]
and thus by Proposition \ref{propositionevaporating}(i),
\[
\HH^f(\Bad\psi) \gtrapprox \liminf_{\rho\to 0} \frac{\rho^{-\Dimprod} f(\rho)}{\exp({\multerror \eta_\greeks F_\psi(\rho^{-\alpha})})} = \liminf_{\rho \to 0} \frac{\exp\log\left(\frac{f(\rho)}{f_\ast(\rho)}\right)}{\exp(\multerror \eta_\greeks F_\psi(\rho^{-\alpha}))}\cdot
\]
Since $L_{f,\psi} > C_\greeks(M_\psi) = \multerror \eta_\greeks$, the right-hand side is equal to $\infty$, which completes the proof.

\ignore{
Let $\tau = 1 - \sup_k P_k^+ > 0$. Then for all $k\in\N$,
\[
M_-^k/(N^k)^\Dimprod \geq \tau^k.
\]
Thus for all $1/N^{k + 1} < \rho \leq 1/N^k$, we have
\[
f_+(\rho) \leq \rho^\Dimprod/\tau^k.
\]
On the other hand,
\[
k\beta \leq F_\psi(Q^k) \leq F_\psi(\rho^{-\pdim/(\dimsum)})
\]
and so
\[
f_+(\rho) \leq \rho^\Dimprod K^{F_\psi(\rho^{-\pdim/(\dimsum)})},
\]
where $K = \tau^{-1/\beta} > 1$.

Now, since $L_{f,\psi} > C_{\greeks}(M_\psi)$, we have
\[
\log\left(\frac{f(\rho)}{f_\ast(\rho)}\right) \geq C_{\greeks}(M_\psi)F_\psi(\rho^{-\alpha})
\]
for all sufficiently small $x$. Setting $x = \rho$ and rearranging gives
\[
f(\rho) \geq \rho^\Dimprod e^{C_{\greeks}(M_\psi)F_\psi(\rho^{-\pdim/(\dimsum)})}.
\]
Letting $C_{\greeks}(M_\psi) > \log(K)$, we see that $f(\rho)/f_+(\rho)\to \infty$ as $\rho\to 0$. Thus by Proposition \ref{propositionevaporating}, we have $\HH^f(\Bad\psi) = \infty$.
}% end ignore

\draftnewpage
\section{Upper bound}
\label{sectionharddirection}

In this section, we prove the upper bound in Theorem \ref{maintheoremv2}, namely we prove that there exists a function $c_\greeks$ such that if $L_{f,\psi} < c_{\greeks}(M_\psi)$, then $\HH^f(\Bad\psi) = 0$. As in that theorem, we let $f$ be a dimension function, and we let $\psi$ be a nice approximation function such that $L_{f,\psi} < c_{\greeks}(M_\psi)$. Let the notation be as in Section \ref{sectionpreliminaries}.

\subsection{Sketch of the proof}
As before, imagine a spaceship flying through $\Omega_\Dimsum$ in the direction of the $(g_t)$ flow, periodically making $(u_A)$ corrections, whose path eventually corresponds to a path of the form $(g_t u_{A_\infty} \Lambda_*)_{t\geq 0}$ for some $A_\infty\in\MM$. But this time, suppose that you want to steer in a way such that $A_\infty \in W_{\psi,Q_0}$. Instead of being able to direct the movement of the spaceship at all of the times $t_1,t_2,\ldots$ when it makes $(u_A)$ corrections, you can only restrict its movement at one of those times -- better make it count! In this scenario, what is the right strategy?

The answer again depends on the spaceship being in a bounded region, but this time, we don't have to work to move it into that region -- if the spaceship ever leaves the time-dependent region $(K(\psi,t))_{t\geq 0}$ defined by \eqref{Kpsitdef}, then by definition we will have $A_\infty\in \Approx\psi$ regardless of any choices we make. So we can assume that at time $t_k$, the spaceship is not in $K(\psi,t_k)$. From this vantage point, we can attempt to move into $W_{\psi,Q_0}$ by moving into $\Delta_\psi(\rr)$ for some $\rr\in\Lambda_*$ with $\|\qq\|\geq Q_0$. According to Corollary \ref{corollaryestimates}, this strategy will work as long as the disadvantage that comes from being in $K(\psi,t_k)$ rather than a fixed compact region is small in comparison to the total size of $W_{\psi,Q_0}$ relative to $\Lambda$, i.e. as long as
\[
F_\psi(Q^k, \Irr(\Lambda_\omega) Q^k)
\]
is small. But by \eqref{Fpsibounds},
\[
F_\psi(Q^k, \Irr(\Lambda_\omega) Q^k) \leq \phi(Q^k) \log(\Irr(\Lambda_\omega)) \lessapprox \phi(Q^k) r_\psi(t_k) \sim \phi(Q^k) |\log\phi(Q^k)| \tendsto{M_\psi \to 0} 0,
\]
where the last convergence holds because $\phi(Q^k) \leq M_\psi$.

\subsection{The proof}
Fix $0 < \beta \leq 1$, let the sequence $(N_k)_1^\infty$ (depending on $\psi$ and $\beta$) be given by Lemma \ref{lemmaNk}, and let the notation be as in Section \ref{sectionpreliminaries}. Fix $Q_0 > 0$, and for each $k$ let
\[
T^k = \{\omega\in E^k : \KK_\omega\cap B_{\psi,Q_0}\neq \emptyset\}
\]
%T^k &= \{\omega\in E^k : \text{there is no $\rr = (\pp,\qq)\in\Lambda_*$ such that $\|\qq\| \geq Q$ and $\KK_\omega \subset \Delta_\psi(\rr)$}\}\\
(cf. \eqref{BpsiQdef}). Then $B_{\psi,Q_0} = \pi(T^\infty)$.

\begin{lemma}
\label{lemmaharddirection}
The lower evaporation rate $(P_k^-)_1^\infty$ of $T^*$ satisfies
\[
\liminf_{k\to\infty} P_k^- \geqsim{\substack{M_\psi^{1/2}/\beta\to 0 \\ \beta \to 0}} \eta_\greeks\beta.
\]
\end{lemma}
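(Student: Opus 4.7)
The plan is to mimic the heuristic calculation of Section \ref{sectionheuristic} at every node of the tree, using Lemma \ref{lemmathickness} to convert ``fully swallowed'' child cubes into a Lebesgue measure estimate, and then applying Corollary \ref{corollaryestimates} to produce that estimate. Fix $\omega \in T^{k-1}$ with $k$ large enough that $Q^{k-1} \geq Q_0$. A child $a \in E_k$ is removed exactly when $\KK_{\omega a} \subset W_{\psi, Q_0}$. Let $\rho = \diamcube/N^k$ be the diameter of $\KK_{\omega a}$ and set $\psi_\rho(q) = \max(\psi(q) - \rho q, 0)$. Applying Lemma \ref{lemmathickness} with $\psi_1 = \psi_\rho$ and $\psi_2(q) = \rho q$ shows that every $A \in \Delta_{\psi_\rho}(\rr)$ satisfies $\ball{A}{\rho} \subset \Delta_\psi(\rr)$, so any child cube meeting $\Delta_{\psi_\rho}(\rr)$ is entirely contained in $\Delta_\psi(\rr)$, and (provided $\|\qq\| \geq Q_0$) entirely in $W_{\psi, Q_0}$. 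A volume-to-count comparison then yields
\[
\#(E_k \butnot T_\omega) \;\geq\; (N^k)^\Dimprod \, \lebesgue_\KK\bigl(\KK_\omega \cap W_{\psi_\rho}(Q_1, Q_2)\bigr)
\]
for any $Q^{k-1} \leq Q_1 \leq Q_2 \leq Q^k$, so it suffices to bound the right-hand side below by $\eta_\greeks \beta (N_k)^\Dimprod$ up to lower-order terms.

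To produce that bound I apply Corollary \ref{corollaryestimates} to $\psi_\rho$ with $\omega$ of length $k-1$. The natural choice is $Q_2 = c Q^k$ with $c = (M_\psi/\diamcube^\pdim)^{1/(\dimsum)}$, which guarantees $\rho q \leq \psi(q)/C_1$ uniformly on $[Q^{k-1}, Q_2]$ for any fixed large constant $C_1$ once $M_\psi$ is small enough; in particular $\psi_\rho \sim \psi$ on that interval. The truncated tail satisfies
\[
F_\psi(Q_2, Q^k) \;\leq\; \phi(Q_2) \log(Q^k/Q_2) \;\lessapprox\; M_\psi |\log M_\psi|,
\]
which is $o(\beta)$ under the hypothesis $M_\psi^{1/2}/\beta \to 0$. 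Combined with Lemma \ref{lemmaNk} this gives $F_{\psi_\rho}(Q^{k-1}, Q_2) \sim \beta$, and the corollary then delivers $\lebesgue_\KK(\KK_\omega \cap W_{\psi_\rho}(Q^{k-1}, Q_2)) \sim (N^{k-1})^{-\Dimprod} \eta_\greeks \beta$, which rearranges to the required bound.

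The main obstacle is verifying the irregularity hypothesis $Q_1/(Q^{k-1} \Irr(\Lambda_\omega)) \to \infty$ of Corollary \ref{corollaryestimates} uniformly over $\omega \in T^{k-1}$. The crux is that a very short nonzero vector $g_\omega \rr$ of $\Lambda_\omega$, with $\|g_\omega \rr\| \leq \phi(Q^{k-1})^{1/(\dimsum)}/C$, corresponds to an integer vector $\rr = (\pp, \qq)$ such that the triangle inequality on $\KK_\omega$ (whose diameter is $\diamcube/N^{k-1}$) forces every $A \in \KK_\omega$ into $\Delta_\psi(\rr)$; if $\|\qq\| \geq Q_0$ this contradicts $\omega \in T^{k-1}$. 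Quantifying gives $\minkowski(\Lambda_\omega) \gtrapprox \phi(Q^{k-1})^{1/(\dimsum)}$ and hence $\Irr(\Lambda_\omega) \lessapprox \phi(Q^{k-1})^{-(2\Dimsum - 1)/(\dimsum)}$. Meanwhile Lemma \ref{lemmaNk} yields $\log N_k \gtrsim \beta/\phi(Q^{k-1})$, so $N_k^{\pdim/(\dimsum)} = Q^k/Q^{k-1}$ grows super-polynomially in $1/\phi(Q^{k-1})$ as $\beta/M_\psi \to \infty$. This leaves ample room to choose $Q_1 \in [Q^{k-1}\, \Irr(\Lambda_\omega) K,\, cQ^k]$ with $K \to \infty$, so that $Q_1/(Q^{k-1} \Irr(\Lambda_\omega)) \to \infty$ while $F_\psi(Q^{k-1}, Q_1) \leq \phi(Q^{k-1}) \log(K \Irr(\Lambda_\omega)) = o(\beta)$. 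The residual exceptional case in which the short lattice vector comes from a rational with $\|\qq\| < Q_0$ forces $\pi(\omega)$ into a vanishingly small neighborhood of a finite list of low-denominator rationals and is handled by a direct argument, replacing $\Lambda_\omega$ by its quotient by the $\Z$-span of that short vector and applying Corollary \ref{corollaryestimates} to the resulting lattice in the orthogonal complement.
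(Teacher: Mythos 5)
Your overall strategy is the same as the paper's: shrink $\psi$ so that any child cube meeting the shrunken target is swallowed by the true one (the paper uses $(1-\epsilon)\psi$ together with Lemma \ref{lemmathickness}; your $\psi_\rho(q)=\max(\psi(q)-\rho q,0)$ is the same device), trim the window $[Q^{k-1},Q^k]$ at both ends so that the discarded portions of $F_\psi$ are $O(M_\psi^{1/2})=o(\beta)$, apply Corollary \ref{corollaryestimates}, and verify its irregularity hypothesis by showing that a short vector of $\Lambda_\omega$ would force $\KK_\omega\subset\Delta_\psi(\rr)$, contradicting $\KK_\omega\cap B_{\psi,Q_0}\neq\emptyset$. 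Two steps, however, do not work as written.

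First, the choice $Q_2=cQ^k$ with $c=(M_\psi/\diamcube^\pdim)^{1/\dimsum}$: a direct computation gives $\rho Q_2/\psi(Q_2)=(M_\psi/\phi(Q_2))^{1/\pdim}\geq 1$, since $\phi(Q_2)\leq\phi(1)=M_\psi$. So far from having $\rho q\leq\psi(q)/C_1$ on $[Q^{k-1},Q_2]$ for a large constant $C_1$, you get $\psi_\rho(Q_2)=0$, and no constant-factor adjustment of $c$ repairs this when $\phi$ decays inside the window. The cutoff must be defined implicitly in terms of $\phi(Q_2)$ itself; this is exactly the paper's equation $\epsilon\Psi(Q_{2,k})=\diamcube/N^{k+1}$, solvable by continuity of $\Psi$, after which the tail bound $F_\psi(Q_{2,k},Q^{k+1})\lessapprox_\epsilon\phi^{1/2}(Q_{2,k})\leq M_\psi^{1/2}$ goes through. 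Second, your ``residual exceptional case'' of a short vector $g_\omega\rr$ with $1\leq\|\qq\|<Q_0$: the proposed fix (quotient $\Lambda_\omega$ by $\Z\rr$ and apply Corollary \ref{corollaryestimates} to the quotient) does not type-check, as that corollary concerns full-rank unimodular lattices in $\R^\Dimsum$ and the Diophantine meaning of $W_{\psi,\Lambda}$ is lost on a quotient. In fact the case is vacuous for large $k$: applying your own Case-1 argument to an integer multiple $N\rr$ with $\|N\qq\|\geq Q_0$ shows $\|g_\omega\rr\|\gtrapprox (Q^{k-1})^{\qdim/\pdim}\psi(NQ_0)/N\to\infty$ as $k\to\infty$ for fixed $\psi$ and $Q_0$, which suffices because the conclusion of the lemma is a liminf in $k$. (A minor further slip: the short-vector bound should read $\minkowski(\Lambda_\omega)\gtrapprox\phi^{1/\pdim}(Q^{k-1})$, not $\phi^{1/\dimsum}$; this is harmless, since any positive power of $\phi$ loses to $\exp(\phi^{-1/2})$.)
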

\begin{proof}
The idea is to choose numbers $Q_{1,k},Q_{2,k} \in [Q^k,Q^{k + 1}]$ such that $F_\psi(Q^k,Q_{1,k})$ and $F_\psi(Q_{2,k},Q^{k + 1})$ are both small in comparison to $F_\psi(Q^k,Q^{k + 1}) \geq \beta$. Then for $\epsilon > 0$ small and $\omega\in T^k$, the measure of $\KK_\omega\cap W_{(1 - \epsilon)\psi}(Q_{1,k},Q_{2,k})$ can be bounded from below by Corollary \ref{corollaryestimates}.\\

\noindent {\bf Choice of $Q_{1,k}$.}
For each $k\in\N$ let $Q_{1,k} = \exp({\phi^{-1/2}(Q^k)}) Q^k$. This choice is made so as to maximize $Q_{1,k}$ while leaving $F_\psi(Q^k,Q_{1,k})$ relatively small:
\begin{align*}
F_\psi(Q^k,Q_{1,k}) &\leq \phi(Q^k) \log(Q_{1,k}/Q^k) \by{\eqref{Fpsibounds}}\\
&= \phi(Q^k) \phi^{-1/2}(Q^k) = \phi^{1/2}(Q^k) \leq M_\psi^{1/2}.
\end{align*}
\noindent {\bf Choice of $Q_{2,k}$.}
Fix $0 < \epsilon \leq \diamcube$ small. Since $\Psi$ is continuous, for all $k$ sufficiently large there exists $Q_{2,k} \in [1,Q^{k + 1}]$ such that
\begin{equation}
\label{Q2def}
\epsilon\Psi(Q_{2,k}) = \frac{\diamcube}{N^{k + 1}} = \diamcube(Q^{k + 1})^{-1/\alpha}.
\end{equation}
This choice is motivated by the implication
\begin{equation}
\label{Q2motivation}
\KK_\omega\cap W_{(1 - \epsilon)\psi}(Q_{1,k},Q_{2,k}) \neq \emptyset \;\;\;\Rightarrow\;\;\; \KK_\omega \subset W_\psi(Q_{1,k},Q_{2,k}),
\end{equation}
which holds for all $\omega\in E^{k + 1}$ (since for such $\omega$, we have $\diam(\KK_\omega) = \diamcube/N^{k + 1}$). The following calculation shows that $F_\psi(Q_{2,k},Q^{k + 1})$ is small in comparison to $\beta$:

\begin{align*}
F_\psi(Q_{2,k},Q^{k + 1})
&\leq \phi(Q_{2,k}) \log(Q^{k + 1}/Q_{2,k}) \by{\eqref{Fpsibounds}}\\
&= \phi(Q_{2,k}) \log\left(\frac{(\diamcube/(\epsilon\Psi(Q_{2,k})))^\alpha}{Q_{2,k}}\right) \by{\eqref{Q2def}}\\
&= \phi(Q_{2,k}) [\alpha\log(\diamcube/\epsilon) + (1/\Dimsum)\log(1/\phi(Q_{2,k}))] \noreason\\
&\lessapprox_{\epsilon} \phi^{1/2}(Q_{2,k}) \since{$\phi(Q_{2,k}) \leq M_\psi \leq 1$}\\
&\leq M_\psi^{1/2}.
\end{align*}
\noindent {\bf Application of Corollary \ref{corollaryestimates}.}
The above calculations show that
\[
M_\psi/\beta, F_\psi(Q^k,Q_{1,k})/\beta, F_\psi(Q_{2,k},Q^{k + 1})/\beta \tendsto{M_\psi^{1/2}/\beta \to 0 \; (\epsilon)} 0.
\]
In particular, if $M_\psi^{1/2}/\beta$ is sufficiently small (depending on $\epsilon$), then we have $Q^k \leq Q_{1,k} \leq Q_{2,k} \leq Q^{k + 1}$ and so we can apply Corollary \ref{corollaryestimates} to get
\begin{equation}
\label{estimatesmod}
(N^k)^\Dimprod \lebesgue_\KK\big(\KK_\omega\cap W_{(1 - \epsilon)\psi}(Q_{1,k},Q_{2,k})\big) \eqsim{\substack{\exp({\phi^{-1/2}(Q^k)})/\Irr(\Lambda_\omega) \to \infty \\ \beta \to 0 \\ M_\psi^{1/2}/\beta \to 0 \; (\epsilon)}} \eta_\greeks (1 - \epsilon) \beta,
\end{equation}
where $\omega\in T^k$, $g_\omega = g^k u_{\pi(\omega)}$, and $\Lambda_\omega = g_\omega \Lambda_*$ are as in Corollary \ref{corollaryestimates}.
\begin{claim}
\label{claimtopconvergence}
The top convergence assumption can be replaced by the convergence assumption $k\to\infty \; (\psi,Q_0)$, since
\begin{equation}
\label{IrrLambdalog}
\exp({\phi^{-1/2}(Q^k)})/\Irr(\Lambda_\omega) \tendsto{\substack{M_\psi\to 0 \\ k\to\infty \; (\psi,Q_0)}} \infty.
\end{equation}
\end{claim}
%We note that the rate of convergence of \eqref{IrrLambdalog} does not depend on $Q_0$, only the largeness of $k$ which satisfy it.
\noindent{\it Proof.}
Fix $\rr\in \Lambda_*$ and let $\w\rr = g_\omega \rr$. We bound $\|\w\rr\|$ from below in four cases:
\begin{enumerate}[1.]
\item Suppose $Q_0 \leq \|\qq\| \leq Q^k$. By assumption $\KK_\omega\nsubset W_{\psi,Q_0}$, so we may choose $A\in \KK_\omega \cap B_{\psi,Q_0}$. Then
\begin{align*}
\|\w\rr\| &\approx \|g^k u_A \rr\| \geq (Q^k)^{\qdim/\pdim} \|A\qq - \pp\| > (Q^k)^{\qdim/\pdim} \psi(\|\qq\|)\\
&\geq (Q^k)^{\qdim/\pdim} \psi(Q^k) = \phi^{1/\pdim}(Q^k).
\end{align*}
\item If $\|\qq\| \geq Q^k$, then $\|\w\rr\| \geq \|\w\qq\| = (Q^k)^{-1}\|\qq\| \geq 1$.
\item Suppose $1 \leq \|\qq\| \leq Q_0$. Applying the argument of Case 1 to $Q_0 \w\rr$ shows that
\[
Q_0 \|\w\rr\| \gtrapprox (Q^k)^{\qdim/\pdim} \psi(Q_0 \|\qq\|) \geq (Q^k)^{\qdim/\pdim} \psi(Q_0^2)
\]
and thus
\[
\|\w\rr\| \gtrapprox (Q^k)^{\qdim/\pdim} Q_0^{-1} \psi(Q_0^2) \tendsto{k\to\infty \; (\psi,Q_0)} \infty.
\]
\item If $\qq = \0$ but $\pp\neq \0$, then $\|\w\rr\| = (Q^k)^{\qdim/\pdim} \|\rr\| \gtrapprox 1$.
\end{enumerate}
So if $k$ is large enough (depending on $\psi$ and $Q_0$), then
\[
\minkowski(\Lambda_\omega) \gtrapprox \phi^{1/\pdim}(Q^k)
\]
and thus
\[
\Irr(\Lambda_\omega) \lessapprox \phi^{-(2\Dimsum - 1)/\pdim}(Q^k) \LessLess{M_\psi \to 0} \exp({\phi^{-1/2}(Q^k)}).\xqedhere{4.34cm}{\triangleleft}\]
\noindent {\bf Completion of the proof of Lemma \ref{lemmaharddirection}.} By \eqref{Q2motivation}, we have
\[
W_{(1 - \epsilon)\psi,\Lambda_*}(Q_{1,k},Q_{2,k}) \subset \bigcup_{\substack{a\in E_{k + 1} \\ \KK_{\omega a} \subset W_\psi(Q_{1,k},Q_{2,k})}} \KK_{\omega a} \subset \bigcup_{\substack{a\in E_{k + 1} \\ \omega a \notin T^{k + 1}}} \KK_{\omega a}
\]
where the second inclusion holds for all $k$ sufficiently large (depending on $\psi$ and $Q_0$). Thus
\begin{eqnarray*}
&&\hspace{-.8 in} \frac{1}{(N_{k + 1})^\Dimprod} \#\{a\in E_{k + 1} : \omega a \notin T^{k + 1}\}\\
&\geqsim{k\to\infty \; (\psi,Q_0)} & (N^k)^\Dimprod \lebesgue_\KK\big(\KK_\omega\cap W_{(1 - \epsilon)\psi}(Q_{1,k},Q_{2,k})\big)\\
&\eqsim{\substack{\beta \to 0 \\ M_\psi^{1/2}/\beta \to 0 \; (\epsilon) \\ k \to \infty \; (\psi,Q_0)}}& \eta_\greeks (1 - \epsilon) \beta \;\;\;\;\;\;\;\;\;\;\;\;\;\;\; \text{(by \eqref{estimatesmod})}\\
&\eqsim{\epsilon\to 0}& \eta_\greeks \beta.
\end{eqnarray*}
Standard quantifier arguments yield
\[
\frac{1}{(N_{k + 1})^\Dimprod}\#\{a\in E_{k + 1} : \KK_{\omega a} \subset W_{\psi,Q_0}\} \geqsim{\substack{M_\psi^{1/2}/\beta\to 0 \\ \beta \to 0 \\ k\to\infty \; (\psi,Q_0)}} \eta_\greeks\beta,
\]
and taking the infimum over all $\omega\in T^k$ and then the liminf as $k\to\infty$ completes the proof of Lemma \ref{lemmaharddirection}.
\end{proof}

For each $0 < \multerror < 1$, by Lemma \ref{lemmaharddirection} there exists $\xi > 0$ (depending only on $\multerror$ and $\constants$) such that
\begin{equation}
\label{deltadef}
\begin{cases}
M_\psi^{1/2}/\beta \leq \xi\\
\beta \leq \xi
\end{cases}
\;\;\Rightarrow\;\;
\begin{cases}
\liminf_{k\to\infty} P_k^- > \multerror \eta_\greeks \beta\\
M_\psi \alpha \log(2) \leq (\multerror^{-1} - 1) \beta
\end{cases}.
\end{equation}
It follows that there exist $\xi_0 > 0$ and a decreasing homeomorphism $h:[0,1] \to [0,\xi_0]$ (in particular with $h(1) = 0$) such that for all $0 < \multerror < 1$, \eqref{deltadef} holds with $\xi = h(\multerror)$. Then the function $c_\greeks$ in the statement of Theorem \ref{maintheoremv2} can be defined by the formula
\[
c_\greeks(M) = \begin{cases}
(h^{-1}(M^{1/4}))^2 \eta_\greeks & \text{if } M \leq \xi_0^4 \\
0 & \text{if } M \geq \xi_0^4
\end{cases}.
\]
Suppose that $L_{f,\psi} < c_\greeks(M_\psi)$ (in particular this implies $c_\greeks(M_\psi) > 0$ and thus $M_\psi < \xi_0^4$), and let $\beta \df \xi \df M_\psi^{1/4}$ and $\multerror \df h^{-1}(\xi)$, so that $c_\greeks(M_\psi) = \multerror^2 \eta_\greeks$. Since $M_\psi^{1/2}/\beta = \beta = \xi$, by \eqref{deltadef} we have $\liminf_{k\to\infty} P_k^- > \multerror \eta_\greeks \beta$, say $P_k^- \geq \multerror \eta_\greeks \beta$ for all $k\geq k_0 = k_0(\psi,\beta,Q_0,\multerror)$. Then the function $f_-$ from  \6\ref{subsectionevaporationrates} satisfies
\[
f_-(\rho) \geq \left(\prod_{j = k_0}^{k(\rho)} \frac{1}{1 - \multerror \eta_\greeks \beta}\right)\rho^\Dimprod \gtrapprox_{\psi,\beta,Q_0,\multerror} \exp({\multerror \eta_\greeks \beta k(\rho)}) \rho^\Dimprod.
\]
We can relate $\beta k(\rho)$ to $\rho$ via the definition of the sequence $(N_k)_1^\infty$:
\begin{align*}
F_\psi(\rho^{-\alpha}) &\leq F_\psi\big((N^{k(\rho) + 1})^\alpha\big) = F_\psi(Q^{k(\rho) + 1}) \noreason\\
&= \sum_{k = 0}^{k(\rho)} F_\psi(Q^k,Q^{k + 1})\\
&\leq \sum_{k = 0}^{k(\rho)} [\beta + M_\psi \alpha \log(2)] \by{\eqref{Nkdef}}\\
&\leq (k(\rho) + 1) \multerror^{-1} \beta \by{\eqref{deltadef}}\\
&\approx_{\plus,\multerror,\beta} \multerror^{-1} \beta k(\rho).\noreason
\end{align*}
Combining gives
\[
f_-(\rho) \gtrapprox_{\psi,\beta,Q_0,\multerror} \exp({\multerror^2 \eta_\greeks F_\psi(\rho^{-\alpha})}) \rho^\Dimprod
\]
and thus by Proposition \ref{propositionevaporating}(ii),
\[
\HH^f(B_{\psi,Q_0}) \lessapprox_{\psi,\beta,Q_0,\multerror} \liminf_{\rho\to 0} \frac{\rho^{-\Dimprod} f(\rho)}{\exp({\multerror^2 \eta_\greeks F_\psi(\rho^{-\alpha})})} = \liminf_{\rho \to 0} \frac{\exp\log\left(\frac{f(\rho)}{f_\ast(\rho)}\right)}{\exp(\multerror^2 \eta_\greeks F_\psi(
\rho^{-\alpha}))}\cdot
\]
Since $L_{f,\psi} < c_\greeks(M_\psi) = \multerror^2 \eta_\greeks$, the right-hand side is equal to zero. Since $Q_0$ was arbitrary, we have $\HH^f(\Bad\psi) = 0$, which completes the proof.

\appendix
\draftnewpage
\section{Index of Notations}
\label{sectionnotation}

\subsection{Norms}
Throughout this paper, $\firstdim,\seconddim\in\N$ are fixed, as are norms $\|\cdot\|_\firstgreek$ and $\|\cdot\|_\secondgreek$ on $\R^\firstdim$ and $\R^\seconddim$, respectively. Whenever $\|\cdot\|$ is used without a subscript, it may indicate one of the norms $\|\cdot\|_\firstgreek$ or $\|\cdot\|_\secondgreek$, the norm
\[
\|\rr\| = \|\pp\|_\pgreek\vee\|\qq\|_\qgreek \;\;\; (\rr = (\pp,\qq)\in \R^\dimsum),
\]
or the norm
\[
\|A\| = \max_{\|\qq\|_\qgreek = 1} \|A\qq\|_\pgreek \;\;\; (A\in\MM),
\]
depending on context. If $S$ is a set, then $\|S\| = \max_{\xx\in S} \|\xx\|$, unless $S$ is a lattice in which case $\|S\| = \max_{\xx\in\R^\generic} \dist(\xx,S)$.% In \6\ref{subsectionestimates}, we make the assumption that $\|\KK\| \leq 1/2$, which implies that $\|\rr\| = \|\qq\|$ for all $\rr = (\pp,\qq)\in\R^\dimsum$ such that $\KK\cap\Delta_\psi(\rr)\neq\emptyset$ (Lemma \ref{lemmaqr}).
\subsection{Shorthand}
The following mathematical objects depend only on $\constants$:
\begin{align*}
V_\pgreek &= \text{volume of $B_\pgreek(\0,1) = \{\pp\in\R^\pdim : \|\pp\|_\pgreek \leq 1\}$} \noreason\\
\Dimsum &= \dimsum&
\Dimprod &= \dimprod\\
\delta &= \Dimprod/\Dimsum&
\alpha &= \pdim/\Dimsum\\
\eta = \eta_\greeks &= \qdim\frac{\Vgreeks}{2\zeta(\Dimsum)}&% \underset{\qdim = 1}{=} \frac{V_\pdim}{\zeta(\pdim + 1)}\noreason\\
\theta = \theta_\greeks &= \frac{\dimprod}{\dimsum}\cdot\frac{\Vgreeks}{2\zeta(\Dimsum)}\\% \underset{\qdim = 1}{=} \frac{V_\pdim}{2\zeta(\pdim + 1)}\noreason
\psi_\ast(q) &= q^{-\qdim/\pdim}&
f_\ast(\rho) &= \rho^\dimprod\\
\Lambda_\ast &= \Z^\dimsum.
\end{align*}
The following objects also depend on $\psi$ and/or the sequence $(N_k)_1^\infty$ and/or the dimension function $f$:
\begin{align*}
\phi(q) &= (\psi(q)/\psi_\ast(q))^\pdim = q^\qdim \psi^\pdim(q),&
\Psi(q) &= \psi(q)/q\\
F_\psi(Q_1,Q_2) &= \int_{Q_1}^{Q_2} q^{\qdim - 1} \psi^\pdim(q) \;\dee q = \int_{Q_1}^{Q_2} \phi(q) \frac{\dee q}{q}&
\w\psi(q) &= \psi(Q_1\vee q)\\
\wbar F_\psi(Q_1,Q_2) &= F_\psi(Q_1,Q_2) + \phi(Q_1)&
M_\psi &= \psi^\pdim(1) = \phi(1)\\
\Delta_\psi(\rr) &= \{A\in\MM : \|A\qq - \pp\| \leq \psi(\|\qq\|)\}\noreason\\
W_{\psi,\Lambda}(Q_1,Q_2) &= \bigcup_{\substack{\rr \in \Lambda \\ Q_1 \leq \|\qq\| \leq Q_2}} \Delta_\psi(\rr)&
W_\psi(Q_1,Q_2) &= W_{\psi,\Lambda_\ast}(Q_1,Q_2)\\
W_{\psi,Q_0} &= \bigcup_{Q\geq Q_0} W_\psi(Q_0,Q)&
\Approx\psi &= \bigcap_{Q_0\in\N} W_{\psi,Q_0}\\
%\end{align*}
%\begin{align*}
N^k &= \prod_{j = 1}^k N_j&
Q^k &= (N^k)^\alpha\\
t_k &= \delta\log(N^k) = \qdim\log(Q^k)&
g^k &= g_{t_k}\\
L_{f,\psi} &= \liminf_{\rho\to 0} \frac{\log\left(\frac{f(\rho)}{f_\ast(\rho)}\right)}{F_\psi(\rho^{-\alpha})}
\end{align*}

\begin{remark*}
In the special case $\psi(q) = \kappa^{1/\pdim} \psi_\ast(q)$, we get:
\begin{align*}
\phi(q) &= \kappa&
\Psi(q) &= \kappa^{1/\pdim} q^{-1/\alpha}\\
N_k &= N_* \df 2^{\lceil \beta/(\kappa\alpha\log(2))\rceil}&
N^k &= N_*^k&
Q^k &= N_*^{k\alpha}\\
M_\psi &= \phi(1) = \kappa&
L_{f,\psi} &= \frac{\dimprod - s}{\alpha\kappa}&
F_\psi(Q_1,Q_2) &= \kappa\log(Q_2/Q_1)
\end{align*}
\end{remark*}

\subsection{Miscellaneous notation organized by section of introduction}

\begin{itemize}
\item \6\ref{sectionintroduction}: $\MM$, $c_\greeks$, $C_\greeks$
\item \6\ref{sectionHD}: $\HH^f$, $\underline\BB^f$, $\boxc_\rho(S)$, $\ball x\rho$
\item \6\ref{sectionheuristic}: $\thickvar S\rho$
\item \6\ref{sectionpreliminaries}: $\approx$, $\lessapprox$, $\sim$, $\lesssim$, $\lessless$
\item \6\ref{subsectioncantorseries}: $\KK$, $\pi$, $\KK_\omega$, $\Phi_\omega$, $E_k$, $E^k$
\item \6\ref{subsectionevaporationrates}: $T_\omega$
\item \6\ref{subsectioncorrespondence}: $\Omega_\Dimsum$, $g_t$, $u_A$
\item \6\ref{sectionestimates}: $\Irr(\Lambda)$
\item \6\ref{subsectionirregularity}: $\Irr(\rr)$
\end{itemize}

\bibliographystyle{amsplain}

\bibliography{bibliography}

\end{document}